    \let\usingAmsArtXII\usepackage	
  \def \useHugeSize {}
  \def \numberingIsThrough {}
    \def\mathbb{\Bbb}
    \def\mathfrak{\frak}
    \def\mathbf{\bold}
      \def\boldsymbol#1{{\bold #1}}
    \def\mathbit{\boldsymbol}
    \newenvironment{proof}{%
         \@ifnextchar[{%
                       \expandafter\let\expandafter\end@proof
                         \csname endpf*\endcsname
                         \my@proof
                      }{\let\end@proof\endpf\pf}%
        }{\end@proof}
    \def\my@proof[#1]{\@nameuse{pf*}{#1}}
    \def\xrightarrow[#1]#2{@>{#2}>{#1}>}
    \def\xleftarrow[#1]#2{@<{#2}<{#1}<}
    \def\providecommand#1{\def#1}
    \def\emph#1{{\em #1}}
    \def\textbf#1{{\bf #1}}
    \def\mathring{\overset{\,\,{}_\circ}}
	\let\usingAmsArtXII\usepackage
        \DeclareMathAccent{\mathring}{\mathalpha}{operators}{"17}
      \long\def\FAKEendPROOF{\endtrivlist}
	  \def\endproof{\qed\endtrivlist}
        \DeclareMathAlphabet{\mathbit}{OML}{cmm}{b}{it}
      \def\Sb#1\endSb{_{\substack{#1}}}
      \def\Sp#1\endSp{^{\substack{#1}}}
                \def\mathcal{\cal}
                \def\pcyr{%
                        \def\default@family{UWCyr}%
                        \let\oldSl@\sl
                        \def\sl{\def\default@shape{it}\oldSl@}%
                        \cyracc
                        \language\Russian\family{UWCyr}\selectfont
                }
                \DeclareFontFamily{OT2}{cmr}{\hyphenchar\font45 }
                \DeclareFontShape{OT2}{cmr}{m}{n}{%
                     <5><6><7><8><9><10>gen*wncyr %
                     <10.95><12><14.4><17.28><20.74><24.88> wncyr10 %
                }{}
                \DeclareFontShape{OT2}{cmr}{m}{it}{%
                     <5><6><7><8><9><10> gen * wncyi%
                     <10.95><12><14.4><17.28><20.74><24.88> wncyi10%
                }{}
                \DeclareFontShape{OT2}{cmr}{bx}{n}{%
                     <5><6><7><8><9><10> gen * wncyb%
                     <10.95><12><14.4><17.28><20.74><24.88> wncyb10%
                }{}
                \DeclareFontShape{OT2}{cmr}{m}{sl}{%
                     <-> ssub * cmr/m/it%
                }{}
                \DeclareFontShape{OT2}{cmr}{m}{sc}{%
                     <5><6><7><8><9><10>%
                     <10.95><12><14.4><17.28><20.74><24.88> wncysc10%
                }{}
                \DeclareFontFamily{OT2}{cmss}{\hyphenchar\font45 }
                \DeclareFontShape{OT2}{cmss}{m}{n}{%
                     <8><9><10> gen * wncyss%
                     <10.95><12><14.4><17.28><20.74><24.88> wncyss10%
                }{}
                \def\cyrencodingdefault{OT2}
                \def\pcyr{%
                        \cyracc
                        \let\encodingdefault\cyrencodingdefault
                        \language\Russian\fontencoding{OT2}\selectfont
                }
        \def\theorembodyfont#1{\relax}
          \let\@@th@plain\th@plain
          \def\th@plain{ \@@th@plain \slshape }
        \let\normalshape\relax
     \def\cprime{$'$}
  \def\@sect@my#1#2#3#4#5#6[#7]#8{%
\ifnum #2>\c@secnumdepth
   \let\@svsec\@empty
 \else
   \refstepcounter{#1}%
\edef\@svsec{\ifnum#2<\@m
             \@ifundefined{#1name}{}{\csname #1name\endcsname\ }\fi
\noexpand\rom{\csname the#1\endcsname.}\enspace}\fi
 \@tempskipa #5\relax
 \ifdim \@tempskipa>\z@ 
   \begingroup #6\relax
   \@hangfrom{\hskip #3\relax\@svsec}{\interlinepenalty\@M #8\par}%
   \endgroup
   \if@article\else\csname #1mark\endcsname{%
        \ifnum \c@secnumdepth >#2\relax\csname the#1\endcsname. \fi#7}\fi
\ifnum#2>\@m \else
       \let\@tempf\\ \def\\{\protect\\}\addcontentsline{toc}{#1}%
{\ifnum #2>\c@secnumdepth \else
             \protect\numberline{%
               \ifnum#2<\@m
               \@ifundefined{#1name}{}{\csname #1name\endcsname\ }\fi
               \csname the#1\endcsname.}\fi
           #8}\let\\\@tempf
     \fi
 \else
  \def\@svsechd{#6\hskip #3\@svsec
    \@ifnotempty{#8}{\ignorespaces#8\unskip
       \ifnum\spacefactor<1001.\fi}%
        \ifnum#2>\@m \else
          \let\@tempf\\ \def\\{\protect\\}\addcontentsline{toc}{#1}%
            {\ifnum #2>\c@secnumdepth \else
              \protect\numberline{%
                \ifnum#2<\@m
                \@ifundefined{#1name}{}{\csname #1name\endcsname\ }\fi
                \csname the#1\endcsname.}\fi
             #8}\let\\\@tempf\fi}%
 \fi
\@xsect{#5}}
  \let\@sect\@sect@my             
  \def\th@remark@my{\theorempreskipamount6\p@\@plus6\p@
    \theorempostskipamount\theorempreskipamount
    \def\theorem@headerfont{\it}\normalshape}
    \let\th@remark\th@remark@my
    \let\o@@remark\th@remark
      \def\th@remark{\o@@remark
	\ifdim\theorempostskipamount < 2pt\relax
	  \theorempostskipamount\theorempreskipamount
	     \multiply\theorempostskipamount\tw@
	     \divide\theorempostskipamount\thr@@
	\fi
      }
\let\myLabel\@gobble
\def\labelsONmargin{\@mparswitchfalse\def\myLabel##1{\@bsphack\marginpar
                                  {\normalshape\tiny\rm Label ##1}\@esphack}}
  \def\url#1{{\tt #1}}%
\def\PREpmodSKIP{\allowbreak  \if@display\mkern18mu\else\mkern8mu\fi}
\def\cyracc{\def\u##1{
                \if \i##1\char"1A%
                \else \if I##1\char"12%
                \else \accent"24 ##1\fi\fi }%
\def\"##1{\if e##1{\char"1B}%
                \else \if E##1{\char"13}%
                \else \accent"7F ##1\fi\fi }%
\def\9##1{\if##1z\char"19 
\else\if##1Z\char"11 
\else\if##1E\char"03 
\else\if##1e\char"0B 
\else\if##1u\char"18 
\else\if##1U\char"10 
\else\if##1A\char"17 
\else\if##1a\char"1F 
\else\if##1p\char"7E 
\else\if##1P\char"5E 
\else\if##1Q\char"5F 
\else\if##1q\char"7F 
\else\if##1i\char"1A 
\else\if##1I\char"12 
\else\if##1N\char"7D 
\fi
\fi
\fi
\fi
\fi
\fi
\fi
\fi
\fi
\fi
\fi
\fi
\fi
\fi
\fi
}%
\def\cydot{{\kern0pt}}}%
\def\cydot{$\cdot$}
        \def\Russian{0\relax
    \message{Don't know the hyphenation rules for Russian^^J
                        Please do INITeX with `input  russhyph' in the 
                        command line}%
                \gdef\Russian{0\relax}%
        }
  \def\@putname#1#2#3#4{\def\@@ref{#3}\let\old@bf\bf
        \def\bf##1{\old@bf\if?\noexpand##1?{#4}\else##1\fi}%
	#1{#2}%
        \let\bf\old@bf}
  \def\@putname#1#2#3#4{\def\@@ref{#3}\let\old@bf\bf	
	\let\old@reset@font\reset@font			
        \def\bf##1{\old@bf\if?\noexpand##1?{#4}\else##1\fi}%
	\def\reset@font##1##2{\old@reset@font##1\if?\noexpand##2?{#4}\else##2\fi}#1{#2}%
        \let\bf\old@bf\let\reset@font\old@reset@font}
\let\my@ref=\ref
\def\ref#1{\@putname\my@ref{#1}{#1}{\tiny\rm\@@ref}}
\let\my@pageref=\pageref
\def\pageref#1{\@putname\my@pageref{#1}{#1}{\tiny\rm\@@ref}}
\let\my@cite=\cite
\def\cite#1{\@putname\my@cite{#1}{\@citeb}{\tiny\rm\@@ref}}
  \theoremstyle{plain} 
\address \undefined
\institute \undefined \else	
     \def\address{\institute}
\email \undefined
        \let\email\texttt
\let\emphOrig\emph
  \def\eatToBar#1|{}
  \def\emphToIndexSLASH#1\/{\index{#1}\eatToBar}
  \def\emphToIndexDOTSLASH#1.\/{\emphToIndexSLASH #1\/}
  \def\emphAndIndex#1{\emphOrig{#1}{\emphToIndexDOTSLASH #1.\/|}}
  \let\emph\emphAndIndex
\numberwithin{equation}{section}
\theoremstyle{definition}
\newtheorem{definition}{Definition}[section]
\newtheorem{definition}{Definition}
\newtheorem{example}[definition]{Example}
\theoremstyle{remark}
\newtheorem{remark}[definition]{Remark} 
\newtheorem{note}{Note}[section] 
\newtheorem{summary}{Summary}[section] 
\theoremstyle{plain} 
\newtheorem{theorem}[definition]{Theorem}
\newtheorem{lemma}[definition]{Lemma}
\newtheorem{corollary}[definition]{Corollary}
\newtheorem{proposition}[definition]{Proposition}
\def\supp{\operatorname {supp}}
\def\Hom{\operatorname{Hom}}
\def\ad{\operatorname{ad}}
\def\b{\beta}
\def\C{{\mathbb C}}
\def\g{{\mathfrak {g}}}
\def\b{{\mathfrak {b}}}
\def\q{{\mathfrak {q}}}
\def\k{{\mathfrak {k}}}
\def\l{{\mathfrak {l}}}
\def\m{{\mathfrak {m}}}
\def\n{{\mathfrak {n}}}
\def\p{{\mathfrak {p}}}
\def\h{{\mathfrak {h}}}
\begin{document}
\title[Simple weight modules over polynomial vector fields]{Simple weight modules with finite weight multiplicities over the Lie algebra of polynomial vector fields}

\author{Dimitar Grantcharov} 
\address{Department of Mathematics \\ University of Texas at Arlington \\ Arlington, TX 76019} \email{grandim@uta.edu}
\thanks{The first author is partially supported by Simons Collaboration Grant 358245}
\author{Vera Serganova} 
\address{Department of Mathematics \\ University of California at Berkeley \\ Berkeley, CA 94720} \email{serganov@math.berkeley.edu}
\thanks{The second author is partially supported by NSF Grant 1701532}
\date{ \today }

\maketitle
\begin{abstract} Let ${\mathcal W}_n$ be the Lie algebra of polynomial vector fields.
  We classify simple weight ${\mathcal W}_n$-modules $M$ with finite weight multiplicities. We prove that every such nontrivial module $M$ is either a tensor module or the unique simple submodule in a tensor module associated
  with the de Rham complex on $\mathbb C^n$.\\
  
  \noindent 2020 MSC: 17B66, 17B10\\

\noindent Keywords and phrases: Lie algebra, Cartan type, weight module, localization.

  \end{abstract}
\section{Introduction}

Lie algebras of vector fields have been studied since the fundamental works of S. Lie and E. Cartan in the late 19th century and the early 20th century. A classical example of such Lie algebra is  the Lie algebra ${\mathcal W}_n$ consisting of the derivations of the polynomial algebra $\C[x_1,...,x_n]$, or, equivalently, the Lie algebra of polynomial vector fields on $\C^n$. The first classification results concerning representations of ${\mathcal W}_n$ and other Cartan type Lie algebras were  obtained by A. Rudakov in 1974-1975, \cite{Rud1}, \cite{Rud2}.  These results address the classification of a class of irreducible ${\mathcal W}_n$-representations that satisfy some natural topological conditions. The modules of Rudakov are a particular class of the so-called {\it tensor modules}.

General tensor modules $T(P,V)$ are introduced by Shen and Larson, \cite{Sh}, \cite{L}, and are defined for a $\mathcal D_n$-module $P$ and $\mathfrak{gl} (n)$-module $V$, where $\mathcal D_n$ is the algebra of polynomial differential operators on $\mathbb C^n$ (see \S 2.8 for details). The modules $T(P,V)$ have nice geometric interpretations. If $V$ is finite dimensional, then we have a natural map from ${\mathcal W}_n$ to the algebra of differential
operators in the section of a trivial vector bundle on $\C^n$ with fiber $V$. This map is a specialization of a Lie algebra homomorphism ${\mathcal W}_n\to \mathcal D_n\otimes U(\mathfrak{gl} (n))$. The tensor module
$T(P,V)$ is nothing but the pull back of the $\mathcal D_n\otimes U(\mathfrak{gl} (n))$-module $P\otimes V$.

Tensor  ${\mathcal W}_1$-modules and their extensions  were studied extensively in the 1970's and in the 1980's by B. Feigin, D. Fuks, I. Gelfand, and others, see for example, \cite{FF}, \cite{Fuk}.  Important results on general tensor modules $T(P,V)$ have been  recently established  by G. Liu, R. Lu, Y. Xue, K. Zhao, and others, see \cite{XL} and the references therein. 

In this paper we focus on the category of weight representations of ${\mathcal W}_n$, namely those that decompose as direct sums of weight spaces relative to the subalgebra $\mathfrak{h}$  of ${\mathcal W}_n$ spanned by the derivations $x_1\partial_1$,...,$x_n\partial_n$.   The study of weight representations of Lie algebras of vector fields is a subject of interest by both mathematicians and theoretical physicists in the last 30 years.  Two particular cases in this study have attracted special attention - the cases of $\mathcal W_n$ and of the Witt algebra ${\rm Witt}_n$. Recall that ${\rm Witt}_n$ is the Lie algebra of the derivations of the Laurent polynomial algebra $\C[x_1^{\pm},...,x_n^{\pm 1}]$, or, equivalently, the Lie algebra of polynomial vector fields on the $n$-dimensional complex torus. In particular, $\rm{Witt}_1$ is the centerless Virasoro algebra. The classification of all simple weight representations with finite weight multiplicities of  ${\mathcal W}_1$ and ${\rm Witt}_1$  (and hence of the Virasoro algebra) was obtained by O. Mathieu in 1992, \cite{M-Vir}. Following a sequence of works of S. Berman, Y. Billig,  C. Conley, X. Guo, C. Martin, O. Mathieu, V. Mazorchuk, V. Kac, G. Liu, R. Lu, A. Piard, S. Eswara Rao, Y. Su, K. Zhao,  recently, Y. Billig and V. Futorny managed to extend Mathieu's classification result to ${\rm Witt}_n$ for arbitrary $n\geq 1$ (see \cite{BF} and the references therein).\footnote{Note that the Witt algebra  ${\rm Witt}_n$ is denoted by $\mathcal W_n$ in \cite{BF}.} 

The classification of simple bounded (i.e. with a bounded set of weight multiplicities) modules of ${\mathcal W}_n$ was completed in \cite{XL}. The result in \cite{XL} states that every simple bounded module is a tensor module
$T(P,V)$  or a submodule of a tensor module. In order $T(P,V)$ to be bounded, $P$ must be a weight ${\mathcal D}_n$-module and $V$ must be a finite-dimensional module.

In this paper we classify all simple weight ${\mathcal W}_n$-modules $M$ with finite weight multiplicities. The main result is surprisingly easy to formulate - every such nontrivial module $M$ is either a tensor module $T(P,V)$ or the unique simple submodule of $T(P,\bigwedge\nolimits^k {\mathbb C}^n)$ for $k=1,...,n$. The necessary and sufficient condition for $P$ and $V$ so that  $T(P,V)$ has finite weight multiplicities is given in Theorem \ref{finmult}. This condition is expressed in terms of the subsets of roots ${\mathcal W}_n$ and $\mathfrak{gl} (n)$ that act locally finitely or injectively on $P$ and $V$, respectively. For our classification result, we first use a theorem of \cite{PS} stating that $M$ is parabolically induced from a bounded simple module $N$ over a subalgebra $\mathfrak g = {\mathcal W}_m\ltimes(\k\otimes \mathcal O_m)$ of ${\mathcal W}_n$. This subalgebra $\mathfrak g$ plays the role of a Levi subalgebra of a parabolic subalgebra of ${\mathcal W}_n$. The classification of simple bounded $\mathfrak g$-modules is one of the most difficult parts of the proof. By introducing the so called $(\g,\mathcal O_m)$-modules, we prove that $N$ is either the unique submodule of a tensor module, or it is a special generalized tensor module $\mathcal F (T(P,V), S)$, see Theorem \ref{mainparabolic}. The essential tool for proving this theorem is the twisted localization functor intrduced in \cite{M}. For the main theorem we show that the parabolic induction functor maps  $\mathcal F (T(P,V), S)$ to a tensor module.

The content of the paper is as follows. In Section 2 we collect some important definitions and preliminary results on weight modules, twisted localization, parabolic induction, and tensor modules. In Section 3 we prove  the necessary and sufficient condition for the tensor module $T(P,V)$ to be a weight module with finite weight multiplicities. We also show that $T(P,V)$ has a unique simple submodule and explain how the restricted duality functor acts on the tensor modules. The main theorem of this paper is also stated in Section 3. Section 4 is devoted to a few results concerning the parabolic induction theorem. The study of bounded $\g$-modules and the classification of all possible $\g$-modules $N$ that appear in the parabolic induction theorem are included in Section 5. In Section 6 we complete the proof of the main theorem by showing that the application of the parabolic induction functor on all possible $N$ described in the previous section leads to  modules $M$ that are either tensor modules or the unique simple submodules of $T(P,\bigwedge\nolimits^k {\mathbb C}^n)$ for $k=1,...,n$. 

\section{Preliminaries}
\subsection{Notation and convention}

Throughout the paper the ground field is $\mathbb C$. All vector spaces, algebras, and tensor products are assumed to be over $\mathbb C$ unless otherwise stated.

\subsection{Weight modules in general setting}\label{subsec-weight}

Let $\mathcal U$ be an associative unital algebra and $\mathcal H\subset\mathcal U$
be a commutative subalgebra. We assume in addition that 
$\mathcal H$  
is a polynomial algebra identified with the symmetric algebra of a vector space ${\mathfrak h}$, and that we
have a decomposition
$$\mathcal U=\bigoplus_{\mu\in {{\mathfrak h}^*}}\mathcal U^\mu,$$
where
$$\mathcal U^\mu=\{x\in\mathcal U | [h,x]=\mu(h)x, \forall h\in\mathfrak h\}.$$
Let $Q_{\mathcal U} = {\mathbb Z}\Delta_{\mathcal U} = \Delta_{\mathcal U} \cup (-\Delta_{\mathcal U} ) $ be the ${\mathbb Z}$-lattice in  ${\mathfrak h}^*$
generated by $\Delta_{\mathcal U}= \{ \mu \in {\mathfrak h}^* \; | \; {\mathcal U}^{\mu} \neq 0\}$. We  obviously have
${\mathcal U}^\mu {\mathcal U}^\nu\subset {\mathcal U}^{\mu+\nu}$. 

We call a ${\mathcal U}$-module $M$ {\it a weight module}, or a \emph{$({\mathcal U}, {\mathcal H})$-module},  if  $M = \bigoplus_{\lambda \in {\mathfrak h}^*} M^{\lambda}$, where  
$$
M^{\lambda} = \{m\in M \; | \; hm=\lambda(h)m\,\text{ for all } h \in {\mathfrak h}\}.
$$
We call $M^{\lambda}$ the  weight space of $M$, $\dim M^{\lambda}$ the \emph{$\lambda$-weight multiplicity} of $M$, and $\supp M = \{\lambda \in \h^* \; | \; M^{\lambda} \neq 0\}$ the \emph{support} of $M$. Note that 
\begin{equation*}
\mathcal U^\mu M^{\lambda}\subset M^{\mu+\lambda}.
\end{equation*}
for every weight module $M$.

We will call a weight $\mathcal U$-module \emph{bounded} if its set of weight multiplicities  is a bounded set. For a bounded $\mathcal U$-module $M$, the degree $d(M)$ is the maximal weight multiplicity of $M$. A weight  $\mathcal U$-module $M$ with finite weight multiplicities is \emph{cuspidal}
if all nonzero elements of ${\mathcal U}^{\mu}$ act injectively on $M$. If $\Delta_{\mathcal U} = - \Delta_{\mathcal U}$, then every cuspidal $\mathcal U$-module is bounded. We use this notion in the case when $\mathcal U$ is the
Weyl algebra or the universal enveloping algebra of a reductive Lie algebra where the latter property holds. 

In the particular case when 
${\mathcal U} = U(\g)$ for a Lie algebra $\mathfrak g$ and $\mathcal H =
S({\mathfrak h})$ for a Cartan subalgebra $\mathfrak h$ of $\mathfrak g$, we have that a  weight  ${\mathcal U}$-module is a  weight $\g$-module.

\subsection{Twisted localization in general setting} \label{subsec-tw-loc} We retain the notation of the previous subsection. 

Let  $a$ be an ad-nilpotent element of $\mathcal U$. Then the set $\langle a \rangle = \{ a^n \; | \; n \geq 0\}$ is an Ore subset of $\mathcal U$  which allows us to define the  $\langle a \rangle$-localization $D_{\langle a \rangle} \mathcal U$ of $\mathcal U$. For a $\mathcal U$-module $M$  by $D_{\langle a \rangle} M = D_{\langle a \rangle} {\mathcal U} \otimes_{\mathcal U} M$ we denote the $\langle a \rangle$-localization of $M$. Note that if $a$ is injective on $M$, then $M$ is isomorphic to a submodule of $D_{\langle a \rangle} M$. In the latter case we will identify $M$ with that submodule.

We next recall the definition of the generalized conjugation of $D_{\langle a \rangle} \mathcal U$ relative to $x \in {\mathbb C}$. This is the automorphism  $\phi_x : D_{\langle a \rangle} \mathcal U \to D_{\langle a \rangle} \mathcal U$ defined by the formula
$$\phi_x(u) = \sum_{i\geq 0} \binom{x}{i} \ad (a)^i (u) a^{-i}.$$ 
If $x \in \mathbb Z$, then $\phi_x(u) = a^xua^{-x}$. With the aid of $\phi_x$ we define the twisted module $\Phi_x(M) = M^{\phi_x}$ of any  $D_{\langle a \rangle} \mathcal U$-module $M$. Finally, we set $D_{\langle a \rangle}^x M = \Phi_x D_{\langle a \rangle} M$ for any $\mathcal U$-module $M$ and call it the \emph{twisted localization} of $M$ relative to $a$ and $x$. We will use the notation $a^x\cdot m$  (or simply $a^x m$) for the element in  $D_{\langle a \rangle}^x M$ corresponding to $m \in D_{\langle a \rangle} M$. In particular, the following formula holds in $D_{\langle a \rangle}^{x} M$:
$$
u (a^{x} m) = a^{x} \left( \sum_{i\geq 0} \binom{-x}{i} \ad (a)^i (u) a^{-i}m\right)
$$
for $u \in \mathcal U$, $m \in D_{\langle a \rangle}  M$.

If $a_1,...,a_k$ are commuting ad-nilpotent elements  in $\mathcal U$ and ${\bf c} = (c_1,...,c_k)$ is in $\C^k$, then we set $ D_{\langle a_1,...,a_k \rangle} M = \prod_{i=1}^k D_{\langle a_i \rangle}M$ and  $ D_{\langle a_1,...,a_k \rangle}^{\bf c} M = \prod_{i=1}^k D_{\langle a_i \rangle}^{c_i} M$. Note that the products  $ \prod_{i=1}^k D_{\langle a_i \rangle}$ and $ \prod_{i=1}^k D_{\langle a_i \rangle}^{c_i}$ are well defined because the functors involved pairwise commute.

If $a\in\mathcal U$ is an ad-nilpotent weight element and $M$ is a weight module then $D_{\langle a \rangle}^x M$ is again a weight module.

\begin{lemma} \label{lem-tw-loc-simple}
Let $a \in {\mathcal U}$ be an $\ad$-nilpotent weight element in ${\mathcal U}$, $M$ be a simple $a$-injective weight ${\mathcal U}$-module, and  $z \in \C$. If $N$ is a simple nontrivial submodule of $D^z_{\langle a \rangle}M$, then $D_{\langle a \rangle}M \simeq D^{-z}_{\langle a \rangle}N$. In particular, if $a$ acts bijectively on $M$, $M  \simeq D^{-z}_{\langle a \rangle}N$.
\end{lemma}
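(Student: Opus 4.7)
The plan is to pass to the localized algebra $D_{\langle a \rangle}\mathcal{U}$ and exploit the fact that the twist $\Phi_z$ is an equivalence of the category of $D_{\langle a \rangle}\mathcal{U}$-modules. The key preliminary step I would establish first is that $D_{\langle a \rangle} M$ is \emph{simple} as a $D_{\langle a \rangle}\mathcal{U}$-module. The standard localization argument goes as follows: for any nonzero $D_{\langle a \rangle}\mathcal{U}$-submodule $K$ and any $0\neq x\in K$, writing $x=a^{-k}m$ with $m\in M$ gives $m=a^k x\in K$, so $K\cap M$ is a nonzero $\mathcal{U}$-submodule of $M$ and hence equals $M$ by simplicity; closure of $K$ under $a^{-1}$ then forces $K=D_{\langle a\rangle}M$.

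Next, because $\phi_z$ is an algebra automorphism of $D_{\langle a\rangle}\mathcal{U}$, twisting by $\Phi_z$ carries simple $D_{\langle a\rangle}\mathcal{U}$-modules to simple ones, so $D^z_{\langle a\rangle}M = \Phi_z D_{\langle a\rangle}M$ is also simple over $D_{\langle a\rangle}\mathcal{U}$. Observe that $a$ acts bijectively on $D^z_{\langle a\rangle}M$: indeed $\phi_z(a)=a$ since only the $i=0$ term in the defining sum survives, so the $a$-action coincides with that on $D_{\langle a\rangle}M$. In particular $a$ is injective on the $\mathcal{U}$-submodule $N$, and the canonical map $D_{\langle a\rangle}N\to D^z_{\langle a\rangle}M$ extending $N\hookrightarrow D^z_{\langle a\rangle}M$ is an injection of $D_{\langle a\rangle}\mathcal{U}$-modules. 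Its image is a nonzero $D_{\langle a\rangle}\mathcal{U}$-submodule of the simple module $D^z_{\langle a\rangle}M$, hence all of it: $D_{\langle a\rangle}N = D^z_{\langle a\rangle}M$.

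Applying $\Phi_{-z}$ to both sides and using $\Phi_{-z}\Phi_z=\mathrm{Id}$ yields
$$D^{-z}_{\langle a\rangle}N = \Phi_{-z}D^z_{\langle a\rangle}M = D_{\langle a\rangle}M,$$
which is the asserted isomorphism. The final sentence of the lemma is then immediate: if $a$ acts bijectively on $M$, the canonical embedding $M\hookrightarrow D_{\langle a\rangle}M$ is an isomorphism, so $M\simeq D^{-z}_{\langle a\rangle}N$.

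The only real content here is the simplicity of $D_{\langle a\rangle}M$ as a $D_{\langle a\rangle}\mathcal{U}$-module; everything else is formal bookkeeping with the twist and the localization functors. I do not anticipate a serious obstacle, though one must be careful to verify injectivity of the canonical map before treating $D_{\langle a\rangle}N$ as a submodule of $D^z_{\langle a\rangle}M$, and to use the identity $\phi_z(a)=a$ to transfer $a$-injectivity from the ambient module to $N$.
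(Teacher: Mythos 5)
Your proof is correct and follows essentially the same route as the paper's: show $D_{\langle a\rangle}M$ and hence $D^z_{\langle a\rangle}M$ are simple over $D_{\langle a\rangle}\mathcal{U}$, deduce $D_{\langle a\rangle}N = D^z_{\langle a\rangle}M$, and untwist by $\Phi_{-z}$. The paper simply cites the simplicity of the localization as a known fact, whereas you supply the (standard) argument; otherwise the proofs match.
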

\begin{proof}
We use the fact  that if $M$ is a simple weight ${\mathcal U}$-module, then $D_{\langle a \rangle}M$ and $D_{\langle a \rangle}^z M$  are simple $D_{\langle a \rangle} {\mathcal U}$-modules. Since $N$ is submodule of $D^z_{\langle a \rangle}M$,  $D_{\langle a \rangle} N$ is a submodule of $D^z_{\langle a \rangle}M$. The simplicity of $N$ implies  $D_{\langle a \rangle} N \simeq D^z_{\langle a \rangle}M$ and $D^{-z}_{\langle a \rangle} N \simeq D_{\langle a \rangle}M$. If $a$ acts bijectively, then $M \simeq D_{\langle a \rangle}M$. \end{proof}

We will also consider the following particular case of the twisted localization functor for $\mathcal U = U(\mathfrak g)$, where $\g={\mathcal W}_m+\k \otimes\mathcal O_m$. Let $a_i \in \k^{\alpha_i}$, $i=1,...,\ell$, and $\Gamma = \{ \alpha_1,...,\alpha_{\ell}\}$ is a set of commuting roots of $\k$ that is linearly independent in $\mathbb Z \Delta_{\k}$.  Let also $\lambda \in \h^*$ be such that $\lambda = \sum_{i=1}^{\ell} z_i \alpha_i$. We set $D_{\Gamma}^{\lambda} = D_{\langle a_1 \rangle}^{z_1}...D_{\langle a_\ell \rangle}^{z_\ell}$. If $M \simeq D_{\Gamma}^{\lambda} \bar M$ we will say that $M$ is \emph{obtained by a twisted localization from $\bar M$}. 
If $\bar M$ is bounded, then $M$ is bounded, \cite{M} Lemma 4.4.

\subsection{The algebras $\mathcal O_n$, $\mathcal D_n$, and ${\mathcal W}_n$} \label{subsec-o-d-w}

In what follows, $\mathcal O_n = \mathbb C [x_1,...,x_n]$ and ${\mathcal D}_n$ will stand for the associative algebra of differential operators in $\mathcal O_n$.  In other words, ${\mathcal D}_n$ is the $n$-th Weyl algebra. We will often use the fact that ${\mathcal D}_n \simeq  {\mathcal D}_1 \otimes ... \otimes {\mathcal D}_1$ ($n$ copies).  Also, ${\mathcal W}_n$ will stand for the Lie algebra of vector fields on $\mathbb C^n$, i.e. ${\mathcal W}_n = \mbox{Der} (\mathcal O_n)$.

Henceforth, we fix $\h = \mbox{Span} \{ t_1\partial_1,\dots,t_n\partial_n\}$. Note that $\h$ is a Cartan subalgebra of ${\mathcal W}_n$ and  $\mathcal H=\mathbb C[t_1\partial_1,\dots,t_n\partial_n]$ is a maximal commutative subalgebra of $\mathcal D_n$. We will use the setting of \S \ref{subsec-weight} and \S \ref{subsec-tw-loc} both  for $\mathcal U = U({\mathcal W}_n)$ and $\mathcal U = \mathcal D_n$, and in both cases $\h$ is the one that we fixed above. The set of roots of ${\mathcal W}_n$ is: 
$$
\Delta = \left\{\sum_{j=1}^n m_j\varepsilon_j,  -\varepsilon_i+ \sum_{j\neq i} m_j\varepsilon_j \mid m_j \in \mathbb Z_{\geq 0}, i=1,...,n\right\},
$$
where $\varepsilon_i (t_j \partial_j) = \delta_{ij}$. By $\Delta'$ we denote the set of all invertible roots of ${\mathcal W}_n$. One can see that $\Delta'$ is a root system of type $A_{n}$.

A ${\mathcal W}_n$-module $M$ is a \emph{$({\mathcal W}_n,\mathcal O_n)$-module} if
  $M$ is an $\mathcal O_n$-module satisfying
  \begin{equation*}
    X(fv)=fX(v)+X(f)v,\ \  \forall v\in M, f\in\mathcal O_n, X\in {\mathcal W}_n.
  \end{equation*}
 If $M$ is a weight ${\mathcal W}_n$-module with finite weight multiplicities, then the \emph{restricted dual} $M_*$ of $M$ is by definition the maximal semisimple $\h$-submodule of $M^*$.
 The following properties of the restricted dual functor are straightforward.
 \begin{lemma}\label{restricted_dual} Let $M$ be a weight ${\mathcal W}_n$-module with finite weight multiplicities. Then
   \begin{enumerate}
   \item $\supp M_*=-\supp M$;
   \item $\dim M_*^\mu=\dim M^{-\mu}$;
     \item $M$ is simple if and only if $M_*$ is simple.
     \end{enumerate}
   \end{lemma}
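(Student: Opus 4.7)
The plan is to compute the weight spaces of $M^{*}$ under the standard dual Lie algebra action and then deduce everything from the fact that restricted duality is a weight-preserving exact contravariant involution on the category of weight ${\mathcal W}_n$-modules with finite weight multiplicities.

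First, I would equip $M^{*}$ with the usual dual action $(X\cdot f)(m) = -f(Xm)$ for $X\in {\mathcal W}_n$. A direct computation using $[h,X]=\alpha(h)X$ for $X$ of weight $\alpha$ shows that this preserves weights in the sense that weight vectors in $M^{*}$ behave as expected, so $M_{*}$ is indeed a ${\mathcal W}_n$-submodule of $M^{*}$. A functional $f\in M^{*}$ is a weight vector of weight $\mu$ precisely when $f(hm)=-\mu(h)f(m)$ for all $h\in\h$, $m\in M$; pairing this with $hm=\lambda(h)m$ for $m\in M^{\lambda}$ shows that $f$ must vanish on every $M^{\lambda}$ with $\lambda\neq -\mu$. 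Hence the weight $\mu$ subspace of $M^{*}$ is canonically isomorphic to $(M^{-\mu})^{*}$, which is finite-dimensional by assumption. This yields
\[
  M_{*}=\bigoplus_{\mu\in\h^{*}}(M^{-\mu})^{*},
\]
giving parts (1) and (2) at once.

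For part (3), I would first observe that restricted duality is exact: an exact sequence $0\to N\to M\to M/N\to 0$ of weight modules with finite weight multiplicities splits on each weight space into an exact sequence of finite-dimensional vector spaces, and ordinary vector-space duality is exact, so
\[
  0\to (M/N)_{*}\to M_{*}\to N_{*}\to 0
\]
is exact. Second, I would construct the canonical evaluation map $M\to (M_{*})_{*}$, $m\mapsto\tilde m$ with $\tilde m(f)=f(m)$; a short computation with the conventions above shows $\tilde m$ has weight $\lambda$ whenever $m\in M^{\lambda}$, and on each weight space the map is the standard double-dual isomorphism of a finite-dimensional vector space, so $M\simeq (M_{*})_{*}$ as ${\mathcal W}_n$-modules.

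Given these two facts, (3) is immediate: if $N\subsetneq M$ is a proper nonzero submodule, then $(M/N)_{*}$ is a proper nonzero submodule of $M_{*}$, so $M_{*}$ is not simple; conversely, if $M_{*}$ is not simple then neither is $(M_{*})_{*}\simeq M$. The only mildly delicate point is the sign bookkeeping in the dual action and the verification that the evaluation map $M\to(M_{*})_{*}$ is weight-preserving rather than weight-reversing, but both are routine once the conventions are fixed.
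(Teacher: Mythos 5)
Your argument is correct and fills in exactly the details the paper elides: the paper simply asserts that these properties are ``straightforward'' and gives no proof at all. Your computation that the weight-$\mu$ subspace of $M^*$ is canonically $(M^{-\mu})^*$ (by pairing against each $M^\lambda$ and using $M=\bigoplus_\lambda M^\lambda$) is the right way to get (1) and (2), and your treatment of (3) via weight-spacewise exactness of duality and the evaluation isomorphism $M\xrightarrow{\sim}(M_*)_*$ is the standard and clean route. The only thing I would tighten is the phrase ``weight-preserving exact contravariant involution'' in your opening sentence: the functor negates supports on objects (as your own statement (1) shows), so ``weight-preserving'' there is a slight misnomer, though your subsequent computations make the correct meaning evident. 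You also implicitly use that any $\mathcal W_n$-submodule $N$ of a weight module $M$ is itself a weight submodule (so that $N\hookrightarrow M\twoheadrightarrow M/N$ splits weight-space by weight-space); this is true because the $\h$-components of any $v\in N$ lie in $U(\h)v\subset N$, but it is worth noting since the exactness claim depends on it.
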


   Consider the embedding $\mathbb C^n\to \mathbb CP^n$. The Lie algebra of vector fields on  $\mathbb CP^n$ is isomorphic to $\mathfrak{sl}(n+1)$  and is a Lie subalgebra of ${\mathcal W}_n$.  In other words we have a canonical embedding
   $\mathfrak{sl}(n+1)\subset {\mathcal W}_n$ of Lie algebras.
   
    \begin{lemma}\label{bnd-wn-finite-length} Let $M$ be a bounded weight ${\mathcal W}_n$-module such that $\supp M \subset \lambda + \mathbb Z \Delta_{{\mathcal W}_n}$ for some weight $\lambda$. Then $M$ has finite length. 
       \end{lemma}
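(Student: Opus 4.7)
I would reduce the statement to the reductive subalgebra $\mathfrak{sl}(n+1) \subset {\mathcal W}_n$ recalled immediately before the lemma. This subalgebra has $\h$ as its Cartan subalgebra, and its root system is precisely the set $\Delta'$ of invertible roots of ${\mathcal W}_n$, consisting of the vectors $\pm\varepsilon_i$ and $\varepsilon_i - \varepsilon_j$; in particular $\mathbb Z\Delta' = \mathbb Z\Delta_{{\mathcal W}_n}$. Restricting $M$ therefore yields a bounded weight $\mathfrak{sl}(n+1)$-module of the same degree whose support still lies in $\lambda + \mathbb Z\Delta_{\mathfrak{sl}(n+1)}$. Since every ${\mathcal W}_n$-submodule of $M$ is automatically an $\mathfrak{sl}(n+1)$-submodule, any strictly ascending chain of ${\mathcal W}_n$-submodules gives such a chain of $\mathfrak{sl}(n+1)$-submodules, so it suffices to prove that the $\mathfrak{sl}(n+1)$-length of $M$ is finite.

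For this I would appeal to the following general fact about finite-dimensional reductive Lie algebras: a bounded weight module whose support lies in a single coset of the root lattice has finite length. The argument proceeds by analyzing the simple subquotients. By Fernando's theorem, every simple weight $\mathfrak{sl}(n+1)$-subquotient $L$ of $M$ is parabolically induced from a cuspidal module on a Levi subalgebra, and satisfies $d(L) \leq d(M)$ and $\supp L \subset \lambda + \mathbb Z\Delta'$. Consequently the support of such an $L$ is a finite union of shifted ``cones'' determined by the chosen parabolic.

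The heart of the argument, which I expect to be the main obstacle, is the following support-counting step: for every weight $\mu$, the number of simple subquotients $L$ of $M$ with $L^\mu \neq 0$ is at most $d(M)$, since each simple factor of a composition series with nonzero $\mu$-component contributes to $\dim M^\mu$. Combined with the cone-like shape of the support of each simple bounded $\mathfrak{sl}(n+1)$-module with support in a fixed coset, this forces both the set of isomorphism classes of simple subquotients appearing in $M$ and the multiplicity of each to be finite; any infinite family would, through the overlap of supports, produce an unbounded weight multiplicity in $M$. Carrying this counting step out precisely uses Mathieu's classification of bounded simple $\mathfrak{sl}(n+1)$-modules and the structure of parabolically induced modules on each Levi subalgebra, and once this is in place the finite length of $M$ follows.
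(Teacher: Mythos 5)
Your proposal takes essentially the same route as the paper: reduce to the reductive subalgebra $\mathfrak{sl}(n+1)\subset\mathcal W_n$ (noting $\mathbb Z\Delta'=\mathbb Z\Delta_{\mathcal W_n}$ and that $\mathcal W_n$-submodules are automatically $\mathfrak{sl}(n+1)$-submodules), and then invoke the finite-length result for bounded weight modules over a reductive Lie algebra with support in a single coset of the root lattice --- the paper cites this latter result as Lemma~3.3 of Mathieu~\cite{M} and stops there. Your additional sketch of a proof of that cited fact is not needed and is rather more handwavy than Mathieu's actual argument (in particular, the ``support-counting'' step implicitly assumes a composition series exists, which is the conclusion; and the claim that any infinite family of simple bounded subquotients with support in a fixed coset must have overlapping supports is exactly the nontrivial content of Mathieu's lemma and requires a genuine argument rather than the appeal to ``cone-like shape''), but since you also explicitly say you would appeal to the known reductive-case result, the core reduction is the same as in the paper.
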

       \begin{proof}
        The result holds for $\mathfrak{sl}(n+1)$-modules, see Lemma 3.3 in \cite{M}, and hence it holds for ${\mathcal W}_n$ by using the natural embedding of $\mathfrak{sl}(n+1)$ in ${\mathcal W}_n$.
       \end{proof}

\subsection{Simple weight $\mathcal D_n$-modules} \label{subsec-simple-weight} According to \S  \ref{subsec-weight}, a ${\mathcal D}_n$-module $M$ is a {weight module} if 
$$
M = \bigoplus_{\lambda \in \C^n}M^{\lambda},
$$
where $M^{\lambda} = \left\{ m \in M \; | \; x_i\partial_i m = \lambda_i m, \mbox{ for } i =1,...,n \right\}$.
Below we recall the classification of the simple weight $\mathcal D_n$-modules.

We will use the automorphism $\sigma_F : {\mathcal D}_n\to {\mathcal D}_n$  defined by
$\sigma_F(t_i)=\partial_i, \sigma_F(\partial_i)=-t_i$ for all $i$.
We call  $\sigma_F$ the (full) \emph{Fourier transform} of  ${\mathcal D}_n$. If $M$ is a ${\mathcal D}_n$-module, by $M^F$ we denote the module $M$ twisted by $\sigma_F$.  

The following gives the classification of all simple weight ${\mathcal D}_n$-modules, see for example Corollary 2.9 in \cite{GS1} 
\begin{proposition}
\begin{itemize}
\item[(i)] Every simple weight module of $ {\mathcal D}_1$ is isomorphic to one of the following: $\mathcal O_1 = \C [x]$, $\mathcal O_1^F$, $x^{\lambda} \C [x^{\pm 1}]$, $\lambda \in \C \setminus \mathbb Z$.
\item[(ii)] Every simple weight module of $ {\mathcal D}_n$ is isomorphic to $P_1 \otimes ...\otimes P_n$ where $P_i$ is a simple weight $ {\mathcal D}_1$-module.
\end{itemize}
\end{proposition}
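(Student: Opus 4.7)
The plan is to handle (i) by a trichotomy based on the injectivity of $x$ and $\partial$ on the simple weight $\mathcal D_1$-module $M$, and then to deduce (ii) by induction on $n$, peeling off one $\mathcal D_1$-factor at a time.

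For (i), I would first note that any weight vector $v$ with $\partial v = 0$ has weight $0$ (since $(x\partial)v = 0$), while $xw = 0$ forces the weight of $w$ to be $-1$. If $\ker \partial \neq 0$, then $\partial^k(x^k v) = k!\,v$ shows that $\{x^k v\}_{k\geq 0}$ is a basis of the cyclic module $\mathcal D_1 v = M$, giving $M \cong \mathcal O_1$; dually, $\ker x \neq 0$ yields $M \cong \mathcal O_1^F$. When both $x$ and $\partial$ act injectively, the identities $x\partial = h$ and $\partial x = h+1$ (with $h := x\partial$) evaluated on each weight space $M^\mu$ force $\supp M \subseteq \lambda + \mathbb Z$ with $\lambda \notin \mathbb Z$; moreover, since $x^a \partial^a = h(h-1)\cdots(h-a+1)$, the weight-zero part $\mathcal D_1^0$ of $\mathcal D_1$ coincides with $\C[h]$, so each weight space of the cyclic $M$ is one-dimensional, and $M \cong x^\lambda \C[x^{\pm 1}]$ by comparing actions.

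For (ii), write $\mathcal D_n \cong \mathcal D_1^{(i)} \otimes \mathcal D_n^{(\neq i)}$, where the superscripts denote restriction to the $i$-th coordinate and its complement. The key dichotomy is that $\ker \partial_i$ and $\ker x_i$ cannot both be nonzero in a simple $M$: the submodule generated by any annihilator of $\partial_i$ has nonnegative $i$-th weight components, while the one generated by any annihilator of $x_i$ has $i$-th weight component $-1$, and these two cannot both equal $M$. If $\ker \partial_i \neq 0$, pick a weight vector $v$ with $\partial_i v = 0$; since $\partial_i$ commutes with $\mathcal D_n^{(\neq i)}$ and kills $\mathcal D_n^{(\neq i)} v$, one obtains $M = \mathcal D_n v = \C[x_i]\cdot M'$ where $M' := \mathcal D_n^{(\neq i)} v$. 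I would check that the multiplication map $\C[x_i] \otimes M' \to M$ is an isomorphism $M \cong \mathcal O_1 \otimes M'$ of $\mathcal D_1^{(i)} \otimes \mathcal D_n^{(\neq i)}$-modules; simplicity of $M$ forces simplicity of the $\mathcal D_n^{(\neq i)}$-module $M'$, and induction on $n$ finishes this subcase. The case $\ker x_i \neq 0$ is dual, producing $\mathcal O_1^F$ in the $i$-th slot. If every $x_i$ and every $\partial_i$ acts injectively, the slot-by-slot analysis of (i), together with $\mathcal D_n^0 = \C[h_1,\dots,h_n]$, shows that $\mu_i \notin \mathbb Z$ for every $\mu \in \supp M$ and every $i$, that every weight space is one-dimensional, and hence $M \cong \bigotimes_{i=1}^n x_i^{\mu_i} \C[x_i^{\pm 1}]$.

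The main delicate point is verifying the splitting $M \cong \mathcal O_1 \otimes M'$ as $\mathcal D_n$-modules: surjectivity of the multiplication map is immediate from $\partial_i v = 0$ together with the commutativity of $\partial_i$ with $\mathcal D_n^{(\neq i)}$ (any $\partial_i^b$ with $b \geq 1$ collapses the PBW expansion of $\mathcal D_n$ when applied to $\mathcal D_n^{(\neq i)} v$), while injectivity uses that $x_i$ remains injective on $M$ --- a consequence of the dichotomy above. The remaining structural facts (that $\mathcal D_n^0 = \C[h_1,\dots,h_n]$, hence that weight spaces of simple modules are one-dimensional in the all-injective case) reduce to the identity $x_i^a \partial_i^a = h_i(h_i-1)\cdots(h_i - a + 1)$ applied slot by slot. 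Once these steps are in place, the inductive structure reduces (ii) to (i) and one reads off the tensor factorization.
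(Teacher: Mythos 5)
The paper does not prove this proposition; it is quoted verbatim with the reference ``see for example Corollary 2.9 in \cite{GS1}'', so there is no in-paper proof against which to compare your argument. On its own merits, your proof is correct and complete.

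For part (i), the trichotomy according to which of $x,\partial$ has nontrivial kernel, together with the computation that the degree-zero subalgebra $\mathcal D_1^0$ equals $\C[x\partial]$ (via $x^a\partial^a = h(h-1)\cdots(h-a+1)$, $h=x\partial$) and hence that every weight space of a simple weight module is one-dimensional, is a clean and standard route; the generic case then yields $x^\lambda\C[x^{\pm 1}]$ by normalization of a weight basis. For part (ii), the crucial dichotomy --- that $\ker\partial_i$ and $\ker x_i$ cannot simultaneously be nonzero in a simple weight module --- is established correctly by comparing the $i$-th components of $\supp M$ computed from the two cyclic generators. The splitting $M\simeq\mathcal O_1\otimes M'$ is also correct: surjectivity uses $\partial_i v=0$ and commutativity of $\partial_i$ with $\mathcal D_n^{(\neq i)}$, and injectivity of $\C[x_i]\otimes M'\to M$ follows because $M'$ lies in the layer where the $i$-th weight coordinate is $0$ (so the images $x_i^k m_k$ are weight-separated) together with injectivity of $x_i$ guaranteed by the dichotomy. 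Simplicity of $M'$ then follows since $\C[x_i]\cdot M''$ would otherwise be a proper nonzero $\mathcal D_n$-submodule of $M$, and induction finishes. One small point worth spelling out in the all-injective case: $\supp M$ equals a full coset $\mu+\mathbb Z^n$ (injectivity of each $x_i,\partial_i$ between one-dimensional weight spaces forces bijectivity), which is implicitly used when identifying $M$ with $\bigotimes_i x_i^{\mu_i}\C[x_i^{\pm 1}]$; this is a one-line addition and does not affect the argument.
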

We note also that every simple nontrivial weight $\mathcal D_n$-module $M$  has degree $1$, i.e. all its weight multiplicities equal $1$. Moreover, for every $i$, $x_i$ (respectively, $\partial_i$) acts either injectively, or locally nilpotently on $M$.
Let $I^+(M)$ denote the subset of indices in $\{1,\dots,n\}$ such that $\partial_i$ acts locally nilpotently on $M$ and $x_i$ acts injectively on $M$,
        $I^-(M)$ the subset of indices such that $x_i$ acts locally nilpotently on $M$ and $\partial_i$ acts injectively on $M$, and  $I^0(M)$ the subset of indices such that both $x_i$ and $\partial_i$ act injectively on $M$. Note that $\{1,\dots,n\} = I^-(M) \sqcup I^0(M) \sqcup I^+(M)$. Furthermore, there exists $\lambda \in \supp M$ such that
$$
\supp M = \lambda + \sum_{i \in I^+(M)} \mathbb Z_{\geq 0} \varepsilon_i + \sum_{j \in I^0(M)} \mathbb Z \varepsilon_j +  \sum_{k \in I^-(M)} \mathbb Z_{\leq 0} \varepsilon_k.
$$

 \subsection{Parabolic induction in general}
Let $\g$ be any Lie algebra with Cartan subalgebra $\h$ such that $\g=\h\oplus\bigoplus_{\alpha\in\Delta} \g_\alpha$. Let $\gamma\in\h^*$. Then the subalgebra
$$\p=\h\oplus\bigoplus_{Re\langle \gamma,\alpha\rangle\geq 0}\g_\alpha$$
is called the \emph{parabolic subalgebra} of $\g$ corresponding to $\gamma$. The Levi subalgebra of $\p$ is 
$$\l=\h\oplus\bigoplus_{Re\langle \gamma,\alpha\rangle=0 }\g_\alpha,$$
and the nilradical of $\p$ is
$$\n=\bigoplus_{Re\langle \gamma,\alpha\rangle>0}\g_\alpha,$$

We are going to use extensively the following standard result.
\begin{proposition}\label{parabolicind}
  
(a)  Let $N$ be a simple $\l$-module, considered also as simple $\p$-modules by letting $\n$ act trivially on $N$. Then the $\g$-module $U(\g)\otimes_{U(\p)}N$
has a unique simple quotient.

(b) If $L$ is a simple $\g$-module such that $L^\n\neq 0$, then $L^\n$ is a simple $\l$-module.

(c) If $L$ and $M$ are simple $\g$-modules such that $M^\n\simeq L^\n$ as $\l$-modules, then $M \simeq L$ as $\g$-modules.
\end{proposition}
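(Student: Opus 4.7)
The plan is to work with the triangular PBW decomposition $\g = \n^- \oplus \l \oplus \n$, where $\n^- := \bigoplus_{Re\langle\gamma,\alpha\rangle<0}\g_\alpha$ is the opposite nilradical, combined with the real-valued ``height'' grading $r_\gamma(\mu) := Re\langle\gamma,\mu\rangle$ on $\h^*$. By construction, roots in $\l$, $\n$, $\n^-$ have $r_\gamma$ equal to $0$, strictly positive, and strictly negative, respectively, so for any weight $\g$-module $V$ the decomposition $V = \bigoplus_r V[r]$ with $V[r] := \bigoplus_{\mu: r_\gamma(\mu) = r}V^\mu$ is preserved by $\l$, raised by $\n$, and lowered by $\n^-$. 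Since $\h \subset \l$, all $\g$-submodules of $V$ are automatically weight submodules, hence graded by this decomposition.

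For (a), taking $N$ to be a (weight) simple $\l$-module, its weights lie in a single coset $\lambda_0 + Q_\l$ and hence share the common value $c := r_\gamma(\lambda_0)$. PBW identifies $M := U(\g)\otimes_{U(\p)}N$ with $U(\n^-)\otimes N$, a weight $\g$-module concentrated in the range $r \leq c$ with top slice $M[c] = 1 \otimes N \simeq N$. If a submodule $K \subset M$ satisfies $K \cap M[c] \neq 0$, then $K \cap M[c]$ is a nonzero $\l$-submodule of the simple module $N$, so $K \supset N$, and since $N$ generates $M$ this forces $K = M$. Consequently every proper submodule of $M$ lies in $\bigoplus_{r<c} M[r]$, the sum of all proper submodules is still proper, and the unique maximal proper submodule produces the unique simple quotient.

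For (b), the relation $[\l,\n] \subset \n$ immediately gives that $L^\n$ is an $\l$-submodule of $L$. Pick a nonzero weight vector $v \in L^\n$ of weight $\mu_0$; since $\n v = 0$ we have $U(\n)v = \C v$, hence $U(\g)v = U(\n^-)U(\l)v$, and simplicity of $L$ forces every weight of $L$ to satisfy $r_\gamma \leq r_\gamma(\mu_0)$. Thus $c := \max r_\gamma(\supp L)$ equals $r_\gamma(\mu_0)$. Since $\n$ strictly raises $r_\gamma$, $L[c] \subset L^\n$, and the argument above gives the reverse inclusion, so $L^\n = L[c]$. Given any nonzero $\l$-submodule $W \subset L^\n$, the same computation yields $L = U(\g)W = U(\n^-)W$, and reading off the $L[c]$-component gives $L[c] = W$, that is, $W = L^\n$.

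For (c), part (b) shows that $L^\n$ and $M^\n$ are simple $\l$-modules, so with $N := L^\n \simeq M^\n$ part (a) produces a unique simple quotient of $U(\g)\otimes_{U(\p)}N$. The computation $L = U(\g)L^\n = U(\n^-)L^\n$ (and likewise for $M$) from the proof of (b) realizes both $L$ and $M$ as simple quotients of $U(\g)\otimes_{U(\p)}N$, so $L \simeq M$. The one delicate point throughout is that $r_\gamma$ is merely real-valued rather than integer-valued; the arguments above avoid any need for integrality by using only the signs of $r_\gamma$ on roots, which is all that the definition of the parabolic provides.
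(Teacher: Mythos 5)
The paper does not prove this proposition: it is stated as a ``standard result'' (preceding Remark~\ref{rk-top}) and used as a black box, so there is no paper proof to compare against. Your proof is correct and is essentially the canonical argument for generalized Verma modules: the triangular decomposition $\g=\n^-\oplus\l\oplus\n$, the PBW identification $U(\g)\otimes_{U(\p)}N\simeq U(\n^-)\otimes N$, and the real-valued ``$\gamma$-height'' $r_\gamma(\mu)=\mathrm{Re}\langle\gamma,\mu\rangle$, which is constant on roots of $\l$ and strictly positive (resp.\ negative) on roots of $\n$ (resp.\ $\n^-$). Your observation that integrality of the grading is never used --- only the sign of $r_\gamma$ on roots --- is exactly the right thing to notice here, since $\gamma$ need not lie in the rational span of the weight lattice.

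Two small remarks. First, your argument leans on the fact that every $\g$-submodule of a weight module is $\h$-stable, hence automatically $r_\gamma$-graded; this is where the parenthetical ``(weight)'' qualifier you inserted is actually doing real work, since the paper's statement does not say ``weight'' explicitly but every application of Proposition~\ref{parabolicind} in the paper is to weight modules. Second, in part (c) one should read the hypothesis as $L^\n\simeq M^\n\neq 0$ (as your proof in effect does, by invoking part (b)); without the nonvanishing the statement is vacuous at best.
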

\begin{remark} \label{rk-top}If $M$ is a simple weight $\g$-module then $M^\n=\bigoplus_{\lambda\in S}M^\lambda$ where $S$ is the subset of $\supp M$ such that $\lambda+\alpha\notin \supp M$ for any $\alpha\in\Delta(\n)$.
 For an arbitrary weight module $M$ we call $\oplus_{\lambda\in S}M^\lambda$ the \emph{$\p$-top} of $M$ and denote it by $M^{\rm top}$.
  \end{remark}

 \subsection{Parabolic induction for ${\mathcal W}_n$} \label{subsec-par-ind}

In this subsection we recall  one of the main results in \cite{PS}. Recall the definitions of  $\Delta$ and $\Delta'$ from \S \ref{subsec-o-d-w}.
Let $\gamma=a_1\varepsilon_1+\dots+a_n\varepsilon_n$ for some $a_i\in\mathbb R$.
Set $$\Delta_0=\{\alpha\in\Delta\mid(\gamma,\alpha)=0\},\quad \Delta_{\pm}=\{\alpha\in\Delta\mid(\gamma,\alpha)>(<0)\},$$
$$\Delta'_0=\Delta_0\cap\Delta',\quad \Delta'_\pm=\Delta_\pm\cap\Delta'.$$
Let $$\p=\h\oplus\bigoplus_{\alpha\in\Delta_0\cup\Delta_+} ({\mathcal W}_n)_\alpha,\quad \g=\h\oplus\bigoplus_{\alpha\in\Delta_0} ({\mathcal W}_n)_\alpha.$$

\begin{theorem}\label{PS} \label{PS}Let $M$ be a simple weight ${\mathcal W}_n$-module. 

  (a) There exists a weight $\lambda\in\supp M$ and $\gamma$ such that $$\supp M\subset \lambda+{{\mathbb Z}_{\geq 0}}(\Delta_-'\cup\Delta'_0).$$

  (b) One can choose $\gamma$ in such a way that $\mathbb Z\Delta_0'=\mathbb Z\Delta_0$ and $$\lambda+\mathbb Z\Delta_0\subset\supp M.$$

  (c) $M$ is a unique simple quotient of the parabolically induced module $U({\mathcal W}_n)\otimes_{U(\p)}M_0$ for some simple weight $\g$-module $M_0$ that is extended in the natural way to a simple $\p$-module.
\end{theorem}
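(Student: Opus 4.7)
My plan adapts the Fernando--Futorny parabolic-induction strategy to $\mathcal{W}_n$, exploiting the fact that the invertible roots $\Delta'$ form an $A_n$ root system corresponding to the embedded subalgebra $\mathfrak{sl}(n+1) \subset \mathcal{W}_n$. The starting point is that every element of a root space $(\mathcal{W}_n)_\alpha$ is $\ad$-locally nilpotent on $\mathcal{W}_n$ (direct check on the monomial basis). By the standard dichotomy for $\ad$-nilpotent elements acting on a simple weight module, each nonzero such element acts on $M$ either locally nilpotently or injectively. Combined with simplicity of $M$ and the $A_n$ structure of $\Delta'$, this yields a partition $\Delta' = \Delta'_N \sqcup \Delta'_I$ in which $\Delta'_N$ (the roots whose root spaces act locally nilpotently on $M$) is a parabolic subset of $\Delta'$.

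For part (a), choose $\gamma \in \h^*_{\mathbb R}$ in the open chamber with $\Delta'_+ \subseteq \Delta'_N$. Using that $\mathcal{W}_n$ is generated as a Lie algebra by its invertible root spaces together with $\h$, one deduces that $\supp M$ is bounded above in the $\gamma$-direction; let $\lambda$ be a $\gamma$-maximal weight. For any positive root $\alpha$ (invertible or not), $(\gamma, \lambda + \alpha) > (\gamma, \lambda)$ forces $M^{\lambda + \alpha} = 0$, so $\lambda$ is extremal. The stronger cone statement $\mu - \lambda \in \mathbb{Z}_{\geq 0}(\Delta'_- \cup \Delta'_0)$ follows by varying $\gamma'$ within its open chamber: for every such $\gamma'$ we get $(\gamma', \mu - \lambda) \leq 0$, and in the $A_n$ root lattice this dual-cone bound upgrades to the claimed simplicial-cone membership.

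For part (b), the $\gamma$'s compatible with (a) form an open cone inside a linear subspace of $\h^*_{\mathbb R}$. A generic $\gamma$ in this cone avoids the countably many hyperplanes defined by $(\gamma, \beta) = 0$ for non-invertible $\beta \notin \mathbb{R}\Delta'_0$; combined with the saturation property of Levi subsystems in the $A_n$ root lattice (a non-invertible root in $\mathbb{R}\Delta'_0$ automatically lies in $\mathbb{Z}\Delta'_0$), this yields $\mathbb{Z}\Delta_0 = \mathbb{Z}\Delta'_0$. To arrange $\lambda + \mathbb{Z}\Delta_0 \subseteq \supp M$, note that by Proposition~\ref{parabolicind}(b) the top $M^{\rm top}$ is a simple $\g$-module supported in $\lambda + \mathbb{Z}\Delta_0$; if its support is a proper subset, one can refine $\gamma$ to a smaller chamber (shrinking $\Delta_0$) so that the resulting top fills the corresponding smaller coset. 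Since $\Delta_0$ can only shrink finitely many times, this process terminates.

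For part (c), set $M_0 := M^{\rm top}$. By the previous step, $M_0$ is a simple $\l = \g$-module with $\n M_0 = 0$, hence a simple $\p$-module. Frobenius reciprocity turns the inclusion $M_0 \hookrightarrow M$ into a nonzero $\mathcal{W}_n$-homomorphism $U(\mathcal{W}_n) \otimes_{U(\p)} M_0 \to M$, which is surjective because $M$ is simple. Proposition~\ref{parabolicind}(a) then identifies $M$ with the unique simple quotient. The main obstacle throughout is the construction of the parabolic partition of $\Delta'$ and its propagation to the non-invertible part: while the individual-element dichotomy is standard, its coherence across each finite-dimensional root space and the transfer of local nilpotency from invertible to non-invertible root spaces require careful use of the commutator identities that express non-invertible root vectors as iterated brackets of invertible ones.
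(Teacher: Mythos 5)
The paper does not prove this theorem; it is cited verbatim from Penkov--Serganova \cite{PS}. So you are offering an independent proof, and it has a fatal gap at its first step.

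You assert that every element of a root space $({\mathcal W}_n)_\alpha$ is $\ad$-locally nilpotent on ${\mathcal W}_n$, and then invoke the Fernando-type dichotomy (locally nilpotent vs.\ injective action on a simple weight module) to produce a parabolic partition of $\Delta'$. The claim is false. Take $\alpha=\varepsilon_1\in\Delta'$ and $X=x_1^2\partial_1\in({\mathcal W}_n)_{\varepsilon_1}$; then $[\,x_1^2\partial_1,\;x_1^{j}\partial_1\,]=(j-2)\,x_1^{j+1}\partial_1$, so $(\ad X)^k(x_1^3\partial_1)=k!\,x_1^{k+3}\partial_1\neq 0$ for all $k$, and $\ad X$ is \emph{not} locally nilpotent. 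The same holds for every nonzero vector of weight $\varepsilon_i$, including the $\mathfrak{sl}(n+1)$ root vectors $x_i\sum_j x_j\partial_j$: these are nilpotent inside $\mathfrak{sl}(n+1)$ but not $\ad$-locally nilpotent in ${\mathcal W}_n$. The only $\ad$-locally nilpotent root vectors of ${\mathcal W}_n$ are those of nonpositive polynomial degree, i.e.\ the $\partial_i$ (weight $-\varepsilon_i$) and the $x_i\partial_j$, $i\neq j$ (weight $\varepsilon_i-\varepsilon_j$). The dichotomy therefore does \emph{not} apply to the roots $\varepsilon_i$, which are exactly the ones you need to control on the positive side of $\gamma$ to bound $\supp M$ from above. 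Restricting to the $\mathfrak{sl}(n+1)$-subalgebra does not repair this: $M$ need not be $\mathfrak{sl}(n+1)$-simple, so the set of $X$-locally-nilpotent vectors is only an $\mathfrak{sl}(n+1)$-submodule, not $0$ or $M$. This is not a small technical hole; it is precisely the difficulty that makes the parabolic induction theorem for ${\mathcal W}_n$ much harder than its reductive analogue, and it is what \cite{PS} works around by a direct analysis of the support (invariant cones and the geometry of $\supp M$), not by a pointwise dichotomy for positive root vectors.

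There are secondary gaps even if one grants the dichotomy. The ``dual-cone to simplicial-cone'' upgrade in (a) is asserted, not proved: it requires knowing that for the chosen $\gamma$ the integral dual cone is generated over $\mathbb Z_{\geq 0}$ by $\Delta'_-\cup\Delta'_0$, which fails for many $\gamma$ because $\Delta$ contains roots such as $\varepsilon_i+\varepsilon_j$ that can lie in $\Delta_0$ without lying in $\mathbb Z\Delta'_0$; one must show a valid $\gamma$ with the saturation property exists, not merely take a ``generic'' one in an unspecified cone. In (b), the iteration that ``shrinks $\Delta_0$ until the top fills $\lambda+\mathbb Z\Delta_0$'' lacks an argument that each refinement of $\gamma$ preserves (a) and that the process terminates at the filling condition rather than at an empty Levi. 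Part (c) is the routine step, granted (a) and (b) together with Proposition~\ref{parabolicind}.
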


\subsection{Tensor modules over ${\mathcal W}_n$}\label{subsec-tensor-modules}
Let $V$ be a $\mathfrak{gl} (n)$-module and $\tilde V:=\mathcal O_n\otimes V$. One can look at $\tilde V$ as the space of sections of the $\mathfrak{gl}(n)$-bundle on $\mathbb C^n$ with fiber $V$.
Thus, $\tilde V$ has the natural structure of a $({\mathcal W}_n,\mathcal O_n)$-module. 
   
For a ${\mathcal D}_n$-module $P$ and a $\mathfrak{gl} (n)$-module $V$, we define the tensor $({\mathcal W}_n, \mathcal O_n)$-module by $$T(P,V) := P\otimes_{\mathcal O_n}\tilde V$$ and call it \emph{the tensor ${\mathcal W}_n$-module relative to
$P$ and $V$}. If $P$ is a weight ${\mathcal D}_n$-module and $V$ is a weight $\mathfrak{gl}(n)$-module then $P\otimes_{\mathcal O_n}\tilde V$ is a weight module and
        $$\supp (P\otimes_{\mathcal O_n}\tilde V)=\supp P+\supp V.$$

    Alternatively, we can define $T(P,V)$ as follows. Consider  $T(P,V)$ as the vector space $T(P,V) = P\otimes_{\mathbb C} V$   and define  $\mathcal W_n$-action and $\mathcal O_n$-action by the formulas
        \begin{eqnarray*}
        x^{\alpha} \partial_j \cdot (f \otimes v) & = &  x^{\alpha} \partial_j f \otimes v + \sum_{i=1}^n \partial_i (x^{\alpha})f \otimes E_{ij}v, \\
         x^{\alpha} \cdot (f \otimes v) & = & x^{\alpha}f \otimes v,
        \end{eqnarray*}
        for $f \in P$, $v \in V$.

In what follows, the $k$-th exterior power $\bigwedge^k {\mathbb C}^n$ of the natural representation of $\mathfrak{gl(n)}$ will be called the $k$-th fundamental representation. We have the following result from  \cite{LLZ} (Theorem 3.1 and Lemma 3.7):

\begin{proposition} \label{prop-tensor-simple}
\begin{itemize}
\item[(i)] Let $P$ be a simple ${\mathcal D}_n$-module and $V$ be a simple $\mathfrak{gl}(n)$-module that is not isomorphic to a fundamental representation. Then $T(P,V)$ is a simple ${\mathcal W}_n$-module.
\item[(ii)]
Let $P_1$ and $P_2$ be simple  ${\mathcal D}_n$-modules  
and let $V_1$ and $V_2$ be simple $\mathfrak{gl}(n)$-modules such that neither of them is isomorphic to a fundamental representation. Then $T(P_1,V_1) \simeq T(P_2,V_2)$ if and only if $P_1 \simeq P_2$ and $V_1 \simeq V_2$.
\end{itemize}
\end{proposition}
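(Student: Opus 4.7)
The plan to prove Proposition \ref{prop-tensor-simple} is as follows.

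For part (i), let $N \subseteq T(P,V) = P \otimes V$ be a nonzero $\mathcal{W}_n$-submodule; the goal is to show $N = T(P,V)$. The first observation is that $\partial_j \in \mathcal{W}_n$ acts as $\partial_j \otimes 1$ on $P \otimes V$, so $N$ is automatically stable under the commutative subalgebra $\mathbb{C}[\partial_1,\ldots,\partial_n] \subset \mathcal{D}_n$. The main technical step is to upgrade this to stability under the full algebra $\mathcal{D}_n \otimes U(\mathfrak{gl}(n))$ acting on $P \otimes V$. Starting from a weight vector $f \otimes v \in N$ with $v$ a highest-weight vector of $V$, the identity
\[
x_i \partial_j \cdot (g \otimes v) = x_i \partial_j g \otimes v + g \otimes E_{ij} v
\]
gives, for indices $(i,j)$ with $E_{ij} v = 0$, that $x_i \partial_j g \otimes v \in N$ whenever $g \otimes v \in N$. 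Iterating with higher monomials $x^\alpha \partial_j$ and combining with the $\mathbb{C}[\partial]$-action, one inductively brings all elements of $\mathcal{D}_n \cdot f \otimes v$ into $N$. A parallel bootstrapping using weight filtrations on $V$ then produces $\mathcal{D}_n \cdot f \otimes E_{ij} v$, and so on, eventually yielding $\mathcal{D}_n \cdot f \otimes U(\mathfrak{gl}(n)) v = P \otimes V$ by the Jacobson density theorem applied to the simple factors $P$ and $V$.

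The fundamental-representation hypothesis enters precisely at the bootstrapping step: the obstruction to simultaneously generating the full $\mathcal{D}_n$- and $\mathfrak{gl}(n)$-actions out of $\mathcal{W}_n$ is captured by the de Rham-type map
\[
d: T(P,\textstyle\bigwedge^k \mathbb{C}^n) \to T(P,\textstyle\bigwedge^{k+1} \mathbb{C}^n), \qquad d(f \otimes \omega) = \sum_j \partial_j f \otimes e_j \wedge \omega,
\]
which is a nonzero $\mathcal{W}_n$-equivariant map whose kernel (or image) is a proper submodule exactly when $V$ is a fundamental representation. For all other simple finite-dimensional $V$ no such first-order $\mathcal{W}_n$-equivariant relation exists among the vectors $E_{ij}v$, and the iteration can be pushed through. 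Verifying this representation-theoretic \emph{no-obstruction} claim case-by-case (equivalently, showing that the zero weight space of $V \otimes (\mathbb{C}^n)^*$ admits no $\mathfrak{gl}(n)$-invariant giving a first-order $\mathcal{W}_n$-invariant relation other than for $V = \bigwedge^k \mathbb{C}^n$) is expected to be the main obstacle.

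For part (ii), given an isomorphism $T(P_1,V_1) \simeq T(P_2,V_2)$, I would recover $V_i$ first. If $P_i$ admits a $\partial$-singular vector $f_0 \in \bigcap_j \ker \partial_j$ (as for $P = \mathcal{O}_n$), then $f_0 \otimes V_i$ is a $\mathfrak{gl}(n)$-stable subspace of $T(P_i,V_i)$ on which $x_k \partial_j$ acts purely as $E_{kj}$, providing a canonical copy of $V_i$. For general $P_i$ with no singular vector (for example $\mathcal{O}_n^F$ or a localized module), one passes to the Fourier transform $\sigma_F$ or applies a suitable twisted localization $D_\Gamma^\lambda$ of \S\ref{subsec-tw-loc} to reduce to the singular case; since these functors are invertible, the identification of $V$ transfers back. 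Equivalently, the support formula $\supp T(P,V) = \supp P + \supp V$ combined with the fact that simple weight $\mathcal{D}_n$-modules have all weight multiplicities equal to $1$ permits reconstruction of $(\supp P,\supp V)$ from the weight-multiplicity generating function of $T(P,V)$; this pins down both $P$ and $V$ up to isomorphism. Once $V_1 \simeq V_2$ is established, applying the $\mathfrak{gl}(n)$-isotypic projection to the isomorphism produces a $\mathcal{D}_n$-module isomorphism $P_1 \simeq P_2$. The non-fundamental hypothesis is invoked one more time to rule out the ambiguity introduced by the de Rham short exact sequence in the fundamental case.
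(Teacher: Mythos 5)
The paper supplies no proof of this proposition: it is imported verbatim from \cite{LLZ} (Theorem~3.1 and Lemma~3.7). So there is no in-paper argument to compare against, and your proposal must be judged on its own terms.

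On those terms there are substantive gaps. For (i), you begin from ``a weight vector $f\otimes v\in N$ with $v$ a highest-weight vector of $V$.'' Nothing guarantees that a nonzero ${\mathcal W}_n$-submodule $N$ of $P\otimes V$ contains a pure tensor, let alone one whose $V$-factor is highest weight: a general element of a weight space of $T(P,V)$ is a sum $\sum_i f_i\otimes v_i$ with the $v_i$ of distinct $\mathfrak{gl}(n)$-weights, and the crux of any simplicity argument here is precisely the reduction from such a sum to a pure tensor, which you do not carry out. Worse, the proposition places no weight or finite-dimensionality hypothesis on $V$, so a highest-weight vector need not exist at all; your plan silently assumes $V$ finite-dimensional. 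Finally, the ``no-obstruction'' claim --- that outside the fundamental representations the ${\mathcal W}_n$-action on $P\otimes V$ generates the whole ${\mathcal D}_n\otimes U(\mathfrak{gl}(n))$-action --- is flagged by you as ``the main obstacle'' and left unproved; but that claim essentially \emph{is} the proposition. The de Rham differential is offered as an explanation for why fundamental $V$ fail, which is suggestive, but it does not rule out other first- or higher-order obstructions for nonfundamental $V$.

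For (ii), recovering $V$ from the $\bigcap_j\ker\partial_j$-slice of $P$ is fine when that slice is nonzero, but your proposed reductions do not carry it to the general case: the Fourier transform $\sigma_F$ does not preserve the embedding ${\mathcal W}_n\hookrightarrow{\mathcal D}_n$, so $T(P^F,V)$ is not the $\sigma_F$-twist of $T(P,V)$ as a ${\mathcal W}_n$-module, and twisted localization likewise alters the ${\mathcal W}_n$-module and with it the fibre $V$ one would read off. The alternative ``reconstruct $(\supp P,\supp V)$ from the weight-multiplicity generating function'' also fails: distinct simple weight $\mathfrak{gl}(n)$-modules can share supports and multiplicities, and for nonweight $V$ the invariant is empty. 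The overall shape of the argument (identify $V$, project to the $V$-isotypic component, get a ${\mathcal D}_n$-isomorphism of the $P$'s) is reasonable, but every step that would make it a proof has been deferred.
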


We next consider tensor modules $T(P,V)$ for which $V$ is a fundamental representation. For any ${\mathcal D}_n$-module $P$, the \emph{differential map}
\begin{equation*} 
d : T(P, \bigwedge {\mathbb C}^n) \to T(P, \bigwedge {\mathbb C}^n),
\end{equation*}
is defined by $d(f\otimes v) = \sum_{i=1}^n (\partial_i f) \otimes (e_i \wedge v)$, where $(e_1,...,e_n)$ is the standard basis of ${\mathbb C}^n$ associated to the coordinates $x_1,...,x_n$ of  ${\mathbb C}^n$. The map $d$ is a homomorphsim of ${\mathcal W}_n$-modules but not $\mathcal O_n$-modules. One readily sees
that $d^2=0$. As a result we have the following generalized de Rham complex:
$$
0 \xrightarrow[]{d}T(P, \bigwedge\nolimits^0 {\mathbb C}^n) \xrightarrow[]{d}  T(P, \bigwedge\nolimits^1 {\mathbb C}^n) \xrightarrow[]{d} \cdots \xrightarrow[]{d} T(P, \bigwedge\nolimits^n {\mathbb C}^n) \xrightarrow[]{d}  0.
$$

By Theorem 3.5 in \cite{LLZ} we have the following.

\begin{proposition}\label{tensormodule} Let $P$ be a simple $\mathcal D_n$-module.
\begin{itemize}

\item[(i)] If $k = 0,...,n-1$, then the module $T(P, \bigwedge\nolimits^k {\mathbb C}^n)$ has a simple quotient isomorphic to $d T(P, \bigwedge\nolimits^k {\mathbb C}^n)$. 

\item[(ii)] The module $T(P, \bigwedge\nolimits^0 {\mathbb C}^n)$ is simple if and only if $P$ is not isomorphic to $\mathcal O_n$. If $P\simeq \mathcal O_n$ then $T(P, \bigwedge\nolimits^0 {\mathbb C}^n)$ contains a trivial
  ${\mathcal W}_n$-submodule $\mathbb C$ and
  $dT(P, \bigwedge\nolimits^{0} {\mathbb C}^n)\simeq T(P, \bigwedge\nolimits^{0} {\mathbb C}^n)/\mathbb C$.

\item[(iii)]The module $T(P, \bigwedge\nolimits^n {\mathbb C}^n)$ is simple if and only if $\sum_i \partial_i P \neq P$.
\end{itemize}
\end{proposition}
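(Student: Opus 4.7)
The overall strategy is to identify, for each $k$, the kernel of the de Rham map $d : T(P,\bigwedge^k {\mathbb C}^n) \to T(P,\bigwedge^{k+1} {\mathbb C}^n)$ as the unique maximal proper $\mathcal{W}_n$-submodule when $k<n$, and then read off each of the three parts. A direct check using the formulas in \S \ref{subsec-tensor-modules}, together with the Leibniz rule for the $\mathfrak{gl}(n)$-action on $\bigwedge^{k+1}{\mathbb C}^n$, shows that $d$ commutes with every $x^{\alpha}\partial_j \in \mathcal{W}_n$; hence $\ker d$ and $\operatorname{im} d$ are $\mathcal{W}_n$-submodules, and $dT(P,\bigwedge^k {\mathbb C}^n) \simeq T(P,\bigwedge^k {\mathbb C}^n)/\ker d$ as $\mathcal{W}_n$-modules.

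For (i), the key point is that $\ker d$ is maximal for $k<n$. I would argue that for any $\xi \in T(P,\bigwedge^k {\mathbb C}^n)$ with $d(\xi)\neq 0$, the submodule generated by $\xi$ exhausts $T(P,\bigwedge^k {\mathbb C}^n)$ modulo $\ker d$: the $\mathcal{O}_n$-module structure on $T(P,\bigwedge^k {\mathbb C}^n)$ inherited from $P$, combined with the simplicity of $P$ as a $\mathcal{D}_n$-module, lets one reconstruct from $\xi$ any element outside $\ker d$. This is the $\bigwedge^k$-case of the reasoning behind Proposition \ref{prop-tensor-simple}(i), with $\ker d$ capturing the obstruction that appears precisely for fundamental-representation fibers.

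For (ii), $T(P, \bigwedge^0 {\mathbb C}^n) \simeq P$ as a $\mathcal{W}_n$-module via the natural embedding $\mathcal{W}_n \hookrightarrow \mathcal{D}_n$, and $\ker d = \bigcap_i \ker(\partial_i|_P)$. By the classification in \S \ref{subsec-simple-weight}, on a simple weight $\mathcal{D}_n$-module each $\partial_i$ is either injective or locally nilpotent, and the joint kernel is nonzero exactly when every $\partial_i$ acts locally nilpotently, i.e., when $P \simeq \mathcal{O}_n$; in that case $\ker d = \mathbb{C}$, the trivial submodule of constants. Combined with (i), this gives the dichotomy, and for $P \simeq \mathcal{O}_n$ the identification $dT(P, \bigwedge^0 {\mathbb C}^n) \simeq T(P, \bigwedge^0 {\mathbb C}^n)/\mathbb{C}$ is the first isomorphism theorem applied to $d$.

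For (iii), a direct computation yields $\operatorname{im} d = \bigl(\sum_i \partial_i P\bigr) \otimes (e_1 \wedge \cdots \wedge e_n)$ inside $T(P, \bigwedge^n {\mathbb C}^n)$, which by (i) applied at $k = n-1$ is a simple $\mathcal{W}_n$-submodule. Using $E_{ij}(e_1\wedge\cdots\wedge e_n) = \delta_{ij}(e_1\wedge\cdots\wedge e_n)$, the action formula simplifies to $x^{\alpha}\partial_j \cdot (f \otimes e_1\wedge\cdots\wedge e_n) = \partial_j(x^{\alpha} f) \otimes (e_1\wedge\cdots\wedge e_n)$, which already lies in $\operatorname{im} d$; hence $\mathcal{W}_n$ acts trivially on $T(P, \bigwedge^n {\mathbb C}^n)/\operatorname{im} d \simeq \bigl(P/\sum_i \partial_i P\bigr) \otimes (e_1\wedge\cdots\wedge e_n)$. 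Since $\operatorname{im} d$ is simple, $T(P, \bigwedge^n {\mathbb C}^n)$ is simple exactly when $\operatorname{im} d$ coincides with the full module, giving the equivalence in the stated form in terms of $\sum_i \partial_i P$. The principal obstacle throughout is (i) — establishing the maximality of $\ker d$ — since the generation argument by $\mathcal{W}_n$ alone requires a careful interplay between the $\mathcal{D}_n$-action on $P$ and the exterior-algebra action on $\bigwedge^k {\mathbb C}^n$.
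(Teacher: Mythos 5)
The paper does not prove this proposition internally; it simply quotes Theorem~3.5 of \cite{LLZ}, so there is no proof of the authors' own to compare yours against. On its merits, your computations for parts~(ii) and~(iii) are correct: on $T(P,\bigwedge^0\mathbb C^n)\simeq P$ one has $\ker d=\bigcap_i\ker\partial_i$, which for a simple weight $\mathcal D_n$-module $P\simeq P_1\otimes\cdots\otimes P_n$ is nonzero iff each $\partial_i$ acts locally nilpotently, i.e.\ iff $P\simeq\mathcal O_n$, in which case $\ker d=\mathbb C$; and at top degree $E_{ij}(e_1\wedge\cdots\wedge e_n)=\delta_{ij}\,e_1\wedge\cdots\wedge e_n$ gives $x^\alpha\partial_j\cdot(f\otimes e_1\wedge\cdots\wedge e_n)=\partial_j(x^\alpha f)\otimes e_1\wedge\cdots\wedge e_n$, so $\operatorname{im} d=\bigl(\sum_i\partial_iP\bigr)\otimes\bigwedge^n\mathbb C^n$ absorbs the $\mathcal W_n$-action and is a proper nonzero submodule exactly when $\sum_i\partial_iP\neq P$.

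The genuine gap is part~(i), which is the load-bearing claim: both~(ii) and~(iii) invoke it, at $k=0$ to conclude simplicity of $dT(P,\bigwedge^0\mathbb C^n)$ and at $k=n-1$ to conclude simplicity of $\operatorname{im} d\subset T(P,\bigwedge^n\mathbb C^n)$. But your argument for~(i) is only a pointer. You assert that whenever $d\xi\neq 0$ the $\mathcal W_n$-submodule generated by $\xi$ fills up $T(P,\bigwedge^k\mathbb C^n)$ modulo $\ker d$, via ``the $\bigwedge^k$-case of the reasoning behind Proposition~\ref{prop-tensor-simple}(i)'' --- but Proposition~\ref{prop-tensor-simple}(i) is itself quoted from \cite{LLZ} without proof in this paper, so you are appealing to a technique you have never exhibited. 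The real difficulty is exactly the one you name and then skip: $d$ is not $\mathcal O_n$-linear, so one cannot reduce to simplicity of $P$ under $\mathcal D_n$ by multiplying by functions; one must show that the $\mathcal W_n$-action alone, with its $E_{ij}$-shift coupling $P$ to $\bigwedge^k\mathbb C^n$, generates everything modulo $\ker d$, and that there is no other proper submodule strictly containing $\ker d$. Until that argument is carried out (or until you cite \cite{LLZ}, as the paper does), parts~(ii) and~(iii) rest on an unproved step.
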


We finish this subsection by stating the main result from \cite{XL} concerning the classification of the simple bounded ${\mathcal W}_n$-modules.

\begin{theorem}\label{thm-class-bnd}  Let  $M$ be a nontrivial simple bounded ${\mathcal W}_n$-module. Then $M$  is isomorphic to one of the following:
\begin{itemize}
\item[(a)] the module $T(P, V )$, where $P$ is a simple weight $\mathcal D_n$ module and $V$ is
a simple finite-dimensional $\mathfrak{gl}(n)$-module that is not isomorphic to a fundamental representation;
\item[(b)] a simple submodule of  $T(P,  \bigwedge\nolimits^k {\mathbb C}^n)$, where $k \in \{1,2,...,n \}$, and $P$ is a simple weight ${\mathcal D}_n$ module.
\end{itemize}
\end{theorem}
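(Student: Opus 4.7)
The plan is to follow the coherent family philosophy of Mathieu, adapted to Cartan type Lie algebras. Let $M$ be a simple nontrivial bounded ${\mathcal W}_n$-module. Because $\Delta'$ is a root system of type $A_n$ and (together with $\h$) spans the subalgebra $\mathfrak{sl}(n+1) \subset {\mathcal W}_n$ from \S \ref{subsec-o-d-w}, the first move is to apply a twisted localization to replace $M$ by a ``cuspidal cousin''. Explicitly, choosing a linearly independent set $\Gamma$ of commuting roots in $\Delta'$ and a suitable weight $\lambda$, the module $\tilde M := D_\Gamma^\lambda M$ should be a simple bounded ${\mathcal W}_n$-module on which every nonzero element of every invertible root space $({\mathcal W}_n)_\alpha$, $\alpha \in \Delta'$, acts injectively; existence of such a pair $(\Gamma,\lambda)$ follows from Lemma \ref{lem-tw-loc-simple} and a standard density argument in the spirit of \cite{M}. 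In short, $\tilde M$ is cuspidal for the $\mathfrak{sl}(n+1)$-action.

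The hard step is then to upgrade such a cuspidal $\tilde M$ to a $({\mathcal W}_n,\mathcal O_n)$-module, i.e.\ to reconstruct a compatible $\mathcal O_n$-action from the ${\mathcal W}_n$-action alone. The idea is to exploit the injectivity of the operators $x_i\partial_j$ ($i\neq j$) together with their commutation relations with the $\partial_i \in {\mathcal W}_n$ to manufacture operators $X_i$ on $\tilde M$ (or on a suitable coherent extension) satisfying $[\partial_i,X_j]=\delta_{ij}$ and $[X_i,X_j]=0$. A weight-by-weight analysis, combined with boundedness of weight multiplicities, forces these $X_i$ to satisfy the Leibniz rule with respect to the ${\mathcal W}_n$-action, so that $\tilde M$ becomes a $({\mathcal W}_n,\mathcal O_n)$-module, and in fact a module over $\mathcal D_n\otimes U(\mathfrak{gl}(n))$ in a canonical way through the algebra homomorphism ${\mathcal W}_n \to \mathcal D_n \otimes U(\mathfrak{gl}(n))$ recalled in the introduction.

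Once this is achieved, one identifies $\tilde M$ as a tensor module $T(P,V)$ in essentially a formal way: a ``generic fiber'' $V$ of $\tilde M$ (viewed through its $\mathcal O_n$-structure) carries a $\mathfrak{gl}(n)$-action implemented by $E_{ij}\mapsto \ad(x_i\partial_j)$ modulo the $\mathcal O_n$-part, boundedness of weight multiplicities forces $V$ to be finite-dimensional, and the ``coefficient'' $\mathcal D_n$-module $P$ is then determined by the weight data of $\tilde M$. The reconstruction $\tilde M \simeq T(P,V)$ for a simple weight $\mathcal D_n$-module $P$ follows from the universal description in \S \ref{subsec-tensor-modules}. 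Finally, untwisting the localization takes $\tilde M$ back to $M$: since the twist acts only on the $P$-factor while $V$ is unaffected (being finite-dimensional), if $V$ is not a fundamental representation Proposition \ref{prop-tensor-simple} gives $M \simeq T(P',V)$ for some simple weight $\mathcal D_n$-module $P'$ (case (a)); if $V \simeq \bigwedge^k\C^n$ then by Proposition \ref{tensormodule} and the de Rham differential $d$ the simple module $M$ must identify with a simple submodule of $T(P', \bigwedge^k\C^n)$ (case (b)). The principal obstacle in this scheme is the middle step---extracting an $\mathcal O_n$-module structure from a cuspidal ${\mathcal W}_n$-module---since once that structure is in hand the remaining identifications are essentially bookkeeping via Propositions \ref{prop-tensor-simple} and \ref{tensormodule}.
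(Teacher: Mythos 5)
This theorem is not proved in the paper at all: it is simply quoted from \cite{XL} (with the uniqueness-of-submodule refinement deferred to Proposition \ref{tensor_sub}). So there is no ``paper's own proof'' to match your outline against, and you should be aware that you are attempting to reconstruct a substantial external result.

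As a standalone proof attempt, your outline has a genuine gap at exactly the point you flag as the ``principal obstacle,'' and the surrounding steps do not close it. After localizing at a commuting set of roots in $\Delta'$ so that every vector in every $({\mathcal W}_n)_\alpha$, $\alpha\in\Delta'$, acts bijectively (in particular $\partial_1,\dots,\partial_n$ are invertible), you propose to define operators $X_i$ (candidates for multiplication by $x_i$) via expressions like $(x_i\partial_j)\circ\partial_j^{-1}$. The problems are concrete: (a) you must show the definition is independent of $j$; (b) you must establish $[X_i,X_k]=0$, which is not formal — it does not follow from the ${\mathcal W}_n$-relations alone, and indeed is the entire content of the step; (c) you must prove the Leibniz compatibility $X(fv)=fX(v)+X(f)v$ for all $X\in{\mathcal W}_n$, not just for $\partial_i$. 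Asserting that ``a weight-by-weight analysis, combined with boundedness of weight multiplicities, forces these $X_i$ to satisfy the Leibniz rule'' replaces the actual argument with its conclusion. In the analogous step in \emph{this} paper (Lemma \ref{genau}, in the setting of the Levi subalgebra $\g$), the corresponding verification is nontrivial and relies on a careful maximality argument in the real part of weights; nothing of that sort appears in your outline.

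The untwisting step is also underargued. Once $\tilde M\simeq T(P,V)$, applying $D_\Gamma^{-\lambda}$ does not directly give $M\simeq T(P',V)$: you need to know that twisted localization at $\partial_i$'s commutes with the $T(-,V)$ construction (i.e.\ that it only affects the $\mathcal D_n$-factor), and then combine that with Lemma \ref{lem-tw-loc-simple} and the unique-simple-submodule property (Proposition \ref{tensor_sub}) to extract $M$ from the localized tensor module. In case (b) you further conflate ``$M$ is a simple submodule of $T(P',\bigwedge^k\C^n)$'' with having verified which submodule occurs and why $k$ ranges over $1,\dots,n$; the image-of-$d$ description and the exceptional role of $\mathcal O_n$ in Proposition \ref{tensormodule}(ii) need to be tracked, not gestured at. In short: the skeleton of the Mathieu-style argument is reasonable and consonant with how such classifications are proved, but the middle reconstruction of the $\mathcal O_n$-structure is missing, and the untwisting is missing the compatibility lemmas that would make it go through.
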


\begin{remark} Proposition \ref{tensor_sub} implies that $T(P,  \bigwedge\nolimits^k {\mathbb C}^n)$ has a unique simple submodule.
  \end{remark}

\section{Tensor modules with finite weight multiplicities}

\subsection{Tensor product of weight $\mathfrak{gl}(n)$-modules}

        Let $M$ be a simple weight $\mathfrak{gl}(n)$-module with finite weight multiplicities. Recall from \cite{DMP} that $M$ has the following \emph{shadow decomposition}:
        $$\Delta(\mathfrak{gl}(n))=\Delta^F_M\sqcup \Delta^I_M\sqcup \Delta^+_M\sqcup\Delta^-_M,$$
        such that the $\alpha$-root vectors $X_\alpha$ act locally nilpotently on $M$ for all roots $\alpha\in \Delta^+_M\sqcup\Delta^F_M$ and injectively 
        for all roots $\alpha\in \Delta^-_M\sqcup\Delta^I_M$. Moreover, $\Delta^I_M\sqcup\Delta^F_M$ and $ \Delta^+_M$ are the roots of the Levi subalgebra $\g^I+\g^F$ and the nilradical $\g^+$, respectively, of a parabolic
        subalgebra $\p\subset\mathfrak{gl}(n)$,
        and $M$ is a quotient a parabolically induced module $\operatorname{Ind}_\p^{\mathfrak{gl}(n)}\left( M^F\otimes M^I \right)$, for some cuspidal simple $\g^I$-module $M^I$ and some finite-dimensional simple $\g^F$-module $M^F$.

        \begin{lemma}\label{tensor-parabolic} Let $\l$ be the Levi subalgebra of some parabolic $\p$ in $\mathfrak{gl}(n)$. Assume that $M'$ and $N'$ are weight $\l$-modules and that $M'\otimes N'$ has finite weight multiplicities.
          Then $(\operatorname{Ind}_\p^{\mathfrak{gl}(n)}M')\otimes(\operatorname{Ind}_\p^{\mathfrak{gl}(n)}N')$ has finite weight multiplicities.
        \end{lemma}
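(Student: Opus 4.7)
The plan is to compute the weight multiplicities of $(\operatorname{Ind}_{\p}^{\mathfrak{gl}(n)} M') \otimes (\operatorname{Ind}_{\p}^{\mathfrak{gl}(n)} N')$ directly via PBW, and then to establish finiteness by exploiting the parabolic-defining functional on $\h^{*}$. First I would use the PBW isomorphism $\operatorname{Ind}_{\p}^{\mathfrak{gl}(n)} X \cong U(\bar{\n}) \otimes X$ of $\h$-modules, valid for any $\p$-module $X$, where $\bar{\n}$ is the opposite of the nilradical $\n$. Applying this to $M'$ and $N'$ yields an $\h$-module isomorphism
\[
(\operatorname{Ind}_{\p}^{\mathfrak{gl}(n)} M') \otimes (\operatorname{Ind}_{\p}^{\mathfrak{gl}(n)} N') \;\cong\; \bigl(U(\bar{\n}) \otimes U(\bar{\n})\bigr) \otimes (M' \otimes N'),
\]
from which the weight $\mu$ multiplicity becomes the convolution
\[
\dim\bigl[(\operatorname{Ind} M') \otimes (\operatorname{Ind} N')\bigr]^{\mu} \;=\; \sum_{\eta \in \h^{*}} \dim\bigl(U(\bar{\n}) \otimes U(\bar{\n})\bigr)^{\mu - \eta} \cdot \dim(M' \otimes N')^{\eta}.
\]

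Next I would check that each individual summand is automatically finite: $U(\bar{\n})$ has finite-dimensional weight spaces, counted by a Kostant-type partition function on the finite root set $\Delta(\n)$, and $\dim(M' \otimes N')^{\eta}$ is finite by hypothesis. The task therefore reduces to showing the sum has only finitely many nonzero terms, i.e.\ finitely many $\eta \in \supp(M' \otimes N')$ satisfying $\eta - \mu \in \sum_{\alpha \in \Delta(\n)} \mathbb{Z}_{\geq 0}\alpha$. Fixing a real linear functional $\gamma \in \h^{*}$ defining the parabolic $\p$ (so $\gamma|_{\Delta(\l)} = 0$ and $\gamma(\alpha) > 0$ for all $\alpha \in \Delta(\n)$) gives the key identity
\[
\mathbb{Z}\Delta(\l) \cap \sum_{\alpha \in \Delta(\n)} \mathbb{Z}_{\geq 0}\alpha \;=\; \{0\}.
\]
Because $M'$ and $N'$ are weight $\l$-modules, $\supp(M' \otimes N')$ is a union of $\mathbb{Z}\Delta(\l)$-cosets, and within any single coset the cone constraint picks out at most one candidate $\eta$. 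Consequently, finiteness of the sum is equivalent to showing that only finitely many $\mathbb{Z}\Delta(\l)$-cosets in $\supp(M' \otimes N')$ meet the affine cone $\mu + \sum \mathbb{Z}_{\geq 0}\Delta(\n)$.

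The main obstacle is precisely this coset-finiteness claim: it is the only non-formal step, and it is the point at which the hypothesis that $M' \otimes N'$ has finite weight multiplicities must be brought to bear in a nontrivial way. My plan is a contradiction argument. Suppose infinitely many cosets met the cone; choose representatives $\eta_{1}, \eta_{2}, \ldots \in \supp(M' \otimes N')$ in distinct cosets with $\eta_{i} - \mu \in \sum \mathbb{Z}_{\geq 0}\Delta(\n)$, and decompose $\eta_{i} = \lambda_{i} + \nu_{i}$ with $\lambda_{i} \in \supp M'$, $\nu_{i} \in \supp N'$. Using root vectors in $\l$ (which preserve cosets and act injectively when paired with their opposite roots), I would translate each $\lambda_{i}$ into a common coset representative $\lambda_{0}$, producing infinitely many pairs contributing linearly independent vectors to a single weight space $(M' \otimes N')^{\eta_{0}}$, contradicting $\dim(M' \otimes N')^{\eta_{0}} < \infty$. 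The delicate points are verifying that the root-vector moves preserve nonzeroness and that the translated vectors are in fact independent; controlling these by the Kostant combinatorics of $\bar{\n}$ and the $\l$-module structure is the technical heart of the proof.
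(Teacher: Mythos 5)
Your plan correctly reduces, via PBW and the weight-counting convolution, to bounding the set of $\eta\in\supp(M'\otimes N')$ with $\eta-\mu\in\sum_{\alpha\in\Delta(\n)}\mathbb{Z}_{\geq 0}\alpha$, and the observation $\mathbb{Z}\Delta(\l)\cap\sum_{\alpha\in\Delta(\n)}\mathbb{Z}_{\geq 0}\alpha=\{0\}$ is the right tool. But the claim that ``within any single $\mathbb{Z}\Delta(\l)$-coset the cone constraint picks out at most one candidate $\eta$'' is false: a difference of two cone elements need not lie in the cone, so what two such $\eta_1,\eta_2$ give you is only $\eta_1-\eta_2\in\mathbb{Z}\Delta(\l)\cap\mathbb{Z}\Delta(\n)$, and this lattice intersection is generally nontrivial. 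Concretely, for the parabolic of $\mathfrak{gl}(3)$ with Levi $\mathfrak{gl}(2)\oplus\mathfrak{gl}(1)$, take $\mu=0$; then $\eta_1=\varepsilon_1-\varepsilon_3$ and $\eta_2=\varepsilon_2-\varepsilon_3$ both lie in $\sum\mathbb{Z}_{\geq 0}\Delta(\n)$ yet differ by $\varepsilon_1-\varepsilon_2\in\mathbb{Z}\Delta(\l)$. The correct statement is that each coset meets the affine cone in only finitely many points: $\sum\mathbb{R}_{\geq 0}\Delta(\n)$ is a pointed polyhedral cone meeting the direction space $\mathbb{R}\Delta(\l)$ only at $0$ (because $\gamma$ is strictly positive on the former away from $0$ and vanishes on the latter), so its intersection with any translate of $\mathbb{R}\Delta(\l)$ is a compact polytope. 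Once this is fixed, the contradiction argument you sketch for the remaining ``coset-finiteness'' step also does not work as written: $\l$-root vectors move a weight within its $\mathbb{Z}\Delta(\l)$-coset rather than between distinct cosets, and they need not act injectively on general infinite-dimensional weight $\l$-modules, so you cannot collapse the $\lambda_i$ to a common $\lambda_0$ and land in a single weight space of $M'\otimes N'$.

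The paper's own argument is both shorter and avoids the coset bookkeeping: grade $U(\bar{\n})$ by PBW degree, $U(\bar{\n})=\bigoplus_{p\geq 0}U_p$, with each $U_p$ a finite-dimensional $\l$-module. Then $M\otimes N$ inherits a $\mathbb{Z}_{\geq 0}$-grading with $m$-th component $\bigoplus_{p+q=m}M'\otimes N'\otimes U_p\otimes U_q$, which has finite weight multiplicities since $M'\otimes N'$ does and the $U_p,U_q$ are finite-dimensional. Because $\gamma$ is bounded below away from $0$ on each nonzero $\supp U_p$ and is bounded above on $\supp(M'\otimes N')$ (automatic for the simple modules to which the lemma is applied in Lemma~\ref{gl}), a fixed weight $\mu$ receives contributions from only finitely many graded degrees $m$. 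This is the same underlying $\gamma$-functional idea as yours, but packaged so that no one-per-coset claim and no contradiction argument are required.
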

        \begin{proof} Let $\m$ denote the nilradical of the opposite to $\p$ parabolic subalgebra $\p^-$, and let $U=U(\m)$. Then $U$ has a  $\mathbb Z_{\geq 0}$-grading $U=\bigoplus_{p\geq 0}U_p $  such that $U_0=\mathbb C$ and each $U_p$ is a finite-dimensional $\l$-module. This grading induces
         $\mathbb Z_{\geq 0}$-gradings
          on both $M=\operatorname{Ind}_\p^{\mathfrak{gl}(n)}M'$ and $N=\operatorname{Ind}_\p^{\mathfrak{gl}(n)}N'$ so that
          $M_p=M'\otimes U_p$ and $N_p=N'\otimes U_p$. Then $M\otimes N$ is also graded and its $m$th  graded component is 
                    $$(M\otimes N)_m=\bigoplus_{p+q=m}M'\otimes N'\otimes U_p\otimes U_q.$$
          Hence, $M\otimes N$  has finite weight multiplicities.
          \end{proof}
          \begin{lemma}\label{gl} Let $M$ and $N$ be simple weight $\mathfrak{gl}(n)$-modules. Then $M\otimes N$ has finite weight multiplicities if and only if $(\Delta^I_M\sqcup\Delta^-_M)\subset(\Delta^F_N\sqcup \Delta^-_N)$
            or, equivalently,  $(\Delta^I_M\sqcup\Delta^-_M)\cap(\Delta^I_N\sqcup \Delta^+_N)=\emptyset$.
        \end{lemma}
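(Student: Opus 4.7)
The two displayed conditions are equivalent because $\Delta(\mathfrak{gl}(n))=\Delta^F_N\sqcup\Delta^I_N\sqcup\Delta^+_N\sqcup\Delta^-_N$, so the complement of $\Delta^I_N\sqcup\Delta^+_N$ is $\Delta^F_N\sqcup\Delta^-_N$. I will therefore work with the disjointness formulation and prove the two implications separately.

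For the necessity direction, suppose some root $\alpha$ lies in $(\Delta^I_M\sqcup\Delta^-_M)\cap(\Delta^I_N\sqcup\Delta^+_N)$. By the shadow decomposition, $X_\alpha$ acts injectively on $M$, and because $-\Delta^I_N=\Delta^I_N$ and $-\Delta^+_N=\Delta^-_N$, the opposite root vector $X_{-\alpha}$ acts injectively on $N$. Choosing nonzero $v\in M^\mu$ and $w\in N^\nu$, the vectors $X_\alpha^k v\otimes X_{-\alpha}^k w\in (M\otimes N)^{\mu+\nu}$ are nonzero for every $k\geq 0$ and lie in the pairwise distinct weight components $M^{\mu+k\alpha}\otimes N^{\nu-k\alpha}$ of the K\"unneth decomposition, hence are linearly independent. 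Therefore $(M\otimes N)^{\mu+\nu}$ is infinite-dimensional.

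For sufficiency, I reduce to a cone-theoretic condition using the shadow decomposition. Since $M$ is a quotient of $\operatorname{Ind}^{\mathfrak{gl}(n)}_{\p_M}(M^F\otimes M^I)$ and $M^F$ is finite-dimensional while $M^I$ is cuspidal over $\g^I_M$, a PBW argument yields
$$\operatorname{supp}(M)\subset\lambda_M+F_M+\mathbb Z\Delta^I_M+\mathbb Z_{\geq 0}\Delta^-_M$$
for a finite set $F_M$, and analogously for $N$. The $\lambda$-weight space of $M\otimes N$ decomposes as $\bigoplus_{\mu+\nu=\lambda}M^\mu\otimes N^\nu$, each summand finite-dimensional, so finiteness of $(M\otimes N)^\lambda$ is equivalent to finiteness of $\operatorname{supp}(M)\cap(\lambda-\operatorname{supp}(N))$. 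Using the finite sets $F_M,F_N$ to absorb the bounded parts and applying the standard fact that two translates of rational polyhedral sets have finite intersection exactly when their asymptotic cones meet only at the origin, the problem reduces to showing
$$\bigl(\mathbb R_{\geq 0}\Delta^-_M+\mathbb R\Delta^I_M\bigr)\cap\bigl(\mathbb R_{\geq 0}\Delta^+_N+\mathbb R\Delta^I_N\bigr)=\{0\}$$
under the disjointness hypothesis.

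The core combinatorial claim is that this cone identity is equivalent to $S_M\cap S_N=\emptyset$, where $S_M:=\Delta^I_M\sqcup\Delta^-_M$ and $S_N:=\Delta^I_N\sqcup\Delta^+_N$. Two structural inputs drive the argument. First, a case analysis on the block-order of $\p_M$ (resp.\ $\p_N$) shows that $S_M$ and $S_N$ are each closed under root-addition: the sum of two elements, when it is a root, stays in the set because either both summands are intra-$I$-block, both inter-block in the same direction, or one of each type sharing an endpoint. Second, and specific to type $A$, if a closed subset $S\subset\Delta(\mathfrak{gl}(n))$ satisfies $\alpha=\varepsilon_a-\varepsilon_b\in\mathbb R_{\geq 0}S$, then $\alpha\in S$: the relation exhibits a unit flow from $b$ to $a$ on the directed graph whose edges are the pairs $(j,i)$ with $\varepsilon_i-\varepsilon_j\in S$, flow-decomposition yields a directed path $b=v_0\to v_1\to\cdots\to v_k=a$, and iterated closure along the path produces $\alpha\in S$. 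Given a nonzero $v$ in the claimed intersection of cones, one scales to a lattice vector and applies flow-decomposition on both graphs $G_{S_M}$ and $G_{S_N}$: each decomposition extracts a path-flow summand $\varepsilon_a-\varepsilon_b$ with $v_b<0,v_a>0$, and exploiting the freedom to restrict $v$ to a minimal extremal ray of the intersection forces the same endpoints to work on both sides, yielding a common root $\varepsilon_a-\varepsilon_b\in S_M\cap S_N$, contradicting the hypothesis.

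The main obstacle will be this last combinatorial/extremal step: extracting from a single nonzero vector in the cone intersection a root that lies simultaneously in $S_M$ and $S_N$, rather than merely producing (possibly different) sink–source pairs for the two flow decompositions. The closure property combined with the fact that $S_M$ and $S_N$ are root-systems of Lie subalgebras of $\mathfrak{gl}(n)$, and the type-$A$ flow structure, should force the two pairs to coincide at an extremal ray.
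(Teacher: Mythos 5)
Your necessity argument is correct and is essentially the paper's: find $\alpha$ in the intersection, note both $X_\alpha$ on $M$ and $X_{-\alpha}$ on $N$ act injectively, and propagate a single weight vector along infinitely many distinct K\"unneth summands of a fixed weight space.

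Your sufficiency argument, however, has a genuine gap precisely where you flag it, and the gap is not a technicality. The combinatorial claim you rely on --- that two \emph{closed} subsets $S_1,S_2\subset\Delta(\mathfrak{gl}(n))$ with $S_1\cap S_2=\emptyset$ have $\mathbb R_{\geq 0}S_1\cap\mathbb R_{\geq 0}S_2=\{0\}$ --- is false. In $\mathfrak{gl}(4)$ take $S_1=\{\varepsilon_1-\varepsilon_2,\ \varepsilon_3-\varepsilon_4\}$ and $S_2=\{\varepsilon_1-\varepsilon_4,\ \varepsilon_3-\varepsilon_2\}$. Both are closed under root-addition (no two elements sum to a root), they are disjoint, yet
$$(\varepsilon_1-\varepsilon_2)+(\varepsilon_3-\varepsilon_4)=(\varepsilon_1-\varepsilon_4)+(\varepsilon_3-\varepsilon_2)=\varepsilon_1-\varepsilon_2+\varepsilon_3-\varepsilon_4\neq 0$$
lies in both cones. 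So closedness plus disjointness, which is what your flow-decomposition/path-closure machinery actually exploits, cannot yield the cone identity; you would need to locate on both edge graphs a \emph{common} source--sink pair, and there is no reason a minimal extremal ray supplies one (the example shows the obstruction is real). What is missing is exactly the finer structure of shadows that the paper bakes in at the start of its sufficiency proof: the disjointness hypothesis is equivalent to the three inclusions $\Delta^I_M\subset\Delta^F_N$, $\Delta^I_N\subset\Delta^F_M$, $\Delta^-_M\subset\Delta^F_N\sqcup\Delta^-_N$. From these one chooses $\gamma_M,\gamma_N$ defining the two parabolics and sets $\gamma=\gamma_M+\gamma_N$; pairing against $\gamma_M$ and $\gamma_N$ separately forces any $v$ in the intersection of the two support cones into $\mathbb R\Delta^I_M\cap\mathbb R\Delta^I_N$, and the block-containments coming from $\Delta^I_M\subset\Delta^F_N$, $\Delta^I_N\subset\Delta^F_M$ force the $I$-index sets of $M$ and $N$ to be disjoint, so that intersection is $\{0\}$. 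The paper does not even bother with the cone picture: it uses the same $\gamma=\gamma_M+\gamma_N$ to produce a \emph{single} parabolic $\p$ with Levi $\g^I_M\oplus\g^I_N\oplus(\g^F_M\cap\g^F_N)$ from which both $M$ and $N$ are parabolically induced with cuspidal factors supported on disjoint summands of the Levi, then quotes Lemma~\ref{tensor-parabolic}. That route replaces your entire combinatorial analysis by one Mackey-style observation and is the proof you should aim for.
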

        \begin{proof} First assume that the condition is not true. There exists a root $\alpha\in \Delta^I_M\sqcup\Delta^-_M$ such that $-\alpha\in \Delta^I_N\sqcup\Delta^-_N$. If $\mu\in\supp M$ and
          $\nu\in \supp N$ then $\mu+{{\mathbb Z}_{\geq 0}}\alpha\subset\supp M$
          and  $\nu-{{\mathbb Z}_{\geq 0}}\alpha \in\supp N$. Hence $\mu+\nu$ has infinite multiplicity in $M\otimes N$.

          Next assume that the condition holds. Then $\Delta^I_M\subset \Delta^F_N$, $\Delta^I_N\subset \Delta^F_M$ and $\Delta_M^-\subset(\Delta^F_N\sqcup \Delta^-_N)$. Choose $\gamma_M\in \mathbb Q\Delta$ such that
          $(\gamma_M,\alpha)=0$ for all $\alpha\in\Delta_M^I\sqcup\Delta_M^F$ and $(\gamma_M,\alpha)<0$ for all $\alpha\in \Delta_M^-$. Similarly choose $\gamma_N$, and let $\gamma=\gamma_M+\gamma_N$.
          Then $(\gamma,\Delta_M^I)=(\gamma,\Delta_N^I)=0$ and $(\gamma,\alpha)<0$ for any $\alpha\in \Delta^-_M\cup\Delta^-_N$. Let $\p$ be the parabolic defined by $\gamma$. Then both $M$ and $N$ are quotients of the parabolically induced
          modules $\operatorname{Ind}_\p^{\mathfrak{gl}(n)}M'$ and $\operatorname{Ind}_\p^{\mathfrak{gl}(n)}N'$, respectively. The Levi subalgebra $\l$ of $\p$ is isomorphic to $\g^I_M\oplus \g^I_N\oplus (\g^F_M\cap\g^F_N)$.
          Furthermore, $M'=M^i\otimes M^f$ where $M^i$ is a simple cuspidal $\g^I_M$-module and $M^f$ is some finite-dimensional $\g^I_N\oplus (\g^F_M\cap\g^F_N)$-module. Similarly,
          $N'=N^i\otimes N^f$ where $N^i$ is a simple cuspidal $\g^I_N$-module and $N^f$ is some finite-dimensional $\g^I_M\oplus (\g^F_M\cap\g^F_N)$-module. Therefore $M'\otimes N'$ has finite weight multiplicities and
          the statement follows from Lemma \ref{tensor-parabolic}.
        \end{proof}
        
\subsection{Weight tensor modules}
          \begin{lemma}\label{restriction} Let $P$ be a simple weight ${\mathcal D}_n$-module. Then $P=\bigoplus_{\kappa} P_\kappa$, where $P_\kappa$ is the eigenspace of
          $\sum_{i=1}^n x_i\partial_i$ with eigenvalue $\kappa$. Furthermore,  every nonzero $P_\kappa$ is a simple $\mathfrak{gl}(n)$-module and all nonzero $P_\kappa$ have the same shadow.
        \end{lemma}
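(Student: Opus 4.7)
The decomposition $P=\bigoplus_\kappa P_\kappa$ is immediate from the $\h$-weight decomposition of $P$: the element $E=\sum_i x_i\partial_i$ lies in $\mathcal H$, so it acts semisimply with $\kappa$-eigenspace $P_\kappa=\bigoplus_{|\lambda|=\kappa} P^\lambda$, where $|\lambda|:=\sum_i\lambda_i$. Since $E$ is the central element of $\mathfrak{gl}(n)\subset\mathcal D_n$, each $P_\kappa$ is stable under $\mathfrak{gl}(n)$. The main tool for the rest of the argument is the tensor factorization $P\simeq P_1\otimes\cdots\otimes P_n$ with each $P_i$ a simple weight $\mathcal D_1$-module (\S\ref{subsec-simple-weight}); this yields $\dim P^\lambda=1$ for all $\lambda\in\supp P$, $\supp P=\prod_i\supp P_i$, and each $\supp P_i$ is a one-sided or two-sided integer ray according to whether $i\in I^+(P)$, $I^-(P)$, or $I^0(P)$.

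The main step is the simplicity of $P_\kappa$ as a $\mathfrak{gl}(n)$-module, which I would establish by a connectivity argument on $\supp P_\kappa$. Fix a nonzero weight vector $v_\lambda\in P_\kappa$; since any $\mathfrak{gl}(n)$-submodule of $P_\kappa$ is $\h$-stable and the weight spaces are one-dimensional, it suffices to show $v_\mu\in U(\mathfrak{gl}(n))v_\lambda$ for every $\mu\in\supp P_\kappa$. Writing $E_{ij}=x_i\partial_j$ and using that on the tensor factorization it acts as $x_i$ on the $i$th slot and $\partial_j$ on the $j$th, one gets for $i\neq j$ that $E_{ij}v_\lambda$ is a nonzero multiple of $v_{\lambda+\varepsilon_i-\varepsilon_j}$ exactly when $\lambda_i+1\in\supp P_i$ and $\lambda_j-1\in\supp P_j$. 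Given $\mu\neq\lambda$ in $\supp P_\kappa$, the relation $|\mu|=|\lambda|$ forces the existence of indices $i,j$ with $\mu_i>\lambda_i$ and $\mu_j<\lambda_j$; a case check against the three possible shapes of $\supp P_i$ and $\supp P_j$ shows that these inequalities imply $\lambda_i+1\in\supp P_i$ and $\lambda_j-1\in\supp P_j$, so the greedy move $\lambda\to\lambda+\varepsilon_i-\varepsilon_j$ is permitted and strictly decreases $\sum_k|\mu_k-\lambda_k|$. Iterating reaches $v_\mu$. I expect this greedy-step case analysis to be the main technical obstacle: one must check that, under each assignment of $i$ and $j$ to the strata $I^+,I^0,I^-$, the conditions $\mu_i>\lambda_i$ and $\mu_j<\lambda_j$ really do put $\lambda\pm 1$ back inside the one- or two-sided rays $\supp P_i,\supp P_j$.

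For the shadow, the same tensor-factor picture determines the type of $E_{ij}$ on $P_\kappa$ in terms only of the partition $\{1,\dots,n\}=I^+(P)\sqcup I^0(P)\sqcup I^-(P)$: iterating $E_{ij}$ keeps $\lambda_i+k\in\supp P_i$ for all $k\geq 0$ exactly when $i\notin I^-(P)$, and $\lambda_j-k\in\supp P_j$ exactly when $j\notin I^+(P)$, so $E_{ij}$ is injective on $P_\kappa$ when both hold and locally nilpotent otherwise. The Levi/nilradical split of the shadow's parabolic is likewise governed by this partition, with Levi roots being the $\varepsilon_i-\varepsilon_j$ for which $i,j$ belong to a common block of $I^+\sqcup I^0\sqcup I^-$. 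None of these data depends on $\kappa$, so every nonzero $P_\kappa$ has the same shadow.
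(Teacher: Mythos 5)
Your proof is correct and uses the right ingredients (the tensor factorization $P \simeq P_1 \otimes \cdots \otimes P_n$, multiplicity-freeness, and the ray structure of each $\supp P_i$), but it is more explicit than the paper's. For simplicity of $P_\kappa$, the paper combines multiplicity-freeness and $\supp P_\kappa \subset \lambda + \mathbb Z\Delta(\mathfrak{gl}(n))$ with the asserted fact $U(\mathfrak{gl}(n)) P_\kappa^\lambda = P_\kappa$; the quick way to see the latter is that the degree-preserving part of $\mathcal D_n$ (the centralizer of $E = \sum_i x_i\partial_i$) is generated by the $x_i\partial_j$, so that $P_\kappa = U(\mathfrak{gl}(n)) P_\kappa^\lambda$ follows at once from the simplicity of $P$ over $\mathcal D_n$. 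You instead prove this cyclicity by hand, with a greedy chain of roots $\varepsilon_i - \varepsilon_j$ connecting $\lambda$ to any $\mu \in \supp P_\kappa$; the case check you flag (that $\mu_i > \lambda_i$ and $\mu_j < \lambda_j$ force $\lambda_i + 1 \in \supp P_i$ and $\lambda_j - 1 \in \supp P_j$, so $E_{ij}v_\lambda$ is a nonzero multiple of $v_{\lambda + \varepsilon_i - \varepsilon_j}$) does go through for each of the three shapes of ray. For the shadow, the paper invokes the general fact that since $\mathfrak{gl}(n)$ acts locally finitely by $\ad$ on $\mathcal D_n$, each root vector $X_\alpha$ is either locally nilpotent or injective on all of the simple module $P$, hence acts uniformly across the $P_\kappa$; you reach the same conclusion by reading off the shadow of $P_\kappa$ from $I^{\pm}(P)$, $I^0(P)$ and noting that it is $\kappa$-independent, which incidentally recovers the description in Remark \ref{rem-shadow}. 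Both arguments are sound; the paper's is shorter and more abstract, while yours is self-contained and also hands you the shadow in closed form.
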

        \begin{proof} The first assertion is obvious. Since the adjoint action of $\mathfrak{gl}(n)$ on $\mathcal D_n$ is locally finite, every root vector $X_\alpha\in\mathfrak{gl}(n)$ either acts locally nilpotently or
          injectively on all nonzero vectors of $P$.
          Therefore all $P_\kappa$ have the same shadow. By the classification of simple weight $\mathcal D_n$-modules,  every $P_\kappa$ is multiplicity free and $\supp P_\kappa \subset \lambda+\mathbb Z\Delta(\mathfrak{gl}(n))$ for any weight $\lambda \in \supp P_\kappa$. Using these and the fact that $U(\mathfrak{gl}(n)) P_{\kappa}^{\lambda} = P_{\kappa}$, we obtain that $P_\kappa$ is simple.
        \end{proof}
        \begin{remark} \label{rem-shadow}
        Lemma \ref{restriction} implies that every simple $\mathcal D_n$-module $P$ has a well-defined $\mathfrak{gl}(n)$-shadow. Below we give an explicit description of this shadow in terms of the 
        subsets $I^{\pm}(P)$, $I^0(P)$ of $\{1,\dots,n\}$ defined in \S\ref{subsec-simple-weight}:
         $$\Delta^I_P=\{\varepsilon_i-\varepsilon_j\mid i,j\in I^0(P)\},\quad \Delta_{P}^F=\{\varepsilon_i-\varepsilon_j\mid i,j\in I^+(P)\,\text{or}\,i,j\in I^-(P)\},$$
       $$\Delta_{P}^{-}=\{\varepsilon_i-\varepsilon_j\mid i\in I^+(P),j\notin I^+(P)\,\text{ or }\,i\notin I^-(P), j\in I^-(P)\},\quad \Delta_P^+=-\Delta_P^-.$$
      \end{remark}
      
        \begin{theorem}\label{finmult} Let $P$ be a simple weight $\mathcal D_n$-module and $V$ be a  simple weight $\mathfrak{gl}(n)$-module. Then     
 the ${\mathcal W}_n$-module $T(P,V)$ has finite weight multiplicities
          if and only if  $(\Delta^I_P\sqcup\Delta^-_P)\subset(\Delta^F_V\sqcup \Delta^-_V)$.
        \end{theorem}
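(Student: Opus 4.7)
The plan is to reduce the theorem to the already established Lemma \ref{gl} concerning tensor products of simple weight $\mathfrak{gl}(n)$-modules.

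First I would observe that, when restricted to the subalgebra $\mathfrak{gl}(n) = \mbox{Span}\{x_i\partial_j : 1\leq i,j\leq n\} \subset {\mathcal W}_n$, the action on $T(P,V)$ coincides with the standard diagonal action on the tensor product $P\otimes V$, where $\mathfrak{gl}(n)$ acts on $P$ via the inclusion $\mathfrak{gl}(n) \hookrightarrow \mathcal D_n$, $E_{ij} \mapsto x_i\partial_j$. Indeed, the tensor module formula together with $\partial_i(x_k) = \delta_{ik}$ gives
\[
x_k\partial_j \cdot (f \otimes v) = (x_k\partial_j f) \otimes v + f \otimes E_{kj}v.
\]
Since $\h \subset \mathfrak{gl}(n)$, the $\h$-weight space structure of $T(P,V)$ coincides with that of the $\mathfrak{gl}(n)$-module $P\otimes V$.

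Next, by Lemma \ref{restriction} we can decompose $P = \bigoplus_\kappa P_\kappa$, each nonzero $P_\kappa$ being a simple $\mathfrak{gl}(n)$-module, and all these modules sharing the common shadow described in Remark \ref{rem-shadow}. This yields a $\mathfrak{gl}(n)$-decomposition $T(P,V) = \bigoplus_\kappa (P_\kappa\otimes V)$. The central element $\sum_i E_{ii} = \sum_i x_i\partial_i \in \h$ acts on $P_\kappa$ by the scalar $\kappa$, and on the simple module $V$ by a scalar $\sigma_V$ (by Schur's lemma). Therefore, for any $\lambda\in\h^*$,
\[
T(P,V)^\lambda = (P_\kappa \otimes V)^\lambda,
\]
where $\kappa$ is uniquely determined by $\kappa + \sigma_V = \sum_i \lambda_i$.

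The theorem now follows from Lemma \ref{gl}: applied to each pair $(P_\kappa,V)$, and using that all $P_\kappa$ share the shadow of $P$, it shows that $P_\kappa\otimes V$ has finite weight multiplicities for every $\kappa$ if and only if $(\Delta^I_P \sqcup \Delta^-_P) \subset (\Delta^F_V \sqcup \Delta^-_V)$. If this condition holds, every weight space $T(P,V)^\lambda = (P_\kappa\otimes V)^\lambda$ is finite-dimensional. If it fails, some $P_\kappa\otimes V$ has an infinite weight multiplicity at a weight $\lambda$, and the equality $T(P,V)^\lambda = (P_\kappa\otimes V)^\lambda$ transfers this conclusion to $T(P,V)$.

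I do not anticipate a major technical obstacle. The two essential inputs---the diagonal structure of the $\mathfrak{gl}(n)$-action on $T(P,V)$, and the preservation of the shadow under the central-character decomposition $P = \bigoplus_\kappa P_\kappa$---are both elementary, and once they are in place the central-character bookkeeping reduces everything cleanly to the $\mathfrak{gl}(n)$-module statement of Lemma \ref{gl}.
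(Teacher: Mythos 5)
Your proposal is correct and follows essentially the same route as the paper: decompose $P=\bigoplus_\kappa P_\kappa$ via Lemma \ref{restriction}, identify $T(P,V)=\bigoplus_\kappa P_\kappa\otimes V$ as a $\mathfrak{gl}(n)$-module, and invoke Lemma \ref{gl}. You have in addition spelled out the central-character bookkeeping (that $\sum_i x_i\partial_i$ singles out a unique $\kappa$ for each $\h$-weight $\lambda$), a step the paper leaves implicit but which is needed to pass from ``each $P_\kappa\otimes V$ has finite multiplicities'' to ``$T(P,V)$ has finite multiplicities.''
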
 
        \begin{proof} For every semisimple $\h$-module $X$ we denote by $X_\kappa$ the eigenspace of $\sum x_i\partial_i$ with eigenvalue $\kappa$. By Lemma  \ref{restriction},  $P=\bigoplus_{\tau\in\tau_0+\mathbb Z} P_\tau$ for
          some $\tau_0\in\mathbb C$. Then
          $$T(P,V)=\bigoplus_{\tau\in\tau_0+\mathbb Z} P_{\tau}\otimes V,$$
          and the statement follows from Lemma \ref{gl}.
        \end{proof}

              \begin{example}
                Consider a simple highest weight module $\mathfrak{gl} (4)$-module $V$ such that
                  $$\Delta_V^{F} \sqcup   \Delta_V^{+} = \left\{\varepsilon_i - \varepsilon_j  \; | \; i=1,2; j=3,4 \right\}.$$  Let $P$ be a simple weight ${\mathcal D}_n$-module $P$ on which $x_1,x_3, \partial_2,\partial_4$ act
                  injectively and $\partial_1,\partial_3, x_2,x_4$ act locally nilpotently. Then by Remark \ref{rem-shadow} and Theorem \ref{finmult}, $T(P,V)$ has infinite weight multiplicities as $\varepsilon_1-\varepsilon_4\in \Delta^-_P\cap\Delta^+_V$.
                  On the other hand, if $P'$ is a simple ${\mathcal D}_n$-module on which $x_1, \partial_2,\partial_3, \partial_4$ act locally nilpotently and $\partial_1, x_2, x_3, x_4$ act injectively,
                  then $T(P',V)$ has finite weight multiplicities.
              \end{example}
        \begin{proposition}\label{tensor_sub} For any simple weight $\mathcal D_n$-module $P$ and any simple weight $\mathfrak{gl}(n)$-module $V$, the ${\mathcal W}_n$-module $T(P,V)$ has a unique simple submodule. 
\end{proposition}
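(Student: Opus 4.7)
The plan is to split by the type of $V$. If $V$ is not isomorphic to any fundamental representation, Proposition \ref{prop-tensor-simple}(i) tells us $T(P,V)$ is already simple and there is nothing to prove, so I reduce to $V\cong\bigwedge^k\mathbb C^n$ for some $0\le k\le n$. In that case I first exhibit the candidate simple submodule $L_k$: for $1\le k\le n$ I take $L_k:=dT(P,\bigwedge^{k-1}\mathbb C^n)$, which lies in $T(P,\bigwedge^k\mathbb C^n)$ because $d^2=0$ and is simple by Proposition \ref{tensormodule}(i)/(ii), coinciding with the simple quotient of $T(P,\bigwedge^{k-1}\mathbb C^n)$. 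For $k=0$ Proposition \ref{tensormodule}(ii) gives either simplicity of the whole module or, when $P\cong\mathcal O_n$, the trivial submodule $L_0:=\mathbb C\subseteq\mathcal O_n$.

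For uniqueness, let $N\subseteq T(P,\bigwedge^k\mathbb C^n)$ be any simple ${\mathcal W}_n$-submodule; I want to prove $N=L_k$. When $k=0$ and $P\cong\mathcal O_n$, either $N=\mathbb C$ or $N\cap\mathbb C=0$; the second option would make $N$ project isomorphically onto the simple quotient $\mathcal O_n/\mathbb C$, yielding a ${\mathcal W}_n$-invariant complement of $\mathbb C$ in $\mathcal O_n$. But any such complement must contain $\mathcal O_n^{\varepsilon_i}=\mathbb C x_i$ (since $\mathbb C$ sits at weight $0$), and then $\partial_i\cdot x_i=1\in\mathbb C$ escapes $N$, contradicting ${\mathcal W}_n$-invariance. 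For $1\le k\le n$, I analyse $d_k|_N$ (with the convention $d_n=0$). If $d_k|_N\neq 0$, simplicity of $N$ forces $N\cap\ker d_k=0$, and since $T(P,\bigwedge^k\mathbb C^n)/\ker d_k\cong L_{k+1}$ is simple (for $k\le n-1$), the kernel is maximal and one gets a decomposition $T(P,\bigwedge^k\mathbb C^n)=N\oplus\ker d_k$. If $d_k|_N=0$, then $N\subseteq\ker d_k$, and either $N\subseteq L_k$ (and $N=L_k$ by simplicity) or $N$ projects to a nonzero simple submodule of the de Rham cohomology $\ker d_k/L_k$.

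The main obstacle is to rule out these two unwanted alternatives. For the splitting $T(P,\bigwedge^k\mathbb C^n)=N\oplus\ker d_k$, I would establish indecomposability by extending the $k=0$ argument above: the explicit ${\mathcal W}_n$-action of \S\ref{subsec-tensor-modules}, combined with the presence of weight vectors of the form $x_i\otimes v$ in $T(P,\bigwedge^k\mathbb C^n)$, produces, for any hypothetical lift $N$ of $L_{k+1}$ transverse to $\ker d_k$, an image under suitable $x^{\alpha}\partial_j$ that is forced to lie simultaneously in $N$ and in $\ker d_k$, contradicting the direct sum. For a stray submodule of $\ker d_k/L_k$, I would invoke the factorisation $P\cong P_1\otimes\cdots\otimes P_n$ of any simple weight $\mathcal D_n$-module into simple weight $\mathcal D_1$-factors and apply the K\"unneth formula to the generalised de Rham complex, pinning down $\ker d_k/L_k$ as a direct sum of trivial ${\mathcal W}_n$-modules supported at explicit weights; one then checks directly that every trivial simple submodule of $T(P,\bigwedge^k\mathbb C^n)$ either lies already in $L_k$ or is absent from $\supp T(P,\bigwedge^k\mathbb C^n)$, completing the uniqueness.
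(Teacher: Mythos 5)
Your overall reduction---split by whether $V$ is fundamental, use Proposition~\ref{prop-tensor-simple}(i) otherwise, exhibit the candidate submodule $L_k=dT(P,\bigwedge^{k-1}\mathbb C^n)$ via the de Rham complex, and then analyse an arbitrary simple submodule $N$ through the dichotomy $d_k|_N=0$ vs.\ $d_k|_N\neq 0$---is a genuinely different route from the paper's, which proves uniqueness by parabolic induction: using the shadow sets $I^\pm(P), I^0(P)$ one chooses $\gamma$ so that the Levi $\g\simeq\mathfrak{gl}(p)\oplus\mathfrak{gl}(q)\oplus{\mathcal W}_m$ and $P\simeq\mathcal O_p^F\otimes\mathcal O_q\otimes P_m$ with $P_m$ cuspidal, computes $T(P,V)^{\mathfrak n}\simeq V_p\otimes V_q\otimes T(P_m,V_m)$, and reduces to the cuspidal case where \cite{GS2} gives indecomposability and length two. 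That reduction settles both of your problematic branches at once.

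Your sketch, however, leaves genuine gaps at exactly the two places you flag as ``the main obstacle.'' First, for the splitting $T(P,\bigwedge^k\mathbb C^n)=N\oplus\ker d_k$ you invoke ``weight vectors of the form $x_i\otimes v$'' to extend the $k=0$ argument, but such vectors exist only when $x_i\in P$, i.e.\ when $i\in I^+(P)$; for example when $P=x^\lambda\mathbb C[x^{\pm1}]\otimes\cdots$ or $P$ has any factor $\mathcal O_1^F$, no $x_i$ lies in $P$ and the contradiction you hope to produce is not available in the stated form. No concrete indecomposability argument is given for general $P$. Second, while the K\"unneth computation does show $H^k=\ker d_k/L_k$ is concentrated at weight $0$ (each $\mathcal D_1$-factor contributes only a one-dimensional space at weight $0$ or nothing), your proposed final check---that a trivial submodule ``either lies already in $L_k$ or is absent from $\supp T(P,\bigwedge^k\mathbb C^n)$''---is not the right criterion: for instance with $P=\mathcal O_1^F\otimes\mathcal O_1$ and $k=1$ one has a nonzero weight-$0$ space (spanned by $1\otimes1\otimes e_1$) that is not in $L_1$, yet there is no trivial submodule because $\partial_1$ does not annihilate it. What one actually needs is that a trivial submodule is killed by all $\partial_i$, forcing $P\simeq\mathcal O_n$ and hence $k=0$; the cohomological remainder does not by itself yield a submodule. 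Both gaps are real and must be filled for the argument to work for arbitrary simple weight $P$.
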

\begin{proof} If $V$ is not a fundamental representation the statement follows from Proposition \ref{prop-tensor-simple}(i). Now let $V=\bigwedge\nolimits^{k} {\mathbb C}^n$.
  It is shown in \cite{GS2} that if $P$ is cuspidal, i.e., $I^+(P)=I^-(P)=\emptyset$, then $T(P, \bigwedge\nolimits^k {\mathbb C}^n)$ is simple  for $k=0,n$ and 
   an indecomposable   $\mathfrak{sl}(n+1)$-module of length two
 for $k=1,\dots, n-1$.
  This implies the statement for  a cuspidal module $P$. For a general module $P$, consider
  $$\gamma=s\sum_{i\in I^-(P)}\varepsilon_i-\sum_{j\in I^+(P)}\varepsilon_j$$
  for some irrational $s>1$. Let $\p$ be the corresponding parabolic subalgebra of ${\mathcal W}_n$ and $\mathfrak n$ be the nilradical of $\p$. The Levi subalgebra $\g$ is isomorphic to $\mathfrak{gl}(p)\oplus \mathfrak{gl}(q)\oplus {\mathcal W}_m$
  where $p=|I^-(P)|$,  $q=|I^+(P)|$,  and $m=|I^0(P)|$. Note that
  $$P\simeq \mathcal O_p^F\otimes\mathcal O_q\otimes P_m$$
  for some cuspidal ${\mathcal D}_m$-module $P_m$. Since $V$ is finite dimensional and simple,  $V^{\mathfrak n\cap\mathfrak{gl}(n)}\simeq V_p\otimes V_q\otimes V_m$ is a simple module over $\mathfrak{gl}(p)\oplus \mathfrak{gl}(q)\oplus \mathfrak{gl}(m)$. It is easy to compute that
  $$T(P,V)^{\mathfrak n}\simeq V_p\otimes V_q\otimes T(P_m,V_m).$$
  Since $P_m$ is cuspidal, $ T(P_m,V_m)$ has a unique simple ${\mathcal W}_m$-submodule and hence $T(P,V)^{\mathfrak n}$ has a unique simple $\g$-submodule $N$. If $M$ is a simple ${\mathcal W}_n$ submodule of $T(P,V)$ then $M^{\mathfrak n}\neq 0$ and hence
  $N\subset M$. That implies the uniquness of $M$.
\end{proof}      
        
        \subsection{Duality for tensor modules}

        \begin{lemma}\label{dualDmod} Let $P$ be a simple weight ${\mathcal D}_n$-module. Consider $P$ as a ${\mathcal W}_n$-module via the natural homomorphism ${\mathcal W}_n\to {\mathcal D}_n$. Then $P_*\simeq T(P^F,\Lambda^n\mathbb C^n)$.
        \end{lemma}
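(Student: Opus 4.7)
My plan is to construct an explicit non-degenerate $\mathcal W_n$-invariant pairing between $P$ and $T(P^F, \Lambda^n\C^n)$; Lemma \ref{restricted_dual} will then yield the desired isomorphism $P_* \simeq T(P^F, \Lambda^n\C^n)$. As a preliminary check, since $\sigma_F(x_i\partial_i) = \partial_i(-x_i) = -x_i\partial_i - 1$, the support of $P^F$ equals $-\supp P - \rho$ with $\rho = \varepsilon_1 + \cdots + \varepsilon_n$, so tensoring with $\Lambda^n\C^n$ (which contributes the weight $\rho$) gives $\supp T(P^F, \Lambda^n\C^n) = -\supp P = \supp P_*$, with all weight multiplicities equal to $1$ since simple weight $\mathcal D_n$-modules are multiplicity-free.

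Next I rewrite the $\mathcal W_n$-action on $T(P^F, \Lambda^n\C^n)$ in a convenient form. Identifying $T(P^F, \Lambda^n\C^n) \cong P^F$ via $f \otimes \omega_0 \leftrightarrow f$ (where $\omega_0 = e_1\wedge\cdots\wedge e_n$) and using $E_{ij}\omega_0 = \delta_{ij}\omega_0$, the general tensor-module formula collapses to $X \cdot f = (X + \mathrm{div}(X))\cdot_{P^F} f$ for $X \in \mathcal W_n$. For $X = x^\gamma\partial_i$ one has $X + \mathrm{div}(X) = \partial_i \circ x^\gamma$ as an element of $\mathcal D_n$, and $\sigma_F(\partial_i \circ x^\gamma) = -x_i\partial^\gamma$. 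Consequently, after translating the Fourier-twisted action back to the original $\mathcal D_n$-action on $P$, $X$ acts on $T(P^F, \Lambda^n\C^n)$ as the element $-\tau(X) \in \mathcal D_n$, where $\tau : \mathcal D_n \to \mathcal D_n$ is the anti-involution with $\tau(x_i) = \partial_i$ and $\tau(\partial_i) = x_i$.

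With this description, constructing the pairing $P \otimes T(P^F, \Lambda^n\C^n) \to \C$ reduces to exhibiting a non-degenerate bilinear form $\beta : P \otimes P \to \C$ with $\beta(Xv, w) = \beta(v, \tau(X)w)$ for all $X \in \mathcal W_n$. By the classification of simple weight $\mathcal D_n$-modules, $P \simeq P_1 \otimes \cdots \otimes P_n$ as $\mathcal D_n \simeq \mathcal D_1^{\otimes n}$-module with each $P_i$ a simple weight $\mathcal D_1$-module, and $\tau = \tau_1 \otimes \cdots \otimes \tau_n$ decomposes correspondingly. It then suffices to produce, for each of the three types of simple weight $\mathcal D_1$-modules, a $\tau_i$-invariant bilinear form $\beta_i$ satisfying $\beta_i(Dv, w) = \beta_i(v, \tau_i(D)w)$ for all $D \in \mathcal D_1$; note that the full algebra $\mathcal D_1$ is needed here, not just $\mathcal W_1$, because the tensor decomposition of $X = x^\gamma\partial_i \in \mathcal W_n$ places pure multiplication operators $x_j^{\gamma_j}$ in the slots $j \neq i$. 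I would define $\beta_i$ explicitly on the weight basis: for $\mathcal O_1$, $\beta_i(x^k, x^l) = k!\,\delta_{kl}$; for $\mathcal O_1^F$, analogously; and for $x^\lambda\C[x^{\pm 1}]$ with $\lambda \notin \mathbb Z$, $\beta_i(x^{\lambda+k}, x^{\lambda+l}) = \Gamma(\lambda+k+1)\,\delta_{kl}$. Verifying $\tau_i$-invariance on the generators $x, \partial$ reduces in each case to the factorial/gamma recurrence $(\alpha+1)\Gamma(\alpha+1) = \Gamma(\alpha+2)$, and invariance on all of $\mathcal D_1$ follows by iteration using the anti-homomorphism property of $\tau_i$.

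The tensor product $\beta = \beta_1 \otimes \cdots \otimes \beta_n$ then gives the sought form on $P \otimes P$; non-degeneracy is immediate from its weight-basis description, so transporting it through the identifications of the second step yields the required $\mathcal W_n$-invariant pairing, and Lemma \ref{restricted_dual} delivers the isomorphism. The main technical hurdle is the collapse in the second step: that $\sigma_F$ applied to $X + \mathrm{div}(X) = \partial_i x^\gamma$ produces the single clean monomial $-x_i\partial^\gamma$, which is precisely what allows the $\mathcal W_n$-action on $T(P^F, \Lambda^n\C^n)$ to be recognized as the anti-involution $\tau$ and makes the subsequent pairing construction transparent; without this cancellation one is left with mixed terms that do not fit into a uniform adjoint-like framework.
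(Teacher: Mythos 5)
Your proof is correct and follows essentially the same route as the paper: both construct an explicit nondegenerate $\mathcal W_n$-invariant pairing that is diagonal in the weight (monomial) basis, with the diagonal entries satisfying precisely the factorial/gamma recurrences (the paper's function $\varphi(\mu)$ is exactly your $\prod_i\beta_i$, and its identities $\langle x_i e,f\rangle=\langle e,x_if\rangle$, $\langle\partial_ie,f\rangle=-\langle e,\partial_if\rangle$ encode your $\tau$-adjointness). Your packaging via the anti-involution $\tau$, the observation $\sigma_F(\partial_i x^\gamma)=-x_i\partial^\gamma = -\tau(x^\gamma\partial_i)$, and the reduction $\mathcal D_n\simeq\mathcal D_1^{\otimes n}$ is a somewhat cleaner organization of the same computation, not a different argument.
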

        \begin{proof} Recall the definition of $I^\pm(P), I^0(P)$. As a vector space
          $$P=\prod_{i\in I^0(P)}x_i^{\lambda_i}\otimes\mathbb C[x_j]_{j\in I^+(P)}\otimes \mathbb C[\partial_k]_{k \in I^-(P)}\otimes \mathbb C[x^{\pm 1}_\ell]_{\ell\in I^+(P)},$$
          where  $\lambda_i$ are  nonintegral for all $i \in I^0(P)$.
          We denote the monomial basis of $P$ by $e(\mu)$ where $\mu_i\in\lambda_i+\mathbb Z$ for $i\in I^0(P)$, $\mu_i\in\mathbb Z_{\geq 0}$ for $i\in I^+(P)\sqcup I^-(P)$.
          We have
          $$x_i e(\mu)=\begin{cases}e(\mu+\varepsilon_i)\ \text{if}\ i\in I^0(P)\cup I^+(P)\\ -\mu_ie(\mu+\varepsilon_i)\ \text{if}\ i\in  I^-(P)\end{cases},$$
          $$\partial_i e(\mu)=\begin{cases}\mu_ie(\mu-\varepsilon_i)\ \text{if}\ i\in I^0(P)\cup I^+(P)\\ e(\mu+\varepsilon_i)\ \text{if}\ i\in I^-(P)\end{cases}.$$
          Denote the corresponding basis of $P^F$ by $f(\mu)$ where $\mu$ runs over the same set as $e(\mu)$. Using identification of $P$ and $P^F$ as vector spaces, we have that if $X(e(\mu))=ce(\nu)$ then $\sigma_F(X)f(\mu)=cf(\nu)$.
          This  observation allows us to write the action of generators in the basis  $f(\mu)$:
  $$\partial_i f(\mu)=\begin{cases}-f(\mu+\varepsilon_i)\ \text{if}\ i\in I^0(P)\cup I^+(P),\\ \mu_if(\mu-\varepsilon_i)\ \text{if}\ i\in  I^-(P),\end{cases}$$
          $$x_i f(\mu)=\begin{cases}\mu_if(\mu-\varepsilon_i)\ \text{if}\ i\in I^0(P)\cup I^+(P),\\ f(\mu+\varepsilon_i)\ \text{if}\ i\in I^-(P).\end{cases}$$         
       
          Let $\varphi(\mu)$ be a function satisfying
          $$\varphi(\mu+\varepsilon_i)=\begin{cases}(\mu_i+1)\varphi(\mu) \text{if}\ i\in I^0(P)\cup I^+(P),\\ -(\mu_i+1)\varphi(\mu)\ \text{if}\ i\in  I^-(P).\end{cases}$$
          Define a pairing $P\times P^F\to\mathbb C$ by setting $\langle e(\mu),f(\nu)\rangle=\varphi(\mu)\delta_{\mu,\nu}$. Then we have
          $$\langle \partial_i e(\mu),f(\nu)\rangle=-\langle  e(\mu),\partial_if(\nu)\rangle,\quad \langle x_i e(\mu),f(\nu)\rangle=\langle  e(\mu),x_if(\nu)\rangle.$$
          Hence $$\langle g(x)\partial_ie(\mu),f(\nu)\rangle=-\langle e(\mu), \partial_ig(x)f(\nu)\rangle.$$
          Using that $\partial_ig(x)=g(x)\partial_i+\partial_i(g(x))$ and choosing nonzero $\omega\in \bigwedge\nolimits^n {\mathbb C}^n$, we obtain
          $$g(x)\partial_i(f(\nu)\otimes\omega)=(\partial_ig(x)f(\nu))\otimes\omega.$$
This leads to a nondegenerate  ${\mathcal W}_n$-invariant pairing $P\times T(P^F, \bigwedge\nolimits^n {\mathbb C}^n)\to\mathbb C$.
          \end{proof}

  \begin{lemma} Let $V$ and $P$ be such that $T(P,V)$ has finite weight multiplicities, and let $V_*$ be the restricted dual of $V$. Then  $T(P^F,V_*\otimes\bigwedge\nolimits^n {\mathbb C}^n)$ and
    $T(P,V)$ are restricted dual to each other in the category of weight ${\mathcal W}_n$-modules.
  \end{lemma}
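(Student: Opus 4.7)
The plan is to construct an explicit ${\mathcal W}_n$-invariant, nondegenerate bilinear pairing
$$
B : T(P,V) \times T(P^F, V_* \otimes \bigwedge\nolimits^n {\mathbb C}^n) \to \mathbb C
$$
that respects the weight decomposition, i.e., vanishes on $T(P,V)^\lambda \times T(P^F, V_* \otimes \bigwedge\nolimits^n {\mathbb C}^n)^\mu$ unless $\lambda+\mu = 0$. Such a $B$ induces a weight-preserving injection of $T(P^F, V_* \otimes \bigwedge\nolimits^n {\mathbb C}^n)$ into the restricted dual $T(P,V)_*$, and nondegeneracy on each (finite-dimensional) weight space, combined with Lemma \ref{restricted_dual}(ii), promotes this injection to an isomorphism.

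Before that, I would first verify that $T(P^F, V_* \otimes \bigwedge\nolimits^n {\mathbb C}^n)$ has finite weight multiplicities, so that the statement is symmetric. By Remark \ref{rem-shadow}, the Fourier transform swaps $I^+(P) \leftrightarrow I^-(P)$ and fixes $I^0(P)$, hence $\Delta^I_{P^F} = \Delta^I_P$ and $\Delta^-_{P^F} = -\Delta^-_P$; on the $\mathfrak{gl}(n)$ side, tensoring with the one-dimensional module $\bigwedge\nolimits^n {\mathbb C}^n$ does not change the shadow, while passing to the restricted dual swaps $\Delta^+_V \leftrightarrow \Delta^-_V$ and fixes $\Delta^F_V, \Delta^I_V$. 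Using that $\Delta^F_V$ and $\Delta^I_P$ are symmetric under negation, one checks that the condition of Theorem \ref{finmult} for $(P,V)$ is equivalent to that for $(P^F, V_* \otimes \bigwedge\nolimits^n {\mathbb C}^n)$.

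Next I would set
$$
B(f \otimes v,\; g \otimes v_* \otimes \omega) := \langle f,\, g \otimes \omega\rangle_P \cdot \langle v, v_*\rangle,
$$
where $\langle\cdot,\cdot\rangle_P$ is the pairing $P \times T(P^F, \bigwedge\nolimits^n {\mathbb C}^n) \to \mathbb C$ produced by Lemma \ref{dualDmod}, and $\langle v, v_*\rangle$ is the canonical pairing $V \times V_* \to \mathbb C$. To verify ${\mathcal W}_n$-invariance under $\xi = x^\alpha \partial_j$, I would expand both $\xi \cdot (f \otimes v)$ and $\xi \cdot (g \otimes v_* \otimes \omega)$ via the tensor-module formula of \S\ref{subsec-tensor-modules}, noting that $E_{ij}\omega = \delta_{ij}\omega$ on $\bigwedge\nolimits^n {\mathbb C}^n$. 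The sum $B(\xi \cdot (f\otimes v),\, g \otimes v_* \otimes \omega) + B(f \otimes v,\, \xi \cdot (g \otimes v_* \otimes \omega))$ then splits into two groups of terms. The ``pure $P$'' group (with common factor $\langle v, v_*\rangle$) recombines precisely into $\langle \xi f,\, g \otimes \omega\rangle_P + \langle f,\, \xi(g \otimes \omega)\rangle_P$, which vanishes by the ${\mathcal W}_n$-invariance of $\langle\cdot,\cdot\rangle_P$ furnished by Lemma \ref{dualDmod}. The ``mixing'' group has coefficients $\langle E_{ij}v, v_*\rangle$ and $\langle v, E_{ij}v_*\rangle = -\langle E_{ij}v, v_*\rangle$, paired against $\langle \partial_i(x^\alpha) f,\, g \otimes \omega\rangle_P$ and $\langle f,\, \partial_i(x^\alpha) g \otimes \omega\rangle_P$; the latter two are equal by the $\mathcal O_n$-self-adjointness $\langle h f,\, g \otimes \omega\rangle_P = \langle f,\, hg \otimes \omega\rangle_P$ for $h \in \mathcal O_n$, also established in the proof of Lemma \ref{dualDmod}. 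These mixing contributions therefore cancel and $B$ is ${\mathcal W}_n$-invariant.

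Nondegeneracy on each weight space is then immediate: on $T(P,V)^\lambda = \bigoplus_{\mu + \nu = \lambda} P^\mu \otimes V^\nu$ paired against the analogous space on the other side, $B$ factors as a tensor product of the nondegenerate pairings from Lemma \ref{dualDmod} and from $V \times V_* \to \mathbb C$. Comparing weight-space dimensions via Lemma \ref{restricted_dual}(ii) yields the desired isomorphism $T(P^F, V_* \otimes \bigwedge\nolimits^n {\mathbb C}^n) \simeq T(P,V)_*$. The only step that is more than bookkeeping is the invariance computation, where one must carefully track the cross-terms between the ${\mathcal W}_n$-action on $P$ and the induced $\mathfrak{gl}(n)$-action on $V$; everything else is driven by invariances already present in Lemma \ref{dualDmod} and by the intrinsic $\mathfrak{gl}(n)$-invariance of the pairing $V \times V_* \to \mathbb C$.
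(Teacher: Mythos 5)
Your proposal is correct and follows essentially the same route as the paper: the same pairing $\langle f\otimes v, g\otimes w\rangle = \langle f, g\rangle\langle v,w\rangle$ built from the $P\times T(P^F,\bigwedge^n\mathbb C^n)$ pairing of Lemma \ref{dualDmod} and the canonical $V\times V_*$ pairing, and the same invariance computation using the three identities (skew-symmetry of $\langle\cdot,\cdot\rangle_P$ under $x^\alpha\partial_j$, $\mathcal O_n$-self-adjointness, and $\mathfrak{gl}(n)$-invariance of $V\times V_*\to\mathbb C$). You additionally make explicit the nondegeneracy on weight spaces and the finite weight multiplicities of $T(P^F,V_*\otimes\bigwedge^n\mathbb C^n)$, which the paper leaves implicit.
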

  \begin{proof} We define a pairing
    $$T(P^F,V_*\otimes\bigwedge\nolimits^n {\mathbb C}^n)\times T(P,V)\to\mathbb C$$
    by the formula
    $$\langle f\otimes v,g\otimes w\rangle=\langle f,g\rangle\langle v,w\rangle,\quad v\in V,w\in V_*, f\in P,g\in T(P^F,\bigwedge\nolimits^n {\mathbb C}^n). $$
    Then we have
    $$\langle x^\alpha\partial_j(f)\otimes v+\sum_{i}\partial_i(x^\alpha)f\otimes E_{ij}v,g\otimes w\rangle+  \langle f\otimes v,x^\alpha\partial_j(g)\otimes w+\sum_{i}\partial_i(x^\alpha)\otimes E_{ij}w\rangle=$$
    $$\langle x^\alpha\partial_j(f),g\rangle\langle v,w\rangle+\langle f, x^\alpha\partial_j(g)\rangle \langle v,w\rangle+$$
    $$ \sum_j\langle \partial_j(x^\alpha)f,g\rangle\langle E_{ij}v,w\rangle+ \langle f,\partial_j(x^\alpha)g\rangle\langle v,E_{ij}w\rangle=0,$$
    because of 
    $$\langle x^\alpha\partial_j(f),g\rangle+\langle f, x^\alpha\partial_j(f)\rangle =0,$$
    $$\langle \partial_j(x^\alpha)f,g\rangle= \langle f,\partial_j(x^\alpha)g\rangle$$
    and
    $$\langle E_{ij}v,w\rangle+\langle v,E_{ij}w\rangle=0.$$

    \end{proof}
\subsection{Statement of Main Result}
In this subsection we state and prove the main result in the paper. Some of the results used in the proof will be established in the next three sections. 

\begin{theorem}
Let $M$ be a simple weight ${\mathcal W}_n$-module with finite weight multiplicities. Then $M$ is the unique submodule of some tensor module $T(P,V)$ with finite weight multiplicities. More precisely, exactly one of the following holds:
\begin{itemize}
\item[(i)] $M$ is isomorphic to $T(P,V)$ for a simple weight $\mathcal D_n$-module $P$ and a simple weight $\mathfrak{gl}(n)$-module $V$  with finite weight multiplicities, such that $(\Delta^I_P\sqcup\Delta^-_P)\subset(\Delta^F_V\sqcup \Delta^-_V)$ and such that  $V$ is not isomorphic to a fundamental representation. 
\item[(ii)] $M$ is isomorphic to $dT(P, \bigwedge\nolimits^k {\mathbb C}^n)$ for some $k=0,1,...,n-1$, and a simple weight $\mathcal D_n$-module $P$.
\item[(iii)] $M \simeq \mathbb C$, which is the unique simple submodule of $T(P, \bigwedge\nolimits^0 {\mathbb C}^n)$.
\end{itemize} 

\end{theorem}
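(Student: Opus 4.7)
The plan is to follow the program outlined in the introduction: apply the parabolic induction theorem from \cite{PS} to reduce the problem to classifying simple bounded modules over a specific Levi-type subalgebra $\mathfrak{g}$ of $\mathcal{W}_n$, and then compute the image of these under the induction functor. Concretely, Theorem \ref{PS} realizes $M$ as the unique simple quotient of $U(\mathcal{W}_n)\otimes_{U(\mathfrak{p})}N$ for a parabolic subalgebra $\mathfrak{p}\subset\mathcal{W}_n$ with Levi factor $\mathfrak{g} = \mathcal{W}_m \ltimes (\mathfrak{k}\otimes \mathcal{O}_m)$ and a simple weight $\mathfrak{g}$-module $N$. Choosing $\gamma$ as in Theorem \ref{PS}(b) maximal with the property that $N$ is bounded, the assumption that $M$ has finite weight multiplicities transports to boundedness of $N$ along every direction preserved by $\mathfrak{g}$.

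The second step, carried out in Section 5 and stated as Theorem \ref{mainparabolic}, will classify the simple bounded $\mathfrak{g}$-modules that can occur. The main tool is the notion of a $(\mathfrak{g},\mathcal{O}_m)$-module combined with the twisted localization functor $D^{\mathbf{c}}_{\langle a_1,\ldots,a_\ell\rangle}$ from \S\ref{subsec-tw-loc}, while Lemma \ref{bnd-wn-finite-length} supplies the necessary finite-length control. The outcome will be that $N$ is either the unique simple submodule of a tensor $\mathfrak{g}$-module $T(P,V)$, or a generalized tensor module of the form $\mathcal{F}(T(P,V),S)$ obtained from such a tensor module by twisted localization along a set $S$ of commuting roots.

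The third step, to be carried out in Section 6, applies $\operatorname{Ind}_{\mathfrak{p}}^{\mathcal{W}_n}$ to each type of $N$ and identifies the unique simple quotient. Using the explicit formulas for the $\mathcal{W}_n$-action on tensor modules from \S\ref{subsec-tensor-modules}, one verifies that the $\mathfrak{p}$-top of an appropriate tensor $\mathcal{W}_n$-module $T(P',V')$ reproduces $N$; by Proposition \ref{parabolicind}(c), this determines $M$ up to isomorphism. When $V'$ is not a fundamental representation one lands in case (i); when $V'$ is fundamental the image of the de Rham differential yields case (ii), and the exceptional configuration $(P',V')=(\mathcal{O}_n,\bigwedge\nolimits^0\mathbb{C}^n)$ contributes the trivial submodule of case (iii). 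Passing from the quotient description in Theorem \ref{PS} to the submodule description in the theorem statement is effected by the restricted duality functor: Lemma \ref{restricted_dual}, together with the tensor-module duality established just above, interchanges simple quotients with simple submodules, and Proposition \ref{tensor_sub} ensures the asserted uniqueness. Theorem \ref{finmult} certifies that the shadow condition $(\Delta^I_P\sqcup\Delta^-_P)\subset(\Delta^F_V\sqcup\Delta^-_V)$ of case (i) is both necessary and sufficient for finite weight multiplicities.

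The principal obstacle is Theorem \ref{mainparabolic}: the interplay between the abelian ideal $\mathfrak{k}\otimes\mathcal{O}_m$ and the Cartan-type factor $\mathcal{W}_m$ demands a careful application of twisted localization to straighten bounded $\mathfrak{g}$-modules into $(\mathfrak{g},\mathcal{O}_m)$-modules, where the structure becomes rigid enough to force the tensor-module form. A second technical step is verifying that induction of a generalized tensor module $\mathcal{F}(T(P,V),S)$ collapses to an ordinary $\mathcal{W}_n$-tensor module; the key observation here is that the twisted localization encoded by $\mathcal{F}$ matches the localization of $P$ along the roots in $S$, so that induction absorbs the twist and the answer is again of the form $T(P',V')$.
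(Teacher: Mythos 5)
Your high-level program — parabolic induction via Theorem \ref{PS}, boundedness of the top, classification of bounded $\g$-modules, and re-assembly under induction — is the same as the paper's. But there is a concrete conceptual error in your description of the middle and final steps that would derail a detailed write-up.

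First, you describe the $\g$-module $\mathcal F(T(P,V),S)$ as ``obtained from such a tensor module by twisted localization along a set $S$ of commuting roots.'' This is not what $\mathcal F(T(P,V),S)$ is. Here $S$ is a simple cuspidal $\k$-module, and $\mathcal F(R,S):=R\otimes_{\mathcal O_m}(\mathcal O_m\otimes S)$ is a generalized tensor module of the $(\g,\mathcal O_m)$-type; it has nothing directly to do with twisted localization of $R$. Twisted localization does enter the \emph{proof} of Theorem \ref{mainparabolic} — namely, Proposition \ref{reduction} straightens an arbitrary bounded $\g$-module into one with a highest-weight top via $D^{\mu}_\Gamma$, and Corollary \ref{non-abelian} then unwinds the twist on the $\k$-module factor — but the twisting lives on $S$, not on $P$ or on a ``set of roots $S$''. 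Consequently your later claim that ``the twisted localization encoded by $\mathcal F$ matches the localization of $P$ along the roots in $S$, so that induction absorbs the twist'' is not how the argument goes and would not close. The actual bridge (Lemmas \ref{backtotensor}, \ref{non-degeneratecase}, \ref{degeneratecase}) constructs explicitly the $\mathfrak{gl}(n)$-module $\hat S$ as the simple quotient of $U(\mathfrak{gl}(n))\otimes_{U(\p')}(S^U\otimes V)$ and the $\mathcal D_n$-module $\tilde P=\mathbb C[x_1,\dots,x_p]^F\otimes P\otimes\mathbb C[x_{p+m+1},\dots,x_n]$, and then identifies the $\p$-top of $T(\tilde P,\hat S)$ with $\mathcal F(T(P,V),S)$ by a support comparison; nothing is ``absorbed.''

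Second, your suggestion to pass from the quotient description of Theorem \ref{PS} to the submodule description via restricted duality is a detour the paper does not take and is not needed: once the $\p$-top of $T(\tilde P,\hat S)$ is identified with $N$, Frobenius reciprocity gives a nonzero map $U(\mathcal W_n)\otimes_{U(\p)}N\to T(\tilde P,\hat S)$, whose image must be the unique simple submodule of $T(\tilde P,\hat S)$ (Proposition \ref{tensor_sub}), hence isomorphic to $M$. No dualizing is required, and invoking the tensor-module duality lemma here would require you to first establish that the dual is again a tensor module with the right shape, an extra verification the direct argument avoids.
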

\begin{proof} Let $M$ be a simple weight ${\mathcal W}_n$-module with finite weight multiplicities.  By Theorem \ref{PS} and Proposition \ref{bounded},  $M$ is a quotient of the parabolically induced module
  $\operatorname{Ind}^{{\mathcal W}_n}_\p N$ where $N$ is a simple bounded $\g$-module over the  Levi subalgebra $\g$ of $\p$. Moreover, by Corollary \ref{conditions}, $N$ satisfies the additional conditions (1) and (2) of Section \ref{Levi}. 
  Theorem \ref{mainparabolic} provides a classification of such $N$. Finally, Lemma \ref{non-degeneratecase} and Lemma \ref{degeneratecase} ensure that $M$ is one of the modules listed in the statement.
\end{proof}

\section{Applications of the parabolic induction} \label{sec-par-ind}
Recall that $\Delta$ stands for the set of roots of ${\mathcal W}_n$. We use the setting of \S \ref{subsec-par-ind}.

In what follows we always assume that $M$ is a simple weight $\mathcal W_n$-module that has finite weight multiplicities. We will use that $M$ is the unique simple quotient of a parabolically induced module $U({\mathcal W}_n)\otimes_{U(\p)}M_0$, as stated in  Theorem \ref{PS}. Let $\p$ be the parabolic subalgebra associated
with $\gamma=\sum_{i=1}^n a_i\varepsilon_i$. We assume without loss of
generality that
$$a_1\geq\dots\geq a_p>0=a_{p+1}=\dots=a_{p+m}>a_{p+m+1}\geq\dots\geq a_n.$$  Henceforth we fix $\p$ and denote by $\g$ the Levi subalgebra of $\p$. Then $\g\simeq {\mathcal W}_m\ltimes(\k\otimes \mathcal O_m)$ where $\k$ is a Levi subalgebra in
$\mathfrak{gl}(p)\oplus\mathfrak{gl}(n-m-p)$.
Under this assumptions we have the following 

\begin{proposition}\label{bounded} The simple $\g$-module $M_0$ is bounded.
  \end{proposition}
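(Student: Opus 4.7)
The plan is to first identify $M_0$ with the $\p$-top $M^\n$ of $M$, then show that the weight multiplicities of $M_0$ coincide with those of $M$ on $\supp M_0$, and finally promote the resulting pointwise finiteness to a uniform bound.

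For the first two steps, Proposition \ref{parabolicind}(b) gives that $M^\n$ is a simple $\g$-module. The canonical copy of $M_0$ inside $U(\mathcal W_n)\otimes_{U(\p)}M_0$ consists of $\n$-invariant vectors, and its image in the unique simple quotient $M$ is a nonzero $\g$-submodule of $M^\n$; by simplicity of $M^\n$ this image coincides with $M^\n$, so $M_0\simeq M^\n$. Next, by Remark \ref{rk-top},
$$
\supp M_0=\{\mu\in\supp M\mid \mu+\alpha\notin\supp M \text{ for all } \alpha\in\Delta_+\}.
$$
For such $\mu$, every $v\in M^\mu$ satisfies $\n\cdot v=0$ (the only possible targets $M^{\mu+\alpha}$ vanish), so $M^\mu\subseteq M^\n=M_0$; combined with the reverse inclusion, $M_0^\mu=M^\mu$, and therefore $\dim M_0^\mu=\dim M^\mu<\infty$ for every $\mu\in\supp M_0$.

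The heart of the proof is promoting this pointwise finiteness to a uniform bound. Since $M_0$ is a simple weight $\g$-module, $\supp M_0\subset\mu_0+\mathbb Z\Delta_0$, which by Theorem \ref{PS}(b) equals $\mu_0+\mathbb Z\Delta_0'$. Let $\g'\subseteq\g$ denote the finite-dimensional reductive subalgebra generated by $\h$ and the root spaces $\g_\alpha$ for $\alpha\in\Delta_0'$; since $\Delta_0'$ is a subsystem of $\Delta'=A_n$, $\g'$ is a direct sum of type-$A$ factors sharing the Cartan $\h$ with $\g$. My plan is to exploit the twisted localization of Subsection \ref{subsec-tw-loc}: picking commuting $\operatorname{ad}$-nilpotent root vectors $X_1,\dots,X_\ell\in\g'$ whose weights $\alpha_1,\dots,\alpha_\ell$ form a $\mathbb Q$-basis of $\mathbb Q\Delta_0'$, the twisted localization $\widetilde M_0:=D^{\mathbf z}_{\langle X_1,\dots,X_\ell\rangle}M_0$ (for generic $\mathbf z$) is simple by iterated application of Lemma \ref{lem-tw-loc-simple}, preserves the weight-multiplicity function, and has each $X_i$ acting bijectively. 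Bijectivity of each $X_i$ forces multiplicities to be invariant under translation by $\alpha_i$; combining this with a coherent-family/Weyl-group argument for the reductive $\g'$ should yield uniform boundedness of $\widetilde M_0$, and hence of $M_0$.

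The principal obstacle is this last coherent-family step: a maximal commuting set of invertible roots in $\Delta_0'$ may only $\mathbb Q$-span (not $\mathbb Z$-span) $\mathbb Z\Delta_0'$, so translation-invariance alone is insufficient and one must combine it with Weyl-group reflections from $\g'$ to cover all of $\supp M_0$. In addition, the non-semisimple $\mathcal W_m$-factor of $\g$ must be handled compatibly throughout, ensuring that the twisted localization and the coherent-family extension respect the semidirect product structure $\g\simeq\mathcal W_m\ltimes(\k\otimes\mathcal O_m)$. This is the technical heart of the argument.
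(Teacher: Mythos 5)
Your first two steps --- identifying $M_0$ with $M^\n$ and noting that $\dim M_0^\mu=\dim M^\mu<\infty$ on $\supp M_0$ --- are correct and match what the paper implicitly uses. The heart of your argument, however, has two genuine gaps.

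First, twisted localization $D^{\mathbf z}_{\langle X_1,\dots,X_\ell\rangle}$ only behaves as you describe (preserving multiplicities, making $X_i$ bijective) once you already know each $X_i$ acts \emph{injectively} on $M_0$; if some $X_i$ acted locally nilpotently the localization would kill $M_0$. Establishing injectivity is precisely the nontrivial content here, and your proposal never addresses it. The paper proves it by an explicit $\mathfrak{sl}_2$-argument: each root $\alpha\in\Delta(\k)$ and each $\alpha=-\varepsilon_i$ with $p<i\le p+m$ has $\g_\alpha$ one-dimensional and part of an $\mathfrak{sl}_2$-triple inside $\g$, with $\ad\g_\alpha$ locally nilpotent, so $\g_\alpha$ acts either locally nilpotently or injectively on $M$; if it were locally nilpotent one would obtain infinitely many $\mathfrak{sl}_2$-submodules all contributing to a single $M^\mu$, contradicting $\dim M^\mu<\infty$. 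Applying the same argument to the restricted dual $M_*$ then upgrades injectivity to bijectivity.

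Second, your insistence on a \emph{commuting} set $X_1,\dots,X_\ell$ forces the associated roots to be linearly independent, hence only a $\mathbb Q$-basis of $\mathbb Q\Delta_0'$, and you correctly flag that this leaves a gap requiring a ``coherent-family/Weyl-group argument'' that is never carried out. But commutativity is not needed for this purpose at all: once you know $X_\alpha:M^\lambda\to M^{\lambda+\alpha}$ is an isomorphism for \emph{every} $\alpha\in\Delta(\k)$ and every $\alpha=-\varepsilon_i$ ($p<i\le p+m$), those roots $\mathbb Z$-span all of $\mathbb Z\Delta(\mathcal W_m)+\mathbb Z\Delta(\k)=\mathbb Z\Delta_0$, so translation-invariance of multiplicities already covers all of $\supp M_0\subset\lambda+\mathbb Z\Delta_0$ and gives a uniform bound $\dim M^\lambda$. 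In short, the paper's route is more elementary --- no localization, no coherent families --- and your obstacle dissolves once you drop the commutativity requirement and instead establish bijectivity of the individual root actions via $\mathfrak{sl}_2$-theory and duality.
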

  \begin{proof} First we prove three preliminary results. 
    \begin{lemma} Let $\alpha=-\varepsilon_i$ or $\alpha\in\Delta(\k)$. Then
      \begin{enumerate}
      \item $\dim\g_\alpha=1$ and any nonzero $X_{\alpha} \in \g_\alpha$ can be included in the $\mathfrak{sl}_2$ triple;
      \item Either $\g_\alpha$ acts locally nilpotently on $M_0$ or $\g_\alpha:M_0\to M_0$ is injective.
        \end{enumerate}
    \end{lemma}
    \begin{proof} The first assertion is obvious. The second follows from the fact that $\ad\g_\alpha$ is locally nilpotent in $\g$.
    \end{proof}
    \begin{lemma} Let $\alpha=-\varepsilon_i$ for $p<i\leq p+m$ or $\alpha\in\Delta(\k)$. Then $\g_\alpha$ acts injectively on $M_0$.
    \end{lemma}
    \begin{proof} Suppose that $\g_\alpha$ is locally nilpotent on $M_0$. Let $h$ be the Cartan element in the $\mathfrak{sl}_2$-triple containing $X_{\alpha} \in \g_\alpha \setminus \{ 0 \}$. In particular, $\alpha(h)=2$. Let $\mu\in\supp M$. Then $\mu+\mathbb Z\alpha\subset\supp M$.
      Furthermore, for any $n>0$ there exist $k\geq n$ and $v\in M^{\mu+k\alpha}$ such that $\g_\alpha v=0$. Let $M_k$ denote the $\mathfrak{sl}(2)$-submodule of $M$ generated by $v$. For all sufficiently large $k$
      we have $\mu\in \supp M_k$. Therefore $\dim M^\mu=\infty$. A contradicton.
      \end{proof}
      \begin{corollary}\label{c:conditions}  Let $\alpha=-\varepsilon_i$ or $\alpha\in\Delta(\k)$. For any $\lambda\in\supp M$ and $X\in\g_\alpha\setminus 0$ the map $X:M^\lambda\to M^{\lambda+\alpha}$ is an isomorphism.
      \end{corollary}
      \begin{proof} From the previous lemma we know that $X:M^\lambda\to M^{\lambda+\alpha}$ is injective. Applying the same lemma to $M_*$ we obtain $X:M_*^{-\lambda-\alpha}\to M_*^{-\lambda}$ is injective. Hence
        $X:M^\lambda\to M^{\lambda+\alpha}$ is surjective.
      \end{proof}
   We are now ready to complete the proof of Proposition \ref{bounded}.    Corollary \ref{c:conditions} implies $\dim M^\mu=\dim M^{\mu+\gamma}$ for any $\gamma\in \mathbb Z\Delta({\mathcal W}_m)+\mathbb Z\Delta(\k)$ and $\mu\in\supp M$. The statement
      follows.
    \end{proof}

    \section{Bounded simple $\g$-modules}\label{Levi}
    \subsection{Generalization of tensor modules for the Levi subalgebra $\g$ of $\p$} We retain the notation of the previous section. In this section we assume that  $m>0$. Recall that 
       $\g={\mathcal W}_m\ltimes(\k\otimes \mathcal O_m)$. Without loss of generality we may assume $\mathcal O_m=\mathbb C[x_1,\dots,x_m]$.
    In this section we will classify simple bounded $\g$-modules $N$ satisfying the additional properies:
    \begin{enumerate}
    \item $\supp N=\lambda+\mathbb Z\Delta(\g)$ for any $\lambda\in\supp N$.
      \item All weight spaces of $N$ have the same dimension $d$.
      \end{enumerate}
It follows from the proof of Proposition \ref{bounded} that $\g_\alpha$ acts injectively on $N$ if $\alpha=-\varepsilon_i$ or $\alpha\in\Delta(\k)$.
  
First, we generalize the notion of a $({\mathcal W}_m,\mathcal O_m)$-module to that of a $(\g,\mathcal O_m)$-module.
\begin{definition}\label{def1} A $\g$-module $N$ is a $(\g,\mathcal O_m)$-module if
  $N$ is a $\mathcal O_m$-module satisfying
  \begin{equation}\label{cond1}
    X(fv)=fX(v)+X(f)v\  \forall v\in N, f\in\mathcal O_m, X\in {\mathcal W}_m,
  \end{equation}
 \begin{equation}\label{cond2}  
   (h\otimes Y)(fv)=(hf)Yv\  \forall v\in N, \; f,h\in\mathcal O_m, Y\in \k.
   \end{equation}
  \end{definition}

  From now on we assume that all  $({\mathcal W}_m,\mathcal O_m)$-modules and all $(\g,\mathcal O_m)$-modules are weight modules.
  
  \begin{remark}\label{associative} 
    Consider the algebra $\mathcal A(m)$ generated by ${\mathcal W}_m\otimes 1$ and $1\otimes \mathcal O_m$ with relations
    $$(x\otimes 1)(y\otimes 1)-(y\otimes 1)(x\otimes 1)=[x,y]\otimes 1,$$
    $$(1\otimes f)(1\otimes g)=1\otimes fg,$$
    $$(x\otimes 1)(1\otimes f)-(1\otimes f)(x\otimes 1)=1\otimes x(f)$$
for $x,y\in {\mathcal W}_m$ and $f,g\in\mathcal O_m$.    
Any $({\mathcal W}_m,\mathcal O_m)$ is an $\mathcal A(m)$-module, and conversly, any $\mathcal A(m)$-module is a $({\mathcal W}_m,\mathcal O_m)$-module.
Furthermore, $\mathcal A(m)$ is isomorphic to $U({\mathcal W}_m)\otimes\mathcal O_m$ as a vector space by the correspondence $(X\otimes 1)(1\otimes f)\mapsto X\otimes f$ for all $X\in U({\mathcal W}_m)$ and $f\in\mathcal O_m$.    
    Let $\mathcal B:=\mathcal A(m)\otimes U(\k)$. Then any  $(\g,\mathcal O_m)$-module is a $\mathcal B$-module. 
    \end{remark}

\begin{example}     Let $S$ be a $\k$-module.  We define a $(\g,\mathcal O_m)$-module structure on the vector space
  $\mathcal O_m\otimes S$ by setting
  $$f(h\otimes s)=fh\otimes s,\quad (f\otimes Y)(h\otimes s)=fh\otimes Ys,\quad X(h\otimes s)=X(h)\otimes s$$
  for all $f,h\in\mathcal O_m$, $Y\in \k$, $X\in {\mathcal W}_m$ and $s\in S$.
  One can easily verify that $\tilde S:=\mathcal O_m\otimes S$ is a $(\g,\mathcal O_m)$-module.
  Moreover, if $R$ is a $({\mathcal W}_m,\mathcal O_m)$-module then $\mathcal F(R,S):=R\otimes _{\mathcal{O}_m}\tilde S$ is a $(\g,\mathcal O_m)$-module. 
\end{example}
\begin{remark}\label{chinese} A simple weight  $({\mathcal W}_m,\mathcal O_m)$-module $R$ with finite weight multiplicities is a tensor module $T(P,V)$ for some simple weight $\mathcal D_m$-module $P$ and some simple weight $\mathfrak{gl}(m)$-module $V$, see Theorem 3.7 in \cite{XL}. 
\end{remark}

  \begin{lemma}\label{irreducibility} If $R$ is a simple $({\mathcal W}_m,\mathcal O_m)$-module and $S$ is a simple weight $\k$-module then $\mathcal F(R,S)$ is a simple $(\g,\mathcal O_m)$-module, in the sense that it does not
    contain proper nontrivial   $(\g,\mathcal O_m)$-submodules.
\end{lemma}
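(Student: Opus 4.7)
The plan is to reduce the lemma to the standard external tensor product principle, by showing that the $(\g,\mathcal O_m)$-action on $\mathcal F(R,S)$ decouples cleanly once one identifies $\mathcal F(R,S)$ with $R\otimes_{\mathbb C} S$. The first step I would carry out is to use the natural isomorphism $r\otimes(f\otimes s)\mapsto fr\otimes s$ from $R\otimes_{\mathcal O_m}\tilde S$ onto $R\otimes_{\mathbb C} S$ and compute, from Definition \ref{def1}, that under this identification the action takes the explicit form
\[
f\cdot(r\otimes s)=fr\otimes s,\quad X(r\otimes s)=Xr\otimes s,\quad (f\otimes Y)(r\otimes s)=fr\otimes Ys
\]
for $f\in\mathcal O_m$, $X\in{\mathcal W}_m$, and $Y\in\k$. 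This shows that the whole $(\g,\mathcal O_m)$-action factors through the associative algebra $\mathcal A(m)\otimes U(\k)$ of Remark \ref{associative}, with $\mathcal A(m)$ acting solely on the first tensor factor $R$ and $U(\k)$ solely on the second factor $S$.

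The main step will then be to show that any nonzero $(\g,\mathcal O_m)$-submodule $N\subset \mathcal F(R,S)$ contains a nonzero pure tensor. I would pick $v\in N\setminus\{0\}$ and write $v=\sum_{i=1}^{k}r_i\otimes s_i$ with $k$ minimal, which forces both $\{r_i\}$ and $\{s_i\}$ to be $\mathbb C$-linearly independent. Since $R$ and $S$ are simple (hence cyclic) over the countable-dimensional algebras $\mathcal A(m)$ and $U(\k)$, both are of countable dimension over $\mathbb C$, so Dixmier's version of Schur's lemma yields $\End_{\mathcal A(m)}(R)=\End_{U(\k)}(S)=\mathbb C$. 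Jacobson density applied to $S$ then produces $Y\in U(\k)$ with $Ys_1=s_1$ and $Ys_i=0$ for $i\geq 2$, so that $(1\otimes Y)v=r_1\otimes s_1\in N$.

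To finish, I would use simplicity of $R$ as an $\mathcal A(m)$-module to get $R\otimes s_1=(\mathcal A(m)\otimes 1)(r_1\otimes s_1)\subset N$, and then simplicity of $S$ as a $\k$-module to get, for every $r\in R$, $r\otimes S=(1\otimes U(\k))(r\otimes s_1)\subset N$. Hence $N=R\otimes S=\mathcal F(R,S)$, as required.

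The hard part, in my view, will not be the density argument itself but the bookkeeping in the first step: one has to verify carefully, using (\ref{cond1}) and (\ref{cond2}), that the action of $\k\otimes\mathcal O_m$ on $\mathcal F(R,S)$ really does simplify under the tensor identification to the clean formula $(f\otimes Y)(r\otimes s)=fr\otimes Ys$, so that the $\mathcal O_m$-part acts purely on $R$ and the $\k$-part purely on $S$. Once this external tensor product description is established, the simplicity of $\mathcal F(R,S)$ follows routinely from Jacobson density and the simplicity of the two factors.
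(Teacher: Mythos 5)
Your argument is correct and follows essentially the same route as the paper: identify $\mathcal F(R,S)$ with the external tensor product $R\otimes_{\mathbb C}S$ as a module over $\mathcal B=\mathcal A(m)\otimes U(\k)$, with $\mathcal A(m)$ acting on the first factor and $U(\k)$ on the second, and conclude simplicity. The paper simply asserts that this external tensor product of simple modules is a simple $\mathcal B$-module, whereas you supply the standard proof of that assertion via Dixmier's Schur lemma and Jacobson density; this is exactly the content of the step the paper leaves implicit.
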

\begin{proof} Observe that $\mathcal F(R,S)$ is isomorphic to $R\otimes S$ as a $\mathcal B$-module. Hence it is a simple $\mathcal B$-module. This implies the statement.
\end{proof}
\begin{lemma}\label{conditions} If $N=\mathcal F(R,S)$ satisfies conditions (1) and (2), then $S$ is a simple cuspidal $\k$-module, and $R=T(P,V)$ for some simple cuspidal $\mathcal D_m$-module $P$  a simple finite-dimensional  $\mathfrak{gl}(m)$-module $V$. For any $\lambda\in\supp N$ we have that $\dim N^\lambda=(\dim V)d(S)$, where $d(S)$ is the degree of the cuspidal module $S$. 
\end{lemma}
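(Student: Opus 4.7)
My plan is to analyze $N=\mathcal F(R,S)$ through the vector-space identification $\mathcal F(R,S)\simeq R\otimes_{\mathbb C} S$ that follows from the freeness of $\tilde S=\mathcal O_m\otimes S$ over $\mathcal O_m$. A direct check from the definitions shows that under this identification ${\mathcal W}_m$ acts through its action on the $R$-factor alone (since derivations kill $1$), while $\k\otimes\mathcal O_m$ acts by $(f\otimes Y)(r\otimes s)=fr\otimes Y s$. Consequently the $\h=\h_{{\mathcal W}_m}\oplus\h_\k$-weight decomposition factors as $N^{\mu_1+\mu_2}=R^{\mu_1}\otimes S^{\mu_2}$, and the lemma reduces to separate statements about $R$ and $S$. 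Simplicity of both factors follows from simplicity of $N\simeq R\otimes S$ as a module over the algebra $\mathcal B$ of Remark \ref{associative}, and Remark \ref{chinese} then identifies $R=T(P,V)$ with $P$ a simple weight $\mathcal D_m$-module and $V$ a simple weight $\mathfrak{gl}(m)$-module.

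For the claim that $P$ is cuspidal, I would observe that on $R=T(P,V)$ the tensor-module formula with $\alpha=0$ kills the $\mathfrak{gl}(m)$-correction, so $\partial_j$ acts as $\partial_j^{(P)}\otimes\mathrm{id}_V$, and hence on $N$ as $\partial_j^{(P)}\otimes\mathrm{id}_V\otimes\mathrm{id}_S$. Corollary \ref{c:conditions} gives that $\partial_j:N^\lambda\to N^{\lambda-\varepsilon_j}$ is bijective for every $\lambda\in\supp N$, so $\partial_j$ is a bijection of $N$; since $V\otimes S\neq 0$, the operator $\partial_j^{(P)}$ itself must be a bijection of $P$. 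Writing $P\simeq P_1\otimes\cdots\otimes P_m$ and running through the classification of simple weight $\mathcal D_1$-modules, $\partial$ is bijective exactly on the cuspidal family $x^\lambda\mathbb C[x^{\pm 1}]$: it fails to be injective on $\mathcal O_1$ (it kills $1$) and fails to be surjective on $\mathcal O_1^F$ (the generator is not in its image). Hence every $P_j$ is cuspidal, and so is $P$. I expect this bijectivity argument to be the main obstacle, since it is precisely what rules out the $I^-(P)$ branch that does not follow from the mere injectivity of $\partial_j$.

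With $P$ cuspidal, $\dim P^\nu=1$ on the full lattice $\supp P=\lambda_P+\mathbb Z^m$, and $\supp V$ lies in a single coset of the $\mathfrak{gl}(m)$-root lattice $\mathbb Z\{\varepsilon_i-\varepsilon_j\}\subset\mathbb Z^m$. Hence for any $\mu_1\in\supp R$ one has $\mu_1-\nu_V\in\supp P$ for every $\nu_V\in\supp V$, and
\[
\dim R^{\mu_1}=\sum_{\nu_V\in\supp V}\dim V^{\nu_V}\,\dim P^{\mu_1-\nu_V}=\dim V,
\]
so condition (2) forces $\dim V<\infty$. Any root vector $Y\in\k_\alpha$ embeds in $\g$ as $1\otimes Y$ and acts on $N$ as $\mathrm{id}_R\otimes Y_S$; the injectivity of $\g_\alpha$ on $N$ for $\alpha\in\Delta(\k)$ therefore forces $Y$ to act injectively on $S$, and combined with the boundedness already built into condition (2) this makes the simple weight $\k$-module $S$ cuspidal of degree $d(S)=\dim S^{\mu_2}$. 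The desired formula $\dim N^{\mu_1+\mu_2}=\dim R^{\mu_1}\dim S^{\mu_2}=(\dim V)\,d(S)$ then follows at once.
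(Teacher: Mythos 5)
Your proof is correct and takes the same approach as the paper's, which consists of exactly one line — the observation that $\mathcal F(R,S)\simeq R\otimes S$ as an $\h$-module. You have filled in the details the authors leave implicit: the $\mathcal B$-module factorization giving simplicity of both tensor factors, Remark \ref{chinese} identifying $R=T(P,V)$, the bijectivity of each $\partial_j$ (which follows from the injectivity noted just after conditions (1)–(2) together with condition (2), slightly more directly than via Corollary \ref{c:conditions}) forcing cuspidality of $P$, and the multiplicity computation pinning down $\dim V<\infty$ and the formula $\dim N^\lambda=(\dim V)\,d(S)$.
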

\begin{proof} The lemma follows from the isomorphism of $\h$-modules $\mathcal F(R,S)\simeq R\otimes S$. 
  \end{proof}
  \begin{lemma} \label{simplicity} Let $S$ be a simple nontrivial weight $\k$-module, $P$ be a simple weight $\mathcal D_m$-module, and $V$ be a simple finite-dimensional $\mathfrak{gl}(m)$-module.
    Then $\mathcal F(T(P,V),S)$ is a simple $\g$-module.
  \end{lemma}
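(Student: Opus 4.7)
The plan is to promote the $(\g,\mathcal{O}_m)$-simplicity already granted by Lemma \ref{irreducibility} to $\g$-simplicity by showing that every nonzero $\g$-submodule $N'$ of $\mathcal F := \mathcal F(T(P,V),S)$ is automatically $\mathcal{O}_m$-invariant. Once this is established, $N'$ is a nonzero $(\g,\mathcal{O}_m)$-submodule and Lemma \ref{irreducibility} forces $N' = \mathcal F$.

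The key ingredient is to produce an element $W \in Z(U(\k)) \cap U(\k)_{\geq 1}$ that acts as the identity on $S$. Since $S$ is a simple nontrivial weight module over the reductive Lie algebra $\k$, either the semisimple part of $\k$ acts nontrivially on $S$ --- in which case a rescaled Casimir serves as $W$ --- or the center of $\k$ acts nontrivially, in which case a nonzero central element of $\k$ does. That the relevant $W$ acts by a scalar on all of $S$ is Dixmier's form of Schur's lemma, valid since $S$ has countable dimension. Writing $W = \sum_j c_j\, Y_{j,1}\cdots Y_{j,\ell_j}$ with each $Y_{j,k}\in \k$ and each $\ell_j\geq 1$, the identity $Wv = v$ together with relation (\ref{cond2}) of Definition \ref{def1} (which yields $f\cdot(Yw) = (f\otimes Y)w$ for any $w\in\mathcal F$) gives, for any $v\in N'$ and any $f\in \mathcal{O}_m$,
\begin{equation*}
f\cdot v \;=\; f\cdot (Wv) \;=\; \sum_j c_j\,(f\otimes Y_{j,1})\bigl(Y_{j,2}\cdots Y_{j,\ell_j}\,v\bigr).
\end{equation*}
Each factor $Y_{j,k}$ acts through $1\otimes Y_{j,k}\in\g$, and each $f\otimes Y_{j,1}$ sits in $\k\otimes \mathcal{O}_m\subset \g$, so every summand on the right hand side lies in $N'$. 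Therefore $f\cdot v\in N'$, proving that $N'$ is $\mathcal{O}_m$-stable.

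The only genuine obstacle is the existence of $W$; this is precisely where the nontriviality of $S$ is essential, since otherwise every central element would act by zero and the displayed identity would collapse. Once $W$ is in hand the calculation is essentially formal, effectively rewriting the $\mathcal{O}_m$-action in terms of the $\g$-action, and Lemma \ref{irreducibility} immediately yields $N' = \mathcal F$, completing the argument.
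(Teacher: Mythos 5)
Your overall strategy is exactly the paper's: show that every nonzero $\g$-submodule $N'$ of $\mathcal F(T(P,V),S)$ is automatically $\mathcal O_m$-stable, hence a $(\g,\mathcal O_m)$-submodule, and then invoke Lemma \ref{irreducibility}. The manipulation you carry out with $W$ is also formally correct once $W$ is in hand. The problem is that $W$ need not exist.

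You claim a dichotomy: if the semisimple part of $\k$ acts nontrivially on $S$ then a rescaled Casimir works; otherwise a central element of $\k$ does. But the quadratic Casimir (and in fact every element of the augmentation ideal of $Z(U(\k))$) can act by \emph{zero} on a simple nontrivial weight $\k$-module even when $\k$ acts with complete nontriviality. For $\k$ containing a factor $\mathfrak{sl}(2)$, the Verma module $M(-2)$ has trivial central character, and so does the simple cuspidal $\mathfrak{sl}(2)$-weight module with Casimir eigenvalue $0$ and irrational support; letting any torus factors of $\k$ act trivially on top of such an $S$ kills every candidate for $W$. In that situation no $W \in Z(U(\k)) \cap U(\k)\k$ acts as the identity on $S$, the identity $v = Wv$ is unavailable, and the argument collapses. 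Appealing to Dixmier does not help: Dixmier only tells you that $W$ acts by \emph{some} scalar, not that the scalar is nonzero.

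The paper circumvents this by refusing to ask for a single global element. One chooses a regular $u \in \k\cap\h$ that acts nontrivially on $S$ and decomposes $\mathcal F(T(P,V),S) = \bigoplus_a T(P,V)\otimes S^a$ into eigenspaces of $u$. For $a\neq 0$ the element $\tfrac{1}{a}u$ plays the role of your $W$ \emph{on that eigenspace only}: for $v\in N'\cap\bigl(T(P,V)\otimes S^a\bigr)$ one has
\begin{equation*}
f\cdot v \;=\; \tfrac{1}{a}\,f\cdot(uv) \;=\; \tfrac{1}{a}\,(f\otimes u)\,v \;\in\; N',
\end{equation*}
so that eigenspace piece of $N'$ is $\mathcal O_m$-stable. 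The $a=0$ piece is then handled by shuttling vectors into the $a\neq 0$ pieces via root vectors of $\k$, which commute with the $\mathcal O_m$-action. This is the correct ``localized'' version of your idea; to repair your proof you would essentially need to reproduce it.
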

  \begin{proof} Choose a regular $u\in\left( \k\cap\h\right)^*$ that acts nontrivially on $S$, and denote by $\mathcal F(T(P,V),S)^a$ the eigenspace of $u$ with eigenvalue $a$.
    Let $M$ be a proper nonzero submodule of $\mathcal F(T(P,V),S)$. Then $M^a=M\cap\mathcal F(T(P,V),S)^a$ is $\mathcal O_m$-invariant for any $a\neq 0$. Using the action of the root elements of $\k$, we obtain that $M^a$  is $\mathcal O_m$-invariant for $a=0$ as well. 
   Hence $M$ is a $(\g,\mathcal O_m)$-submodule of $\mathcal F(T(P,V),S)$ and we reach a contradiction.
    \end{proof}

    \begin{lemma}\label{geometric} Let $N$ be a simple $(\g,\mathcal O_m)$-module satisfying (1) and (2). Then $N$ is isomorphic to $\mathcal F(R,S)$ for some $({\mathcal W}_m,\mathcal O_m)$-module
      $R$ and some simple  cuspidal $\k$-module $S$.
    \end{lemma}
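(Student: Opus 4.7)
By Remark \ref{associative}, a $(\g,\mathcal O_m)$-module is the same as a module over the associative algebra $\mathcal B := \mathcal A(m)\otimes U(\k)$, in which $\mathcal A(m)\otimes 1$ and $1\otimes U(\k)$ commute; under this identification the simplicity of $N$ as a $(\g,\mathcal O_m)$-module becomes simplicity as a $\mathcal B$-module. On the other hand, $\mathcal F(R,S)=R\otimes_{\mathcal O_m}(\mathcal O_m\otimes S)$ collapses to the external tensor product $R\otimes_{\mathbb C} S$ as a $\mathcal B$-module, with $\mathcal A(m)$ acting on the first factor and $U(\k)$ on the second. The plan is to produce such an external tensor product decomposition of $N$.

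The first step is to exhibit a simple $\mathcal A(m)$-submodule of $N$. Pick a nonzero weight vector $v\in N$ and set $W=\mathcal A(m)\cdot v$. Since every element of $\mathcal A(m)$ has $\k$-Cartan weight zero, $W$ lies entirely in a single $(\h\cap\k)$-weight slab of $N$, so by condition (2) each $\mathcal W_m$-weight space of $W$ has dimension at most $d$; by condition (1) the $\mathcal W_m$-support of $W$ lies in a single coset of $\mathbb Z\Delta(\mathcal W_m)$. Lemma \ref{bnd-wn-finite-length} then gives finite $\mathcal W_m$-length of $W$, hence also finite $\mathcal A(m)$-length, so $W$ contains a simple $\mathcal A(m)$-submodule $R$, which by Remark \ref{chinese} is a tensor module $T(P,V)$. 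Because $1\otimes U(\k)$ commutes with $\mathcal A(m)$ in $\mathcal B$, the $\mathcal A(m)$-socle of $N$ is automatically $U(\k)$-stable, hence a $\mathcal B$-submodule; now that it is nonzero, $\mathcal B$-simplicity of $N$ forces it to equal $N$, and $N$ is $\mathcal A(m)$-semisimple.

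Next decompose $N=\bigoplus_\alpha N_\alpha$ into $\mathcal A(m)$-isotypic components. By commutativity each $N_\alpha$ is $U(\k)$-invariant and therefore a $\mathcal B$-submodule, so $\mathcal B$-simplicity forces exactly one component to be nonzero: $N$ is $\mathcal A(m)$-isotypic of a single simple type $R$. Since $N$ has countable dimension, Dixmier's lemma yields $\End_{\mathcal A(m)}(R)=\mathbb C$, and the canonical evaluation
$$R\otimes_{\mathbb C}\Hom_{\mathcal A(m)}(R,N)\longrightarrow N,\qquad r\otimes\phi\longmapsto \phi(r),$$
is a $\mathcal B$-module isomorphism, where $S:=\Hom_{\mathcal A(m)}(R,N)$ carries the $U(\k)$-action $(Y\phi)(r):=Y\phi(r)$, well defined because $U(\k)$ commutes with $\mathcal A(m)$. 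Lemma \ref{irreducibility} together with the $\mathcal B$-simplicity of $R\otimes_{\mathbb C} S$ forces $S$ to be simple as a weight $\k$-module, and rewriting $R\otimes_{\mathbb C}S=R\otimes_{\mathcal O_m}(\mathcal O_m\otimes S)=\mathcal F(R,S)$ gives $N\cong \mathcal F(R,S)$; the finite weight multiplicity condition then forces $S$ to be cuspidal, as spelled out in Lemma \ref{conditions}.

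The main obstacle is the first step: since $\mathcal A(m)$ is not reductive, one cannot directly invoke Mathieu-style finite-length results for arbitrary bounded $\mathcal A(m)$-modules. The key observation is that $\mathcal A(m)$ acts with trivial $\k$-Cartan weight, so every cyclic $\mathcal A(m)$-submodule of $N$ is confined to a single $(\h\cap\k)$-weight slab, where boundedness and the single-coset $\mathcal W_m$-support hypothesis allow Lemma \ref{bnd-wn-finite-length} to be imported through the subalgebra ${\mathcal W}_m\subset\mathcal A(m)$.
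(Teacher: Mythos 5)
Your proof is correct, and it follows the same overall framework as the paper's (pass to the associative algebra $\mathcal B=\mathcal A(m)\otimes U(\k)$, observe that $(\g,\mathcal O_m)$-simplicity of $N$ is the same as $\mathcal B$-simplicity, and then factor $N$ as an external tensor product over the two commuting factors of $\mathcal B$), but you split $\mathcal B$ from the opposite side. The paper locates a simple $\k$-isotype first: conditions (1) and (2) make $N$ a bounded $\k$-module supported in a single coset, so Mathieu's finite-length theorem for the \emph{reductive} algebra $\k$ applies directly, giving a simple $\k$-submodule $S$; then $R:=\Hom_\k(S,N)$ is automatically a simple $\mathcal A(m)$-module, the $\k$-isotypic decomposition is $\mathcal B$-stable, and $\mathcal B$-simplicity forces $N\simeq R\otimes S$. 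You instead locate a simple $\mathcal A(m)$-submodule first, which requires a supplementary device — confining a cyclic $\mathcal A(m)$-module to a single $(\h\cap\k)$-slab and importing finite length through the subalgebra $\mathcal W_m\subset\mathcal A(m)$ via Lemma~\ref{bnd-wn-finite-length} — precisely because $\mathcal A(m)$ is not reductive and there is no off-the-shelf finite-length theorem for it. That device is sound (any $\mathcal A(m)$-filtration is in particular a $\mathcal W_m$-filtration), so your argument goes through; the paper simply avoids needing it by working on the reductive side of $\mathcal B$. The rest of your argument (socle is $\mathcal B$-stable, single isotype by $\mathcal B$-simplicity, Dixmier's lemma in place of the paper's implicit use of one-dimensionality of weight spaces, cuspidality of $S$ via Lemma~\ref{conditions}) is fine and is either explicitly in the paper or offloaded, as you do, to Lemma~\ref{conditions}.
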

    \begin{proof} Recall the definition of $\mathcal B$ from Remark \ref{associative}. Consider  $N$ as a $\mathcal B$-module. By definition, for any vector $v\in N$ we have $\mathcal B v=U(\g)v$ (this follows from the relation $(f\otimes Y)v=f(Yv)$). Hence, $N$ is a simple $\mathcal B$-module.
      For a simple $\k$-module $S'$, the subspace $\Hom_\k(S',N)\otimes S'$ of $N$ is $\mathcal B$-stable. Hence, there is a unique up to isomorphism $S'$, such that $\Hom_\k(S',N)\neq 0$. The existence of such $S'$ follows from
      condition on the support of $N$. Let $S$ be such module. We have that  $R=\Hom_\k(S,N)$ is a simple $\mathcal A(m)$-module. Therefore, $N\simeq R\otimes S$ as a $\mathcal B$-module. The condition (\ref{cond2}) ensures that $N\simeq \mathcal F(R,S)$.      
    \end{proof}
    Recall that $\tilde V$ stands for $(\mathcal W_m,\mathcal O_m)$-module $\mathcal O_m \otimes V$.
    \begin{lemma}\label{degenerate} Let $N$ be a simple  weight $\g$-module, such that $\partial_i$ acts locally  nilpotently for all $i=1,\dots, m$. Then $N$ is isomorphic to a simple submodule of $\mathcal F(\tilde V,S)$ for
      some simple
      $\k$-module $S$ and a  simple $\mathfrak{gl}(m)$-module $V$.
    \end{lemma}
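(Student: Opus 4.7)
The plan is to embed $N$ into $\mathcal F(\tilde V, S)$ via a Taylor-series-type formula, with $V$ and $S$ extracted from the action of $\g_0:=\mathfrak{gl}(m)\oplus\k$ on a suitable simple quotient of $N$. Let $\n^-:=\langle \partial_1,\ldots,\partial_m\rangle\subset{\mathcal W}_m$, and set $N_0:=N^{\n^-}$. Since the $\partial_i$ commute and act locally nilpotently, $N_0\neq 0$. A short computation (using $[\partial_i,x^\alpha\partial_j]=\alpha_i x^{\alpha-\varepsilon_i}\partial_j$ and $[\partial_i,Y\otimes f]=Y\otimes\partial_i f$) shows that the normalizer of $\n^-$ in $\g$ equals $\n^-\oplus\g_0$, so $N_0$ naturally carries a $\g_0$-module structure.

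The heart of the argument is to show that $N/\g_+ N\neq 0$, where $\g_+:=\n^+\oplus(\k\otimes\m)$ and $\m$ is the maximal ideal of $\mathcal O_m$ at the origin. I will do this by bounding $\supp N$ from below in the direction $\sum_{i=1}^m\varepsilon_i$. Fix a weight vector $v\in N_0$. An induction on the degree of $c\in U(\g_+)$ shows that if the weight of $c$ has $\sum\varepsilon_i$-coordinate equal to $s$, then $\partial^\gamma(cv)=0$ for every $|\gamma|>s$; the inductive step uses the Leibniz expansions of $\partial^\gamma(x^\beta\partial_j)$ and $\partial^\gamma(Y\otimes x^\alpha)$ in $U(\g)$, together with the fact (easily checked) that $\g_0$ preserves $\n^-$-nilpotency degree. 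Combined with the PBW factorization $U(\g)=U(\n^-)U(\g_0)U(\g_+)$, this forces every weight of $N=U(\g)v$ to have $\sum\varepsilon_i$-coordinate at least that of $v$. Since every nonzero weight vector in $\g_+$ has strictly positive $\sum\varepsilon_i$-coordinate, no weight of $N$ at the minimum level can lie in $\g_+N$, and hence $N/\g_+N\neq 0$. This quotient is moreover cyclic over $\g_0$ (generated by the class of $v$), so admits a simple $\g_0$-quotient; invoking the standard fact that simple weight modules over the reductive Lie algebra $\mathfrak{gl}(m)\oplus\k$ factor as external tensor products, we may write this simple quotient as $V\otimes S$ with $V$ a simple $\mathfrak{gl}(m)$-module and $S$ a simple $\k$-module. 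Let $\phi_0:N\twoheadrightarrow V\otimes S$ denote the composition; it is $\g_{\geq 0}$-equivariant when $V\otimes S$ carries the trivial $\g_+$-action.

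Having chosen $V$ and $S$, I define $\Phi:N\to\mathcal F(\tilde V,S)=\mathcal O_m\otimes V\otimes S$ by
\[
\Phi(w)=\sum_{\alpha\in\mathbb Z_{\geq 0}^m}\frac{x^\alpha}{\alpha!}\otimes\phi_0(\partial^\alpha w),
\]
which is a finite sum by local nilpotency of $\n^-$. Equivariance of $\Phi$ under $\partial_j$ and under $\mathfrak{gl}(m)$ follows from direct computation. For the higher-order vector fields $x^\beta\partial_j$ ($|\beta|\geq 2$) and for $Y\otimes x^\alpha$ ($|\alpha|\geq 1$), one expands $\partial^\gamma(x^\beta\partial_j)$ and $\partial^\gamma(Y\otimes x^\alpha)$ in $U(\g)$ via the multi-index Leibniz rule; the terms in which the leading factor lies in $\g_+$ are killed after applying $\phi_0$, and the surviving terms (leading factor in $\n^-\oplus\g_0$) reassemble, after reindexing, into $(x^\beta\partial_j)\Phi(w)$ and $(Y\otimes x^\alpha)\Phi(w)$, respectively. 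Since $\Phi(w)=1\otimes\phi_0(w)$ for $w\in N_0$, the map $\Phi$ is nonzero, and by simplicity of $N$ it is injective. Its image is the required simple submodule of $\mathcal F(\tilde V,S)$.

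The principal obstacle is the non-vanishing of $N/\g_+N$, which rests on the careful nilpotency-degree control described above; once that is in place, the $\g$-equivariance check for $\Phi$ reduces to a bookkeeping exercise with the Leibniz rule.
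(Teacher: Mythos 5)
Your proof is correct, but it takes a genuinely different route from the paper's. The paper sets $N_0=N^{\n^-}$ (which is nonzero since the $\partial_i$ commute and act locally nilpotently), observes that $N_0$ is the $\n$-invariants for the parabolic $\q$ attached to $\gamma=-(\varepsilon_1+\dots+\varepsilon_m)$, and then invokes Proposition~\ref{parabolicind}(b) to conclude that $N_0$ is a simple module over the Levi $\g_0=\mathfrak{gl}(m)\oplus\k$; writing $N_0=V\otimes S$, the inclusion $\varphi\colon N_0\hookrightarrow 1\otimes V\otimes S\subset\mathcal F(\tilde V,S)$ is a $\q$-homomorphism (with $\n(\q)$ acting trivially on both sides), so the universal property of $U(\g)\otimes_{U(\q)}N_0$ produces the required map to $\mathcal F(\tilde V,S)$, whose image is a simple submodule isomorphic to $N$. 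You instead work with the coinvariants $N/\g_+N$ rather than the invariants $N^{\n^-}$. To see that these coinvariants are nonzero you carry out a Leibniz-rule induction bounding $\supp N$ from below in the $\sum_{i\le m}\varepsilon_i$-direction; this is essentially a by-hand verification of what the parabolic-induction lemma already gives for free. (It is also worth noting that, precisely because of your support bound, the natural map $N^{\n^-}\to N/\g_+N$ is an isomorphism of $\g_0$-modules, so your ``simple quotient'' is in fact the whole module and coincides with the paper's $N_0$; the two constructions produce the same $V\otimes S$.) Then, instead of the universal property of parabolic induction, you write down the Taylor-series map $\Phi(w)=\sum_\alpha\frac{x^\alpha}{\alpha!}\otimes\phi_0(\partial^\alpha w)$ and check $\g$-equivariance directly. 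I verified the Leibniz bookkeeping on $x^\beta\partial_j$ and $Y\otimes x^\nu$: the $\g_+$-terms die under $\phi_0$, the $\g_0$-terms match the $E_{ij}$ (resp.\ $Y$) part of the tensor-module action, the $\n^-$-term reindexes into the ${\mathcal W}_m$-derivation part, and the binomial coefficients work out; so the formula does intertwine. What you buy with your approach is an explicit, universal-property-free embedding, which is pleasant to have in hand. What the paper's approach buys is brevity: the known parabolic machinery (Proposition~\ref{parabolicind}) replaces both your support-bound induction and your equivariance computation. One last remark: the ``standard fact'' that simple weight $\mathfrak{gl}(m)\oplus\k$-modules factor as external tensor products is used without comment, but the paper does the same, so this is not a point of criticism specific to your write-up.
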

    \begin{proof} Let $N_0$ be the space of invariants of $\partial_1,\dots,\partial_m$. If $\q$ is the parabolic subalgebra associated to  $\gamma = -(\varepsilon_1+\dots+\varepsilon_m)$, then $N$ is the unique simple
      quotient of $U(\g)\otimes_{U(\q)}N_0$. Thus, $N_0$ is a simple $\mathfrak{gl}(m)\oplus\k$-module, so $N_0=V\otimes S$ for some simple modules $V$ and $S$. Then we have a natural  homomorphism $\varphi:N_0\to {\mathcal F}(\tilde V,S)$ of $\mathfrak{gl}(m)\oplus\k$-modules, hence, also of $\q$-modules. The homomorphism $\varphi$  induces a homomorphism $\Phi:U(\g)\otimes_{U(\q)}N_0\to\mathcal F(\tilde V,S)$ of $\g$-modules. The image of $\Phi$ is isomorphic to $N$.
          \end{proof}
    \begin{lemma}\label{genau} Let $N$ be  a simple $\g$-module satisfying (1) and (2). Assume that one can define an  $\mathcal O_m$-module structure on $N$ in such a way that it satisfies (\ref{cond1}) and
      $$f(g\otimes Y)v=(g\otimes Y)fv,\ \forall v\in N, f,g\in\mathcal O_m, Y\in\k.$$
      Then $N$ is a $(\g,\mathcal O_m)$-module.
    \end{lemma}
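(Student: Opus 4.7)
The plan is to show that the operator $D(h\otimes Y):=\pi(h\otimes Y)-h\,\pi(1\otimes Y)\in\End(N)$ vanishes identically. Once this is known, condition (\ref{cond2}) follows at once from the commutativity hypothesis: $\pi(h\otimes Y)(fv)=f\pi(h\otimes Y)v=fh\pi(1\otimes Y)v=(hf)\pi(1\otimes Y)v$.

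I would first collect several commutator identities on $N$: trivially $D(1\otimes Y)=0$; for $X\in\mathcal W_m$, $[\pi(X),D(h\otimes Y)]=D(X(h)\otimes Y)$, using the $\g$-bracket $[X,h\otimes Y]=X(h)\otimes Y$, the operator identity $[\pi(X),f]=X(f)$ coming from (\ref{cond1}), and $[X,1\otimes Y]=0$; $[f,D(h\otimes Y)]=0$ for $f\in\mathcal O_m$, directly from the hypothesis; and a direct expansion gives $[\pi(h'\otimes Z),D(h\otimes Y)]=D(hh'\otimes[Z,Y])-h\cdot D(h'\otimes[Z,Y])$ for $h'\otimes Z\in\k\otimes\mathcal O_m$, which at $h'=1$ specializes to $[\pi(1\otimes Z),D(h\otimes Y)]=D(h\otimes[Z,Y])$. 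The $\mathcal W_m$-identity yields the reduction $D(x_{k_1}\cdots x_{k_r}\otimes Y)=[\pi(x_{k_1}\cdots x_{k_r}\partial_i),D(x_i\otimes Y)]$, so it suffices to show $D(x_i\otimes Y)=0$ for all $i$ and $Y$. Moreover the same identities show that
\[
K:=\{v\in N\mid D(h\otimes Y)v=0\ \text{for all}\ h,Y\}
\]
is stable under $\pi(\mathcal W_m)$, $\pi(\k\otimes\mathcal O_m)$, and $\mathcal O_m$-multiplication, and hence is a $\g$-submodule of $N$. By the simplicity of $N$, either $K=N$ (giving $D\equiv 0$ and finishing the proof) or $K=0$.

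The main obstacle is excluding the case $K=0$. My plan is to introduce the alternate $\g$-action $\pi'$ on the vector space $N$ defined by $\pi'|_{\mathcal W_m}=\pi|_{\mathcal W_m}$ and $\pi'(h\otimes Y)=h\pi(1\otimes Y)$; the commutator identities above guarantee $\pi'$ is a Lie algebra representation, and $(N,\pi')$ with the given $\mathcal O_m$-structure satisfies (\ref{cond2}) by construction. Observe that $D(x_i\otimes Y)$ commutes with the full Weyl algebra $\mathcal D_m$ acting on $N$, where $\mathcal D_m$ is generated by $\mathcal O_m$-multiplication and the $\partial_j$'s (the Weyl relation $[\pi(\partial_j),f]=\partial_j(f)$ follows from (\ref{cond1})); thus $D(x_i\otimes Y)$ is a $\mathcal D_m$-endomorphism of $N$ of nonzero $\h_{\mathcal W_m}$-weight $\varepsilon_i$. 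To close the argument I would combine condition (2) (uniform weight multiplicity) with the bijectivity of each $\partial_i$ on weight spaces of $N$ (coming from Corollary \ref{c:conditions} and condition (2)) and the equivariance $[\pi(1\otimes Z),D(x_i\otimes Y)]=D(x_i\otimes[Z,Y])$ with respect to the $\k$-action, to show that a nonzero $D(x_i\otimes Y)$ would generate a proper nontrivial $\g$-submodule of $N$ under $\pi$, contradicting the simplicity of $N$ and forcing $D\equiv 0$.
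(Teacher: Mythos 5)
Your reduction to showing $D(h\otimes Y):=\pi(h\otimes Y)-h\cdot\pi(1\otimes Y)=0$ is correct and matches the paper's opening move (which reduces further to $D(x_1\otimes Y)=0$ via the $\mathcal W_m$-commutator). The identities you list are right, the observation that $K=\bigcap_{h,Y}\ker D(h\otimes Y)$ is a $\g$-submodule is correct, and the fact that $D(x_i\otimes Y)$ commutes with $\mathcal O_m$-multiplication and with every $\pi(\partial_j)$ is accurate and useful. But the final paragraph, which is where the whole difficulty sits, is only a statement of intent, and I do not see how to carry out the step you gesture at. You say a nonzero $D(x_i\otimes Y)$ ``would generate a proper nontrivial $\g$-submodule of $N$,'' but no candidate submodule is exhibited. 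The image of a single $D(x_i\otimes Y)$ is not $\g$-stable: by your own identity $[\pi(X),D(x_i\otimes Y)]=D(X(x_i)\otimes Y)$, conjugating by $\pi(\mathcal W_m)$ mixes different $D(h\otimes Y)$'s. The full subspace $\sum_{h,Y}D(h\otimes Y)N$ is $\g$-stable, but if $D\neq 0$ then simplicity forces it to equal $N$, which is no contradiction at all. Nothing in boundedness, $\k$-equivariance, or bijectivity of the $\partial_i$'s by itself excludes $K=0$.

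The piece you are missing is the twisted localization step the paper uses. On $N$ itself all $\partial_i$ act bijectively, so there is no vector killed by the $\partial_i$'s to anchor a highest-weight argument. The paper therefore passes to a twisted localization $D^{\mathbf c}_{\langle x_1,\dots,x_m\rangle}N$ (with a suitably chosen $\mathbf c\in\mathbb C^m$) in which such a vector $v$ exists, takes one with $|\mu|_r=\operatorname{Re}\sum\mu_i$ maximal, and notes that $D(x_1\otimes Y)v$ is again annihilated by all $\partial_i$ (your commutation observation) while having $|\cdot|_r$ strictly larger, hence $D(x_1\otimes Y)v=0$; a descent along the $\partial_i$ then kills $D(x_1\otimes Y)$ on the submodule generated by $v$, and since $D(x_1\otimes Y)$ commutes with $\tilde{\mathcal O}_m$ it vanishes on all of $D^{\mathbf c}N$ and hence on $N$. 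Without some such passage to a module possessing a $\partial_i$-highest weight vector, your argument never closes; I would say this is a genuine gap rather than a detail to be filled in.
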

    \begin{proof} We need to verify (\ref{cond2}). First, we claim that verifying (\ref{cond2}) is equivalent to checking
\begin{equation}\label{eq1}
  x_1(1\otimes Y)=(x_1\otimes Y),\ \forall Y\in\k.
  \end{equation}
      Indeed, let $f\in \mathcal O_m$ and $X=f\partial_1$. Then
      $$[X,x_1](1\otimes Y)=f(1\otimes Y)=[X,x_1\otimes Y]=f\otimes Y.$$
    For $f,g\in\mathcal O_m$ we have
      $$f(g\otimes Y)=fg(1\otimes Y).$$
      Next, observe that (1) and (2) ensure that $\partial_i$ and $x_i$ act injectively on $N$. Therefore we can localize $N$ with respect $x_i$ and define a $({\mathcal W}_m,\tilde{\mathcal O}_m)$-module structure on $N$,
      where $\tilde{\mathcal O}_m=\mathbb C[x_1^{\pm 1},x_2^{\pm 1},\dots,x_m^{\pm 1}]$. Consider the twisted localization $D_{\langle x_1,\dots ,x_n\rangle }^{\mathbf c} N$ of $N$ with $\mathcal U=\mathcal B$, and some $\mathbf c\in\mathbb C^m$.
      We can choose $\mathbf c$ so that $D_{\langle x_1,\dots ,x_n\rangle }^{\mathbf c} N$   has a nonzero weight vector  annihilated by all $\partial _i$.
      Among all such vectors choose one of weight $\mu$ with maximal  possible $|\mu|_r:=\operatorname{Re}\sum_{i=1}^m\mu_i$. Let $N'$
      be a $\g$-submodule of  $D_{\langle x_1,\dots ,x_n\rangle }^{\mathbf c} N$ generated by $u$. Note that for any $\nu\in \supp N'$ we have $|\mu|_r\geq|\nu|_r$. Let $v$ have weight $\nu$ with $|\nu|_r=|\mu|_r$. Then $\partial_iv=0$ and 
      $$\partial_i( x_1(1\otimes Y)-x_1\otimes Y)v=0,$$
      hence $u=( x_1(1\otimes Y)-x_1\otimes Y)v$ is annihilated by all $\partial_i$. On the other hand, the weight $\eta$ of $u$ satisfies $|\eta|_r=|\mu|_r+1$ hence $u=0$.
      Let $w\in N'$ be a weight vector of weight $\lambda$ with minimal $|\lambda|$ such that for some $Y\in\k$
      $$(x_1(1\otimes Y)-x_1\otimes Y)w\neq 0.$$
      We have
      $$\partial_i(x_1(1\otimes Y)-x_1\otimes Y)w=(x_1(1\otimes Y)-x_1\otimes Y)\partial_iw=0,$$
     which leads to a contradiction.
      Next we note that $D_{\langle x_1,\dots ,x_n\rangle }^{\mathbf c} N=\tilde{\mathcal O}_m \cdot N'$. Since $x_1(1\otimes Y)-x_1\otimes Y$ commutes with $\tilde{\mathcal O}_m$ we have $(x_1(1\otimes Y)-x_1\otimes Y)N^{(\mathbf c)}=0$. Then
      $(x_1(1\otimes Y)-x_1\otimes Y)N=0$. This completes the proof.
    \end{proof}
    \begin{lemma}\label{center} Assume that $N$ is a simple $\g$-module satisfying (1) and (2). Let $z$ be a central element of $\k$ which does not act trivially on $N$. Then $N$ is a $(\g,\mathcal O_m)$-module and hence is isomorphic
      to a module $\mathcal F(R,S)$.
    \end{lemma}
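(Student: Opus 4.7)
The strategy is to furnish $N$ with an $\mathcal O_m$-action that together with the $\g$-action makes it a $(\g,\mathcal O_m)$-module, whereupon Lemma \ref{geometric} delivers the desired isomorphism $N\simeq \mathcal F(R,S)$. Since $z$ is central in $\k$ and $[X,z\otimes 1]=z\otimes X(1)=0$ for every $X\in{\mathcal W}_m$, the element $z\otimes 1$ is central in all of $\g$; a standard Schur argument, applied to the finite-dimensional weight spaces of the simple module $N$, then forces $z\otimes 1$ to act on $N$ as a scalar $c$, which is nonzero by hypothesis.

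First I would define the candidate $\mathcal O_m$-action by $f\cdot v:=c^{-1}(z\otimes f)v$ and verify the hypotheses of Lemma \ref{genau}. Condition (\ref{cond1}) is immediate from the bracket $[X,z\otimes f]=z\otimes X(f)$ for $X\in{\mathcal W}_m$, and the commutativity $f\cdot((g\otimes Y)v)=(g\otimes Y)(f\cdot v)$ follows from $[z\otimes f,g\otimes Y]=[z,Y]\otimes fg=0$. The substantive question is whether this formula defines a genuinely associative $\mathcal O_m$-action, equivalently whether the element
\[
A(f,g):=(z\otimes fg)-c^{-1}(z\otimes f)(z\otimes g)\in U(\g)
\]
annihilates $N$ for all $f,g\in \mathcal O_m$.

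The plan for proving $A(f,g)\equiv 0$ on $N$ is induction on $\deg f+\deg g$. The base case $g=1$ is immediate because $(z\otimes f)$ commutes with $(z\otimes 1)$ and the latter acts by $c$. For the inductive step, the identity
\[
[\partial_k,A(f,g)]=A(\partial_k f,g)+A(f,\partial_k g)
\]
holds in $U(\g)$, and its right-hand side vanishes on $N$ by induction; hence $A(f,g)$ commutes with every $\partial_k$ on $N$. I expect this commutativity-to-vanishing step to be the main obstacle, because $A(f,g)$ has nonzero (positive) weight while $\partial_k$ is bijective on $N$ by Corollary \ref{c:conditions}, so commutation alone does not force vanishing. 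My plan to close the gap is to adapt the twisted-localization argument from the proof of Lemma \ref{genau}: after passing to $D^{\mathbf c}_{\langle\partial_1,\dots,\partial_m\rangle}N$ for a generic $\mathbf c\in\C^m$, one locates a weight vector on which $A(f,g)$ can be evaluated explicitly via the inductive hypothesis and shown to vanish, and the vanishing then propagates back to $N$ through the twist.

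With associativity in hand, Lemma \ref{genau} upgrades $N$ to a $(\g,\mathcal O_m)$-module, after which Lemma \ref{geometric} produces the isomorphism $N\simeq\mathcal F(R,S)$ with $R$ a simple $({\mathcal W}_m,\mathcal O_m)$-module and $S$ a simple cuspidal $\k$-module, completing the proof.
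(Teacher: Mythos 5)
Your overall strategy is right (produce an $\mathcal O_m$-structure, feed it into Lemma~\ref{genau}, and finish with Lemma~\ref{geometric}), and your preliminary observation that $z\otimes 1$ is central in $\g$ and hence acts by a nonzero scalar on the simple bounded module $N$ is correct. But the way you set up the $\mathcal O_m$-action creates a genuine gap: by defining $f\cdot v := c^{-1}(z\otimes f)v$ for \emph{all} $f\in\mathcal O_m$ at once, you must prove associativity, i.e.\ that $A(f,g)=(z\otimes fg)-c^{-1}(z\otimes f)(z\otimes g)$ kills $N$; as you yourself note, commutation of $A(f,g)$ with $\partial_1,\dots,\partial_m$ does not force $A(f,g)=0$ because $A(f,g)$ has strictly positive weight, and your proposed fix (an adapted twisted-localization argument) is only sketched, not carried out. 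That step is non-trivial and is precisely where the proposal is incomplete. Moreover, it is redundant work: proving $A(f,g)=0$ is essentially proving by hand the $Y=z$ instance of condition~(\ref{cond2}), which is the content Lemma~\ref{genau} is designed to supply once an $\mathcal O_m$-structure is already in place.

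The paper sidesteps the whole issue by defining the action only on the algebra generators: set $x_i\cdot v:=(x_i\otimes z)v$ (WLOG $c=1$). Since $[x_i\otimes z,x_j\otimes z]=[z,z]\otimes x_ix_j=0$, these operators pairwise commute, so this extends multiplicatively to a well-defined $\mathcal O_m$-module structure with no associativity to check. Condition~(\ref{cond1}) then follows from $[X,x_i\otimes z]=X(x_i)\otimes z$ for $X\in\mathcal W_m$, and the hypothesis $f(g\otimes Y)v=(g\otimes Y)fv$ of Lemma~\ref{genau} follows from $[x_i\otimes z,g\otimes Y]=x_ig\otimes[z,Y]=0$ (using that $z$ is central in $\k$). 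Lemma~\ref{genau} then delivers condition~(\ref{cond2}), after which Lemma~\ref{geometric} gives $N\simeq\mathcal F(R,S)$. In short, you missed the simplification of prescribing the action on the commuting generators $x_i\otimes z$ rather than on all of $\mathcal O_m$; with that change, $A(f,g)=0$ never needs to be addressed, and the rest of your argument goes through.
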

    \begin{proof} Without loss of generality we may assume that $z$ acts as identity on $N$. Define an $\mathcal O_m$-module structure on $N$ by setting $x_i v:=(x_i\otimes z)v$. Then $N$ satisfies the assumptions of Lemma \ref{genau}.
      The statement follows.
      \end{proof}
       \begin{lemma}\label{abelian} Let $\k$ be abelian and $N$ be a simple $\g$-module satisfying (1) and (2). 
If $\k N=0$, then $(\mathcal O_m\otimes\k) N=0$.
      \end{lemma}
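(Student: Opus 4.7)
The plan is to analyze the subspace $N' := \{v \in N : (\mathcal O_m \otimes \k)\,v = 0\}$ and show it is a $\g$-submodule, then use simplicity to conclude $N' = N$. Since $\k$ is abelian, the ideal $\mathfrak a := \mathcal O_m \otimes \k$ is itself abelian: $[f \otimes Y, g \otimes Z] = [Y,Z]\otimes fg = 0$. For $v \in N'$, $X \in {\mathcal W}_m$, and $f \otimes Y \in \mathfrak a$, I would verify
$$(f \otimes Y)(X v) = X\bigl((f\otimes Y) v\bigr) - \bigl(X(f)\otimes Y\bigr) v = 0,$$
since both $f \otimes Y$ and $X(f) \otimes Y$ lie in $\mathfrak a$ and annihilate $v$; and for $g \otimes Z \in \mathfrak a$,
$$(f \otimes Y)\bigl((g\otimes Z)v\bigr) = (g\otimes Z)\bigl((f\otimes Y)v\bigr) + [f\otimes Y, g\otimes Z]\,v = 0,$$
using the abelianness of $\mathfrak a$. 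So $N'$ is a $\g$-submodule and, by simplicity of $N$, either $N' = 0$ or $N' = N$.

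Equivalently, the submodule $V := \mathfrak a N$ satisfies $V = 0$ iff $N' = N$, and by simplicity $V \in \{0, N\}$. The goal therefore is to rule out $V = N$. Exploiting $\k N = 0$, for every $Y \in \k$ and every $j$, $[\partial_j, x_i \otimes Y] = \delta_{ij}(1 \otimes Y)$ acts as zero on $N$, so each operator $(x_i \otimes Y)$ commutes with every $\partial_j$. Together with the abelianness of $\mathfrak a$, this places the whole family $\{\partial_j\}_j \cup \{(x_i \otimes Y)\}_{i,Y}$ inside a commutative subalgebra of $\mathrm{End}(N)$, while condition (2) and the injectivity result from the proof of Proposition \ref{bounded} imply each $\partial_j$ is bijective on $N$.

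Following the strategy of Lemma \ref{genau}, I would then pass to the twisted localization $\widetilde N := D^{\mathbf c}_{\langle \partial_1, \ldots, \partial_m \rangle} N$ for generic $\mathbf c \in \mathbb C^m$, in which $(1 \otimes Y)$ still acts as zero but a weight vector $u$ annihilated by all $\partial_j$ can be extracted as a weight-extremal element of a cyclic $\g$-submodule of $\widetilde N$ (choosing weight $\mu$ with maximal $|\mu|_r := \mathrm{Re}\sum_i \mu_i$, as in Lemma \ref{genau}). For any such extremal $u$, extremality and the identity $[\partial_j, x_i \otimes Y] = 0$ on $N$ force $\partial_j\bigl((x_i \otimes Y)u\bigr) = (x_i\otimes Y)\partial_j u = 0$, so $(x_i \otimes Y)u$ is itself annihilated by all $\partial_j$; but its weight has strictly larger $|\cdot|_r$, so maximality yields $(x_i \otimes Y)u = 0$ for every $i$ and $Y$.

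The key final step is to propagate annihilation from the linear part $\{x_i \otimes Y\}$ to arbitrary $x^a \otimes Y$ with $|a| \geq 2$, using the recursion $[\partial_j, x_j^k \otimes Y] = k (x_j^{k-1} \otimes Y)$ together with the commutativity $[x^a \otimes Y, x^b \otimes Z] = 0$ on $N$. Inductively, if $(x^b \otimes Y)u = 0$ for every multi-index $b$ with $|b| \leq k-1$, then for $|a| = k$ the relation $\partial_j(x^a \otimes Y)u = (x^a \otimes Y)\partial_j u + (\partial_j x^a \otimes Y) u = 0$ together with extremality of $u$ forces $(x^a\otimes Y) u = 0$. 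Pulling back from $\widetilde N$ to $N$, this exhibits a nonzero element of $N'$, contradicting $N' = 0$. The main obstacle will be the propagation step: making sure that the extremality argument actually supplies annihilation degree-by-degree, and that the vector obtained in $\widetilde N$ descends to a genuine element of $N' \subset N$ (or equivalently that the corresponding $\g$-submodule of $\widetilde N$ meets $N$ nontrivially), which is where both the abelianness of $\mathfrak a$ and the hypothesis $\k N = 0$ are used decisively.
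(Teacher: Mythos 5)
Your outer frame is sound: the subspace $N'=\{v\in N:(\mathcal O_m\otimes\k)v=0\}$ is indeed a $\g$-submodule (your verification via Definition~\ref{def1}-style identities and the abelianness of $\mathcal O_m\otimes\k$ is fine), so it suffices to exhibit a single nonzero element of $N'$. That last reduction is exactly how the paper closes the proof. The problem is the middle of your argument, where you try to produce such an element by adapting the twisted-localization argument of Lemma \ref{genau}. Three things break. First, you localize at $\partial_1,\dots,\partial_m$ and then seek a nonzero vector annihilated by all $\partial_j$ in $D^{\mathbf c}_{\langle\partial_1,\dots,\partial_m\rangle}N$; but after localizing at $\partial_j$ these operators become invertible, so no such vector can exist. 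Lemma \ref{genau} localizes at $x_1,\dots,x_m\in\mathcal O_m\subset\mathcal B$, which is only possible there because $N$ is assumed from the start to be a $(\g,\mathcal O_m)$-module; in the present lemma $N$ carries no $\mathcal O_m$-module structure, so this move is not available. Second, even granting a vector killed by all $\partial_j$, the existence of one with maximal $|\mu|_r$ is not automatic: the cyclic $\g$-submodule it generates can have support unbounded in the $|\cdot|_r$ direction because $\mathcal W_m$ contains elements $x^a\partial_j$ whose weights have arbitrarily large $|\cdot|_r$, and without the $(\g,\mathcal O_m)$-structure you cannot argue (as Lemma \ref{genau} does) that such elements annihilate a $\partial$-invariant vector. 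Third, the ``pullback'' at the end is a genuine obstruction, as you yourself flag: the vector you produce lives in a twist of a localization, which is a different $\g$-module from $N$, and it is not simple as a $\g$-module, so you cannot straightforwardly conclude that $N'\ne 0$.

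The paper's proof avoids localization entirely and is essentially a trace argument. Writing $A=\partial_1(x_1\otimes h)$ and $B=\partial_1^2(x_1^2\otimes h)$ and using $\k N=0$ (so $[\partial_1, x_1\otimes h]$ acts as zero) and the abelianness of $\mathcal O_m\otimes\k$, one computes on $N$ that $[A,B]=2A^2$ and more generally $[A^k,B]=2kA^{k+1}$. Both $A$ and $B$ have $\h$-weight $0$, hence stabilize each finite-dimensional weight space $N^\lambda$; the trace of a commutator there vanishes, so $\operatorname{tr}_{N^\lambda}(A^{k})=0$ for all $k\ge 2$. Since $N$ is bounded this forces $A$, and therefore $x_1\otimes h$ (using that $\partial_1$ is invertible on $N$), to be nilpotent. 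Picking $v$ with $w:=(x_1\otimes h)^{p-1}v\ne 0$ and $(x_1\otimes h)^pv=0$, the identity $0=f\partial_1(x_1\otimes h)^pv=p(f\otimes h)w$ shows $w$ is annihilated by $\mathcal O_m\otimes h$, giving the nonzero element of $N'$. I would recommend replacing your localization step with this direct nilpotency argument; your commutativity observations (that $\partial_j$ and $x_i\otimes Y$ commute as operators on $N$, and that $\mathcal O_m\otimes\k$ is abelian) are precisely what make that computation go through.
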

      \begin{proof} We will show that $(f\otimes h) N=0$ for any $f\in\mathcal O_m,\ h\in\k$. Note that
        $$[\partial_1,x_1\otimes h]N=hN=0.$$ Therefore we have the following identites on $N$:
        $$[\partial_1(x_1\otimes h),\partial_1^2(x_1^2\otimes h)]=2\partial_1^2(x_1\otimes h)^2=2(\partial_1(x_1\otimes h))^2,$$
        $$[(\partial_1(x_1\otimes h))^k,\partial_1^2(x_1^2\otimes h)]=2k(\partial_1(x_1\otimes h))^{k+1}.$$
        This implis that, on each weight space $N^\lambda$ of $N$, $\operatorname{tr}_{N\lambda}(\partial_1(x_1\otimes h))^k=0$ for all $k>2$. Since $N$ is bounded this implies nilpotency of $\partial_1(x_1\otimes h)$ on $N$. Since $\partial_1$
        is invertible on $N$ we obtain that $x_1\otimes h$ is nilpotent on $N$. Let $p$ be the nilpotency degree of $x_1\otimes h$. There exists $v\in N$ such that
        $w:=(x_1\otimes h)^{p-1}v\neq 0$. Then for  $f\in \mathcal O_m$ we have
        $$0=f\partial_1(x_1\otimes h)^pv=p(f\otimes h)(x_1\otimes h)^{p-1}v.$$
        In other words, $w$ is annihilated by $\mathcal O_m\otimes h$. The subspace $N'$ of all vectors annihilated by $\mathcal O_m\otimes h$ is $\g$-invariant, but we just proved that $N'\neq 0$.
        By the  irreducibility of $N$, we have $N=N'$. Thus $(\mathcal O_m\otimes h)N=0$. \end{proof}

      \begin{proposition}\label{reduction} Let $N$ be a $\g$-module satisfying (1) and (2). Then for any Borel subalgebra $\b\subset\k$ there exists a simple bounded $\g$-module $\bar N$ satisfying the following two conditions:
        \begin{enumerate}
        \item[(i)] There exists a weight $\lambda\in\h^*$ such that $\supp \bar N\subset\lambda+\sum_{i=1}^m\mathbb Z\varepsilon_i-{{\mathbb Z}_{\geq 0}}\Delta(\b)$ and $\lambda(\h\cap\k)\neq 0$.
          \item[(ii)] The module $N$ is obtained from $\bar N$ by a twisted localization with respect to some set of commuting roots $\Gamma\subset -\Delta(\b)$.
          \end{enumerate}
        \end{proposition}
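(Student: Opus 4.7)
The plan is to realize $N$ as a twisted localization of a simple $\g$-module $\bar N$ whose support lies in a highest weight cone relative to $\b$, in the spirit of Mathieu's theory of coherent families adapted to the present non-reductive setting. Choose a maximal commuting subset $\Gamma=\{\beta_1,\dots,\beta_\ell\}\subset -\Delta(\b)$ and nonzero root vectors $a_i\in\g_{\beta_i}$. Each $a_i$ acts injectively on $N$ by the remark opening Section \ref{Levi}, and then bijectively by the equidimensionality condition (2). Hence the twisted localization $D^{\mathbf z}_\Gamma N$ is well defined for every $\mathbf z\in\C^\ell$, is bounded by Lemma 4.4 of \cite{M}, has support $\supp N+\sum_i z_i\beta_i$, and retains the weight multiplicities of $N$.

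The central step is to choose $\mathbf z$ so that $D^{\mathbf z}_\Gamma N$ contains a weight vector $v$ of some weight $\lambda$ that is simultaneously annihilated by $X_\alpha$ and by $f\otimes X_\alpha$ for every $\alpha\in\Delta(\b)$ and every $f\in\mathcal O_m$, and that satisfies $\lambda|_{\h\cap\k}\neq 0$. For the $\k$-action alone, this is Mathieu's reducibility-wall theorem applied to $D^{\mathbf z}_\Gamma N$ viewed as a bounded weight module over the reductive subalgebra $\k$: on the walls defined by $\langle\mu+\sum z_i\beta_i,\alpha^\vee\rangle\in\mathbb Z_{\geq 0}$ for simple roots $\alpha\in\Delta(\b)\setminus\pm\Gamma$ (with $\mu\in\supp N$ fixed), a $\b$-highest weight vector appears. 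Upgrading this to annihilation by $\mathcal O_m\otimes\bigoplus_{\alpha\in\Delta(\b)}\k_\alpha$ proceeds by downward induction on the height of $\alpha$, modeled on the argument in the proof of Lemma \ref{genau}: choosing $v$ to be a $\b$-highest weight vector of maximal value of $\operatorname{Re}\sum_i\lambda_i$, the computation $X_\beta(f\otimes X_\alpha)v=(f\otimes [X_\beta,X_\alpha])v$ combined with the inductive hypothesis (on $[X_\beta,X_\alpha]\in\k_{\alpha+\beta}$, a root of strictly greater height) shows $(f\otimes X_\alpha)v$ is again $\b$-highest of strictly greater $\operatorname{Re}\sum_i\lambda_i$, forcing it to vanish. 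A generic perturbation of $\mathbf z$ along the wall then arranges $\lambda|_{\h\cap\k}\neq 0$.

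Let $\bar N$ be a simple $\g$-submodule of $D^{\mathbf z}_\Gamma N$ containing $v$; it exists because $U(\g)v$ is bounded with cone-shaped support and therefore of finite length, by an adaptation of Lemma \ref{bnd-wn-finite-length} via the natural embedding of $\mathfrak{sl}(m+1)$ into ${\mathcal W}_m\subset\g$. The annihilation properties above force $\supp\bar N\subset\lambda+\sum_{i=1}^m\mathbb Z\varepsilon_i-\mathbb Z_{\geq 0}\Delta(\b)$, since the only root spaces of $\g$ that can raise the $\k$-component of a weight lie in $\bigoplus_{\alpha\in\Delta(\b)}\k_\alpha$ together with $\mathcal O_m\otimes\bigoplus_{\alpha\in\Delta(\b)}\k_\alpha$, both of which annihilate $v$; this establishes (i). Finally, applying Lemma \ref{lem-tw-loc-simple} to the embedding $\bar N\hookrightarrow D^{\mathbf z}_\Gamma N$, combined with bijectivity of each $a_i$ on $N$, yields $N\simeq D^{-\mathbf z}_\Gamma\bar N$, which is condition (ii). The main obstacle is simultaneous control of the $\k$-highest-weight condition and the $(\mathcal O_m\otimes\b^+)$-annihilation condition on the \emph{same} vector: these are individually standard (Mathieu for the former, the grading trick of Lemma \ref{genau} for the latter), but obtaining them jointly requires the reducibility-wall construction above, relying crucially on the boundedness of $N$ and the $\mathbb Z_{\geq 0}$-grading of $\mathcal O_m$ by total degree.
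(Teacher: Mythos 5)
Your overall strategy—realize $N$ as a twisted localization of a module with a $\b$-highest weight vector, then bootstrap to annihilation by $\mathcal O_m\otimes\mathfrak [\b,\b]$—is the right shape, and the opening steps (bijectivity of the $a_i$ from the remark opening Section \ref{Levi} together with condition (2), boundedness of the twist via Lemma 4.4 of \cite{M}, existence of a $\b$-highest weight vector after twisting via Mathieu's classification of cuspidal $\k$-modules) are all fine. But the central step of your induction has a genuine gap.

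You choose $v$ to be \emph{a $\b$-highest weight vector of maximal $\operatorname{Re}\sum_{i=1}^m\lambda_i$} inside $D^{\mathbf z}_\Gamma N$, and the induction on height concludes because $(f\otimes X_\alpha)v$ would be $\b$-highest of strictly larger $\operatorname{Re}\sum_i\lambda_i$. However, no such maximum exists, and you do not establish one. The set $\Gamma$ you twist by lies in $\Delta(\k)$, so every $\beta_i$ has zero $\varepsilon_j$-component for $j\leq m$; the twist $D^{\mathbf z}_\Gamma$ therefore does not change the eigenvalue of $\sum_{j=1}^m x_j\partial_j$. Condition (1) forces $\supp N$ (hence $\supp D^{\mathbf z}_\Gamma N$) to project onto the entire coset $\lambda_0+\mathbb Z$ under $\sum_{j=1}^m\varepsilon_j$, and by condition (2) every ``$\tau$-slice'' is a bounded cuspidal $\hat\k$-module of the same degree. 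If one slice acquires a $\b$-highest weight vector after a twist along $\Gamma\subset\Delta(\k)$, there is no reason for only finitely many slices to do so, and you give none; so $\operatorname{Re}\sum_{i=1}^m\lambda_i$ is a priori unbounded among $\b$-highest weight vectors and the ``forced vanishing'' step collapses. (Note that in Lemma \ref{genau}, which your argument is modeled on, the paper has an $\mathcal O_m$-module structure available and can therefore twist along $x_1,\dots,x_m$—precisely the direction that controls $\operatorname{Re}\sum\lambda_i$. In Proposition \ref{reduction} the $\mathcal O_m$-structure is exactly what has not yet been constructed, and $x_i\notin\g$, so this avenue is closed.)

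The paper instead exploits the \emph{injectivity of} $\partial_1,\dots,\partial_m$ on the twisted module $M=D^{-\mu}_\Gamma N$: having picked a $\b$-extreme weight $\lambda'$ in $\supp U(\hat\k)v$ (possible because cyclic $\hat\k$-submodules have finite length, Lemma 3.3 of \cite{M}), injectivity of the $\partial_i$ lets one ``slide down'' in the $\varepsilon_j$-directions and deduce $\supp U(\g)w\subset\lambda'+\sum_{i=1}^m\mathbb Z\varepsilon_i-\mathbb Z_{\geq 0}\Delta(\b)$; this cone control gives finite $\g$-length by Lemma \ref{bnd-wn-finite-length}, and a simple submodule $\bar N$ whose top vector is automatically killed by $\mathcal O_m\otimes\mathfrak n$ for weight reasons, with no height induction needed. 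You would need an analogue of this support-confinement step, not the ``maximal $\operatorname{Re}\sum\lambda_i$'' device. Separately, your one-line claim that ``a generic perturbation of $\mathbf z$ along the wall arranges $\lambda|_{\h\cap\k}\neq 0$'' is not justified; the paper proves $\lambda(\h\cap\k)\neq 0$ by a contradiction argument (a trivial $\k$-submodule would contradict injectivity of the $\Gamma$-root actions), which is a different and more robust mechanism.
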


        \begin{proof} Let $\hat\k=\k+\h$, $\mathfrak n=[\b,\b]$. Then $N$ is a bounded weight $\hat\k$-module, hence, any  cyclic $\hat\k$-submodule of $N$ has finite length (see Lemma 3.3 in \cite{M}). Let $N_0$ be a simple $\hat\k$-submodule of $N$.
          Note that $N_0$ is a cuspidal $\hat\k$-module.
  By  Proposition 4.8 in \cite{M}, there exist $\mu\in(\h\cap\k)^*$ and $\Gamma\subset -\Delta(\b)$ such that $N_0  \simeq D^{\mu}_{\Gamma}M_0$ for some simple bounded $\mathfrak b$-highest weight $\hat\k$-module $M_0$.
  Since $D^{\mu}_{\Gamma}$
  is well defined for $\g$-modules and commutes with the restriction functor $\operatorname{Res}^\g_{\hat\k}$, $M:=D^{-\mu}_{\Gamma}N$ contains an $\mathfrak n$-primitive weight vector $v \in M_0$, while
  $\partial_i$  act injectively on $M$ for all $i=1,\dots,m$. 
  Since $U(\hat\k)v$ is bounded it has finite $\hat\k$-length and hence there is $\lambda'\in \supp U(\hat\k)v$ such that  $\lambda'+\alpha\notin \supp U(\hat\k)v$ for all $\alpha\in\Delta(\b)$. The injectivity of  the action of $\partial_i$ implies that
  $$(\lambda'+\alpha+\sum_{i=1}^m {\mathbb Z}_{\geq 0} \varepsilon_i)\cap\supp U(\g)v=\emptyset.$$
  If $w$ is a nonzero vector of weight $\lambda'$ then
  $$\supp U(\g)w\subset\lambda'+\sum_{i=1}^m\mathbb Z\varepsilon_i-{{\mathbb Z}_{\geq 0}}\Delta(\b).$$
  This implies  $\dim (U(\mathcal O_m\otimes\mathfrak n))u<\infty$ for any $u\in U(\g)w$. By Lemma \ref{bnd-wn-finite-length}, the boundedness of $U(\g)w$ implies that $U(\g)w$ has finite length.
  Let $\bar N$ be a simple submodule of $U(\g)w$.
  Then there is a nonzero weight vector $u\in\bar N$ annihilated by $\mathcal O_m\otimes\mathfrak n$. Then $\bar N$ satisfies (i) with $\lambda$ being the weight of $u$,
  while (ii) follows from the simplicity of $N$.

  It remains to show that $\lambda(\h\cap\k)\neq 0$. For the sake of contradiction, assume that the opposite holds. Take a simple root $\alpha\in\Delta(\b)$. Then a simple computation shows that $\g_{-\alpha}u$ is annihilated by $\mathcal O_m\otimes\mathfrak n$.
  The simplicity of $\bar N$ hence implies that $\g_{-\alpha}u=0$ for all simple roots $\alpha$ and thus $M$ contains a trivial $\k$-submodule. But the roots of $\Gamma$ act injectively on $N$ and hence on $M$. This leads to  a contradiction.
   \end{proof}
   For a weight $\mu \in (\h \cap \k)^*$ and a Borel subalgebra $\b$ of $\k$, by $L_{\b} (\mu)$ (or simply by  $L (\mu)$ ) we denote the simple $\b$-highest weight $\k$-module of highest weight $\mu$.
   \begin{lemma}\label{tens_reduction} The module $\bar N$ constructed in Proposition \ref{reduction} is isomorphic to $\mathcal F(T(P,V),L(\bar\lambda))$ for a cuspidal simple $\mathcal D_m$-module $P$, a simple
     finite-dimensional $\mathfrak{gl}(m)$-module $V$, and a simple highest weight $\k$-module $L(\bar\lambda)$, where $\bar\lambda$ is the restriction of $\lambda$ to $\h\cap\k$.
   \end{lemma}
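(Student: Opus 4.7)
The plan is to identify the $\tilde{\n}$-top of $\bar N$ as a $({\mathcal W}_m,\mathcal O_m)$-module of tensor form $T(P,V)$, and then reconstruct $\bar N$ by parabolic induction via Proposition~\ref{parabolicind}(c).

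First, I introduce the parabolic subalgebra $\tilde{\p}={\mathcal W}_m\oplus(\mathcal O_m\otimes\b)$ of $\g$ corresponding to a generic element of $(\h\cap\k)^*$ dominant with respect to $\b$. Its Levi is $\tilde{\l}={\mathcal W}_m\oplus(\mathcal O_m\otimes(\h\cap\k))$ and its nilradical is $\tilde{\n}=\mathcal O_m\otimes\n$. Since $u$ is annihilated by $\tilde{\n}$ by construction, the space $V_0:=\bar N^{\tilde{\n}}$ is nonzero, and Proposition~\ref{parabolicind}(b) guarantees that $V_0$ is simple as a $\tilde{\l}$-module. Because the elements of $\tilde{\l}$ all have zero $(\h\cap\k)$-weight and $u$ has $(\h\cap\k)$-weight $\bar\lambda$, every vector of $V_0$ has $(\h\cap\k)$-weight $\bar\lambda$; in particular $1\otimes h$ acts as the scalar $\bar\lambda(h)$ on $V_0$ for each $h\in\h\cap\k$.

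Next, I equip $V_0$ with an $\mathcal O_m$-action. Fix $h_0\in\h\cap\k$ with $\bar\lambda(h_0)=1$, which exists since $\bar\lambda\neq 0$, and set $f\cdot v:=(f\otimes h_0)v$ for $f\in\mathcal O_m$ and $v\in V_0$. The Leibniz rule for ${\mathcal W}_m$ follows immediately from $[X,f\otimes h_0]=X(f)\otimes h_0$. For the action to be associative and independent of the choice of $h_0$, two identities must be established: $(\mathcal O_m\otimes\ker\bar\lambda)V_0=0$, and $(f\otimes h_0)(g\otimes h_0)v=(fg\otimes h_0)v$ for all $f,g\in\mathcal O_m$ and all $v\in V_0$. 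The corresponding subspaces of $V_0$ are both $\tilde{\l}$-stable (using $[X,\mathcal O_m\otimes\ker\bar\lambda]\subset\mathcal O_m\otimes\ker\bar\lambda$ together with the commutativity of $\mathcal O_m\otimes(\h\cap\k)$), so by simplicity of $V_0$ it suffices to verify both identities on the single vector $u$. Here I plan to adapt the twisted-localization technique from Lemma~\ref{genau}, exploiting the injectivity of $\partial_1,\dots,\partial_m$ on $\bar N$ (guaranteed by the construction in Proposition~\ref{reduction}, where $\bar N$ sits inside a module on which the $\partial_i$ act injectively) together with the stronger annihilation property $(\mathcal O_m\otimes\n)u=0$ to propagate the identities from a $\partial_i$-invariant vector in a localization back to $u$.

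Having made $V_0$ a simple weight $({\mathcal W}_m,\mathcal O_m)$-module, which is bounded as a submodule of the bounded module $\bar N$, Remark~\ref{chinese} gives $V_0\simeq T(P,V)$ for a simple weight $\mathcal D_m$-module $P$ and a simple weight $\mathfrak{gl}(m)$-module $V$; boundedness together with the support condition inherited from $\bar N$ then force $P$ to be cuspidal and $V$ to be finite-dimensional. Finally, set $M:=\mathcal F(T(P,V),L(\bar\lambda))$. By Lemma~\ref{simplicity} this is a simple $\g$-module, and a direct computation identifies $M^{\tilde{\n}}$ with $T(P,V)\otimes s_0$ as a $\tilde{\l}$-module, carrying exactly the same structure as $V_0$ (both are $T(P,V)$ as ${\mathcal W}_m$-modules, with $f\otimes h$ acting as $\bar\lambda(h)f$). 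Proposition~\ref{parabolicind}(c) then yields $\bar N\simeq M=\mathcal F(T(P,V),L(\bar\lambda))$.

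The main obstacle is the verification that the $\mathcal O_m\otimes(\h\cap\k)$-action on $V_0$ factors through the character $\bar\lambda$ and extends to an associative $\mathcal O_m$-action; the technical heart of this step, mirroring Lemma~\ref{genau}, is the twisted-localization argument at $\partial_1,\dots,\partial_m$ combining their injectivity on $\bar N$ with the strong $(\mathcal O_m\otimes\n)$-annihilation of the seed vector $u$. Once this is in place, the remaining identification steps follow routinely from the general parabolic induction machinery already assembled in the paper.
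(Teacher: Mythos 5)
Your proposal follows essentially the same route as the paper's own proof. The paper considers the parabolic $\q$ of $\g$ determined by the same $\gamma\in(\k\cap\h)^*$ that cuts out $\b$ (whose nilradical is exactly your $\tilde\n=\mathcal O_m\otimes\n$ and whose Levi is your $\tilde\l={\mathcal W}_m\oplus(\mathcal O_m\otimes(\h\cap\k))$), identifies the $\q$-top of $\bar N$ as $T(P,V)\otimes\mathbb C_{\bar\lambda}$, observes that this agrees with the $\q$-top of $\mathcal F(T(P,V),L(\bar\lambda))$, and concludes by Proposition~\ref{parabolicind}(c) — precisely the skeleton you propose. The one visible difference is a bookkeeping one: the paper packages the hard step (endowing the top with a compatible $\mathcal O_m$-action) as an application of Lemma~\ref{center} to the abelian Lie algebra $\h\cap\k$ with $\bar\lambda\neq 0$ supplying the central element acting as a nonzero scalar, whereas you re-derive the content of Lemma~\ref{center} inline: choose $h_0$ with $\bar\lambda(h_0)=1$, set $f\cdot v:=(f\otimes h_0)v$, and verify the hypotheses of Lemma~\ref{genau} via the twisted localization argument. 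Since Lemma~\ref{center} is itself proved in exactly this way (pick a scalar-acting central element, define $x_iv:=(x_i\otimes z)v$, feed into Lemma~\ref{genau}), unwinding the citation as you do adds no new idea; your $\tilde\l$-stability checks for the two auxiliary conditions are correct and match what makes Lemma~\ref{genau} applicable. So: correct, and essentially the same approach — you could shorten your write-up considerably by invoking Lemma~\ref{center} directly, as the paper does.
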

   \begin{proof} Consider $\gamma\in \left( \k\cap\h \right)^*$ which determines the Borel subalgebra $\b$. Then $\gamma$ determines also a parabolic subalgebra $\q$ in $\g$. The $\q$-top of the module $\bar N$ is a simple $({\mathcal W}_m\oplus\h)$-module. Since
     $\bar\lambda\neq 0$, this module is isomorphic to $T(P,V)\otimes\mathbb C_{\bar\lambda}$ by Lemma \ref{center}. A simple computation shows that it is also isomorphic to the top of $\mathcal F(T(P,V),L(\bar\lambda))$. Hence
     the statement follows from Proposition \ref{parabolicind}(c).
   \end{proof}
   \begin{corollary}\label{non-abelian} If $\k$ is not abelian then $N$ is isomorphic to $\mathcal F(T(P,V),S)$ for some cuspidal simple $\mathcal D_m$-module $P$,  a simple
     finite-dimensional $\mathfrak{gl}(m)$-module $V$, and a simple cuspidal $\k$-module $S$. 
   \end{corollary}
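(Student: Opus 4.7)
My plan is to combine Proposition \ref{reduction} and Lemma \ref{tens_reduction} with a commutation property between twisted localization and the functor $\mathcal F$. Since $\k$ is not abelian, the root system $\Delta(\k)$ is nonempty, so one can choose a Borel subalgebra $\b\subset\k$. Applying Proposition \ref{reduction} to $N$ with this choice of $\b$ produces a simple bounded $\g$-module $\bar N$ together with a commuting set $\Gamma=\{\alpha_1,\dots,\alpha_\ell\}\subset -\Delta(\b)$ of $\k$-roots and a weight $\nu$ in the $\C$-span of $\Gamma$ for which $N\simeq D^\nu_\Gamma \bar N$. Lemma \ref{tens_reduction} supplies the desired form of $\bar N$: it is isomorphic to $\mathcal F(T(P,V),L(\bar\lambda))$ for a simple cuspidal $\mathcal D_m$-module $P$, a simple finite-dimensional $\mathfrak{gl}(m)$-module $V$, and the simple $\b$-highest weight $\k$-module $L(\bar\lambda)$.

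The key technical step will be the natural isomorphism
$$D^\nu_\Gamma\,\mathcal F(T(P,V),L(\bar\lambda))\;\simeq\;\mathcal F\bigl(T(P,V),\,D^\nu_\Gamma L(\bar\lambda)\bigr).$$
For this I would fix root vectors $X_{\alpha_i}\in\k_{\alpha_i}$ and regard each $X_{\alpha_i}\otimes 1\in\k\otimes\mathcal O_m\subset\g$ as the ad-nilpotent element of $U(\g)$ used to form $D^\nu_\Gamma$. Two observations are needed. First, directly from the definition $\mathcal F(R,S)=R\otimes_{\mathcal O_m}(\mathcal O_m\otimes S)$ in the example preceding Remark \ref{chinese}, the action of $X_{\alpha_i}\otimes 1$ on $\mathcal F(R,S)$ sends $r\otimes_{\mathcal O_m}(h\otimes s)$ to $r\otimes_{\mathcal O_m}(h\otimes X_{\alpha_i}s)$, so it affects only the $S$-factor. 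Second, in $\g$ one has $[X_{\alpha_i}\otimes 1,\,X\otimes 1]=0$ for all $X\in\mathcal W_m$ and $[X_{\alpha_i}\otimes 1,\,Y\otimes f]=[X_{\alpha_i},Y]\otimes f$ for $Y\in\k$ and $f\in\mathcal O_m$, so $\ad(X_{\alpha_i}\otimes 1)$ is entirely controlled by its $\k$-component. Together these facts allow one to push both the Ore localization at $\langle X_{\alpha_i}\otimes 1\rangle$ and each twist automorphism $\phi_{z_i}$ through the balanced tensor product $\otimes_{\mathcal O_m}$; setting $S:=D^\nu_\Gamma L(\bar\lambda)$ then gives $N\simeq\mathcal F(T(P,V),S)$.

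To conclude, I would apply Lemma \ref{conditions} to the presentation $N\simeq\mathcal F(T(P,V),S)$. Since $N$ satisfies conditions (1) and (2) by hypothesis, the lemma forces $S$ to be a simple cuspidal $\k$-module and simultaneously reconfirms that $P$ is cuspidal and $V$ is finite-dimensional, which yields the corollary. The main obstacle I anticipate is the rigorous justification of the commutation displayed above: the balancing over $\mathcal O_m$ couples the $\mathcal W_m$- and $\k\otimes\mathcal O_m$-actions, and one has to check carefully that inverting $X_{\alpha_i}\otimes 1$ does not disturb this balancing and that $\phi_{z_i}$, defined by an infinite series in $\ad(X_{\alpha_i}\otimes 1)$, acts trivially on elements of the form $r\otimes_{\mathcal O_m}(1\otimes s)$. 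Both assertions follow from the two observations listed above, but must be spelled out explicitly to render the isomorphism canonical.
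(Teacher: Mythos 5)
Your proof proposal takes the same route as the paper: it invokes Proposition~\ref{reduction}, Lemma~\ref{tens_reduction}, and the commutation isomorphism $D^\nu_\Gamma\mathcal F(T(P,V),L(\bar\lambda))\simeq\mathcal F(T(P,V),D^\nu_\Gamma L(\bar\lambda))$, which the paper states without justification and you spell out via the two observations about $\ad(X_{\alpha_i}\otimes 1)$. One small correction worth noting: the role of the hypothesis that $\k$ is non-abelian is not merely to provide a Borel subalgebra (a torus has one too), but to guarantee $\bar\lambda\neq 0$ in Proposition~\ref{reduction} (whose final contradiction requires a simple root of $\b$), which is exactly what Lemma~\ref{tens_reduction} needs when it invokes Lemma~\ref{center}.
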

   \begin{proof} The result follows immediately from Proposition \ref{reduction}, Lemma \ref{tens_reduction}, and the isomorphism of $\g$-modules
     $$D^{-\mu}_\Gamma\mathcal F(T(P,V),L(\bar\lambda))\simeq \mathcal F(T(P,V),D^{-\mu}_\Gamma L(\bar\lambda)).$$
     \end{proof}

        \begin{theorem}\label{mainparabolic} Let $N$ be a simple bounded $\g$-module satisfying (1) and (2). Then we have one of the following two mutually exclusive statements.

          (a) $\mathcal O_m\otimes\k$ acts trivially on $N$ and $N$ is a unique simple submodule of $T(P,V)$ for some simple cuspidal ${\mathcal D}_m$-module $P$ and a simple finite-dimensional $\mathfrak{gl}(m)$-module $V$.
          In this case $\k$ must be abelian.

          (b) $N$ is isomorphic to $\mathcal F(T(P,V),S)$ for some cuspidal simple $\mathcal D_m$-module $P$,  a simple
     finite-dimensional $\mathfrak{gl}(m)$-module $V$ and a simple nontrivial cuspidal $\k$-module $S$. 
\end{theorem}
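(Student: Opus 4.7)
The plan is to split the proof along two dichotomies: whether $\k$ is abelian, and, in the abelian case, whether the action of $\k$ on $N$ is trivial. Suppose first that $\k$ is non-abelian. Then Corollary \ref{non-abelian} applies directly and produces $N \simeq \mathcal F(T(P,V),S)$ with $P$ a simple cuspidal $\mathcal D_m$-module, $V$ a simple finite-dimensional $\mathfrak{gl}(m)$-module, and $S$ a simple cuspidal $\k$-module. Since $\k$ has nonzero roots in this situation, a cuspidal $\k$-module has all root vectors acting injectively and therefore cannot be trivial, so $S$ is nontrivial and $N$ lies in case (b).

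Suppose next that $\k$ is abelian but that some element $z \in \k$ acts nontrivially on $N$. The plan is to apply Lemma \ref{center}, which upgrades $N$ to a $(\g,\mathcal O_m)$-module. Then Lemma \ref{geometric} gives a decomposition $N \simeq \mathcal F(R,S)$ for some simple $({\mathcal W}_m,\mathcal O_m)$-module $R$ and some simple cuspidal $\k$-module $S$, and Lemma \ref{conditions} together with Remark \ref{chinese} refine $R$ to a tensor module $T(P,V)$ with $P$ cuspidal and $V$ finite-dimensional. Since $z$ acts nontrivially on $N$, hence nontrivially on $S$, the module $S$ is nontrivial; this again lands $N$ in case (b).

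The remaining subcase is $\k$ abelian with $\k N = 0$. Here Lemma \ref{abelian} yields $(\mathcal O_m \otimes \k) N = 0$, so the $\g$-action on $N$ descends along the quotient $\g \to \g/(\mathcal O_m \otimes \k) \simeq {\mathcal W}_m$, making $N$ a simple bounded ${\mathcal W}_m$-module still satisfying (1) and (2). Combining Theorem \ref{thm-class-bnd} with Proposition \ref{tensor_sub} identifies $N$ as the unique simple submodule of $T(P,V)$ for some simple weight $\mathcal D_m$-module $P$ and simple finite-dimensional $\mathfrak{gl}(m)$-module $V$. Cuspidality of $P$ is then forced by condition (1): the support of $T(P,V)$ must contain the full $\mathbb Z^m$-coset $\lambda+\mathbb Z\Delta({\mathcal W}_m)$ while $\supp V$ is finite, so all $x_i$ and $\partial_i$ must act injectively on $P$. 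This is case (a). The mutual exclusivity is immediate: in (b) the module $S$ is nontrivial so $\k N \neq 0$, whereas in (a) the equality $\k N = 0$ together with Corollary \ref{non-abelian} forces $\k$ to be abelian.

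Because almost all of the technical work has been done in advance — in particular the twisted localization reduction of Proposition \ref{reduction} and the nilpotency argument of Lemma \ref{abelian} that propagates triviality of $\k$ to triviality of $\mathcal O_m \otimes \k$ — the theorem itself is a clean assembly of these three subcases. The one point requiring care is to confirm that the trichotomy (non-abelian $\k$; abelian $\k$ with nontrivial action; abelian $\k$ with trivial action) exhausts all possibilities and that the outputs of the three subcases fit exactly into (a) or (b) as stated; I expect this to be the main, though modest, obstacle in writing out the argument cleanly.
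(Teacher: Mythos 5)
Your proposal follows essentially the same trichotomy and the same sequence of lemmas as the paper's proof: non-abelian $\k$ via Corollary \ref{non-abelian}, abelian $\k$ with nontrivial action via Lemma \ref{center} (plus Lemmas \ref{geometric} and \ref{conditions}), and abelian $\k$ with trivial action via Lemma \ref{abelian} followed by the classification of bounded ${\mathcal W}_m$-modules. Your additional remarks — that a cuspidal $\k$-module is automatically nontrivial when $\k$ has nonzero roots, that $P$ must be cuspidal by condition (1), and the mutual-exclusivity argument — correctly fill in details the paper leaves implicit, so the argument is sound and matches the intended route.
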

\begin{proof} If $\k$ is not abelian the statement follows from  Corollary \ref{non-abelian}. If $\k$ is abelian and  $\k$ acts nontrivially on $N$, the statement follows from Lemma \ref{center}.
          If $\k$ acts trivially on $N$, then by Lemma \ref{abelian}, $(\mathcal O_m\otimes\k) N=0$. Then $N$ is a simple bounded ${\mathcal W}_m$-module and the statement is a consequence of Theorem 1.1 in \cite{XL}.
    \end{proof}

    \section{Back to tensor modules via parabolic induction}
\subsection{The case of infinite-dimensional $\g$}
        We retain the notation of Section 4 and  assume again that $M$ is a simple weight ${\mathcal W}_n$-module that is also the  unique simple quotient of the parabolically induced module $U({\mathcal W}_n)\otimes_{U(\p)}N$, where  $N$ is a simple bounded $\g$-module satisfying (1) and (2). We we will use the properties of $N$ listed  in
        Theorem \ref{mainparabolic}.

     Recall that $p$ and $m$ are fixed and defined in \S \ref{sec-par-ind}.   Let $\p'=\p\cap\mathfrak{gl}(n)$. The Levi subalgebra of $\p'$ is isomorphic to $\k\oplus\mathfrak{gl}(m)$.
        Consider a $\g$-module $\mathcal F(T(P,V),S)$ where $V$ is a finite-dimensional $\mathfrak{gl}(m)$-module, $P$ is a simple cuspidal $\mathcal D_m$-module and $S$ be a simple cuspidal $\k$-module.
        Note that $S$ might be a trivial $\k$-module in the case when $\k$ is abelian.
        Let $U$ be the one-dimensional $\k$-module of weight $\sum_{i=1}^p\varepsilon_i$ 
        and $S^U=S\otimes U$. Finally, let
        $\hat S$ be the unique simple quotient of $U(\mathfrak{gl}(n))\otimes_{U(\p')}(S^U\otimes V)$. Using the isomorphism $$\mathcal D_n\simeq \mathcal D_p\otimes \mathcal D_m \otimes \mathcal D_{n-p-m},$$
        define a $\mathcal D_n$-module $\tilde P$ by
        $$\tilde P=\mathbb C[x_1,\dots,x_p]^F\otimes P\otimes\mathbb C[x_{p+m+1},\dots,x_n],$$
        (recall that $X^F$ is the full Fourier transform of $X$).

        \begin{lemma}\label{backtotensor} The $\p$-top of $T(\tilde P, \hat S)$ is isomorphic to $\mathcal F(T(P,V),S)$.
          \end{lemma}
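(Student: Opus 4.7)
The plan is to build a $\g$-equivariant linear map
$$\Psi : \mathcal F(T(P,V), S) \longrightarrow T(\tilde P, \hat S)$$
whose image equals the $\p$-top. Since $\hat S$ is the unique simple quotient of $U(\mathfrak{gl}(n))\otimes_{U(\p')}(S^U\otimes V)$, Proposition \ref{parabolicind}(b) identifies $S^U\otimes V$ with the $(\n\cap\mathfrak{gl}(n))$-invariants of $\hat S$; call this inclusion $\iota$. Writing $\mathcal F(T(P,V),S)\cong P\otimes V\otimes S$ as a vector space, set
$$\Psi(g\otimes v\otimes s) \;=\; f(0)\otimes g\otimes 1\otimes \iota(s\otimes v),$$
where $f(0) \in \mathbb{C}[x_1,\dots,x_p]^F$ is the Fourier vacuum (killed by every $x_k$, $k\leq p$, and of weight $-\sum_{k\leq p}\varepsilon_k$, as one reads off from $\partial_i f(\mu) = -f(\mu+\varepsilon_i)$, $x_if(\mu) = \mu_i f(\mu-\varepsilon_i)$), and $1 \in \mathbb{C}[x_{p+m+1},\dots,x_n]$ is the polynomial identity (killed by every $\partial_l$, $l>p+m$). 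The weight shift from the $U$-factor in $\hat S$ cancels the Fourier weight of $f(0)$, so $\Psi$ is weight-preserving.

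To show that $\Psi$ lands in the $\n$-invariants, let $y = x^\alpha\partial_j \in \n$ (so $\sum_k a_k\alpha_k > a_j$) and expand
$$y\cdot(\phi\otimes w) = (x^\alpha\partial_j\phi)\otimes w + \sum_i\partial_i(x^\alpha)\phi\otimes E_{ij}w$$
on $\phi = f(0)\otimes g\otimes 1$ and $w = \iota(s\otimes v)$. The first summand vanishes in three cases: if $j>p+m$ then $\partial_j$ kills the third-factor vacuum; if $j\in[p+1,p+m]$ the $\n$-condition forces some $\alpha_k\geq 1$ with $k\leq p$, and $x_k\cdot f(0)=0$ annihilates the first factor; if $j\leq p$, the inequality $\sum_k a_k\alpha_k > a_j$ rules out $\alpha|_{[1,p]}\in\{0,\varepsilon_j\}$, so $x^{\alpha|_{[1,p]}}f(\varepsilon_j)=0$ in the Fourier module. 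For each nonvanishing contribution to the second sum, a short case analysis on whether $i$ lies in $[1,p]$, $[p+1,p+m]$, or $(p+m,n]$ shows that $a_i>a_j$, so $E_{ij}\in\n\cap\mathfrak{gl}(n)$ and $E_{ij}\iota(s\otimes v)=0$. The $\g$-equivariance of $\Psi$ is then a direct parallel computation on generators: for $X = x^\beta\partial_j \in {\mathcal W}_m$ and for $x^\beta\otimes E_{ab} \in \k\otimes\mathcal{O}_m$ (realized in $U({\mathcal W}_n)$ as $x^{\beta}x_a\partial_b$), the tensor-module formula reproduces exactly the action on $g\otimes v\otimes s$ induced by the $({\mathcal W}_m,\mathcal{O}_m)$-structure on $T(P,V)$ and the $(\g,\mathcal{O}_m)$-action on $\tilde S=\mathcal{O}_m\otimes S$.

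Finally, for the identification with the $\p$-top, recall that a weight $\lambda$ lies in $T(\tilde P,\hat S)^{\rm top}$ precisely when $\lambda+\alpha\notin\supp T(\tilde P,\hat S)$ for every $\alpha\in\Delta(\n)$. I would carry out a weight-support comparison using the decomposition $\supp T(\tilde P,\hat S)=\supp\tilde P+\supp\hat S$: the first-factor support has each coordinate $i\leq p$ bounded above by $-1$, attained only by $f(0)$; the third-factor support has each coordinate $l>p+m$ bounded below by $0$, attained only by $1$; and $\supp\hat S$ is controlled by the $\p'$-top $S^U\otimes V$. Testing the extremality condition with $\alpha=-\varepsilon_l$ for $l>p+m$ forces the third factor to be $1$; with $\alpha=\varepsilon_i-\varepsilon_j$ for $i\leq p$, $j\in[p+1,p+m]$ it forces the first factor to be $f(0)$ (the maximality at coordinate $i$ being attained only when $\hat S$-component sits at the top $S^U\otimes V$); the remaining root directions then force the $\hat S$-component into $\hat S^{\n\cap\mathfrak{gl}(n)}=S^U\otimes V$. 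Weight-multiplicity comparison on each side then yields that $\Psi$ is an isomorphism onto $T(\tilde P,\hat S)^{\rm top}$. The main obstacle is precisely this last step: ruling out ``auxiliary'' decompositions of $\lambda+\alpha$, in particular those involving combinations of $\mathfrak{gl}(n)$-lowering operators in $\hat S$ that would raise certain coordinates, requires careful exploitation of the $U$-shift built into $\hat S$ together with the cuspidality of $P$ and the fact that the top of $\hat S$ is bounded by $S^U\otimes V$.
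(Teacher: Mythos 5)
Your proposal follows the same underlying strategy as the paper's proof---a support-and-multiplicity comparison between $T(\tilde P,\hat S)$ and $\mathcal F(T(P,V),S)$ on the $\p$-top---but you are considerably more explicit. The paper merely records the inclusions $\supp\mathcal F(T(P,V),S)\subset\supp T(\tilde P,\hat S)\subset\supp\mathcal F(T(P,V),S)+\supp U(\n^-)$ together with the multiplicity match on the top, leaving the witnessing $\g$-equivariant map implicit; you build that map $\Psi$ concretely (the Fourier vacuum $f(0)$ has weight $-\sum_{k\leq p}\varepsilon_k$ as you compute, which the $U$-shift in $\hat S$ exactly cancels) and then check $\n$-invariance by a case analysis on which index block contains $i$ and $j$. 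Your case analysis is correct: the relations $x_kf(0)=0$ for $k\leq p$ and the fact that the third factor $1$ is killed by $\partial_l$ for $l>p+m$ force $\alpha|_{[1,p]}\in\{0,\varepsilon_i\}$ on any nonvanishing contribution, after which the $\n$-condition $\sum a_k\alpha_k>a_j$ does give $a_i>a_j$, hence $E_{ij}\in\n\cap\mathfrak{gl}(n)$ kills $\iota(s\otimes v)$. The only place you leave a gap---the final ``auxiliary decompositions'' worry---is handled by observing that the linear functional $\gamma$ is constant on $\supp\mathcal F(T(P,V),S)$, strictly negative on nonzero weights of $\supp U(\n^-)$, and strictly positive on $\Delta(\n)$; a weight at the maximal $\gamma$-level therefore has no raising direction inside the support, and for a weight $\lambda=\mu+\rho$ at a non-maximal level one finds a raising direction either in $\tilde P$ (if its $\mu$-component is not at its top, one of $\pm\varepsilon_i$ works) or in $\hat S$ (if its $\rho$-component lies outside $S^U\otimes V$, simplicity of $\hat S$ and $\hat S^{\n'}=S^U\otimes V$ from Proposition~\ref{parabolicind}(b) guarantee some $\beta\in\Delta(\n')$ raises it). So your argument can be completed and is in substance the same as the paper's, with the merit of supplying the isomorphism the paper takes for granted.
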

          
          \begin{proof}
          The statement follows by comparing the supports of the two modules. Let $\p = \g \oplus \n $. 
          \begin{eqnarray*}
          \supp \tilde P = \sum_{i=1}^p {\mathbb Z}_{<0} \varepsilon_i + \supp P +  \sum_{i=p+m+1}^n {\mathbb Z}_{\geq 0} \varepsilon_i, \\ \supp \hat S \subset  \supp S + \sum_{i=1}^p \varepsilon_i + \supp V -\supp U(\n'),
          \end{eqnarray*}
           where $\n'=\n\cap\mathfrak{gl}(n)$.
           Then we have that         
        $$     \supp \mathcal F(T(P,V),S)\subset \supp T(\tilde P,\hat S)\subset \supp \mathcal F(T(P,V),S)+\supp U(\n^-),$$
          where $\n^-$ is the nilradical of the opposite parabolic.  Moreover, the multiplicity of any $\mu\in  \supp \mathcal F(T(P,V),S)$ is the same as its multiplicity in $T(\tilde P, \hat S)$.
        \end{proof}
        \begin{lemma}\label{non-degeneratecase} Let $N= \mathcal F(T(P,V),S)$ be a simple $\g$-module. Then the unique simple quotient $M$ of $U({\mathcal W}_n)\otimes_{U(\p)}N$ is isomorphic to unique simple submodule of $T(\tilde P, \hat S)$. 
     \end{lemma}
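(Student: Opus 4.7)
The plan is to show that $T(\tilde P,\hat S)$ is itself a simple ${\mathcal W}_n$-module, so that its unique simple submodule is the whole module, and then to identify it with $M$ via Frobenius reciprocity. This reduces the lemma essentially to Proposition \ref{prop-tensor-simple}(i) combined with Lemma \ref{backtotensor}.

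First I would verify that $\hat S$ is a simple weight $\mathfrak{gl}(n)$-module that is \emph{infinite-dimensional}. Indeed, we are in case (b) of Theorem \ref{mainparabolic}, so $S$ is a simple nontrivial cuspidal $\k$-module. As a cuspidal module over a nonzero reductive Lie algebra $\k$, the module $S$ (and hence $S^U = S\otimes U$) is infinite-dimensional. Therefore $S^U\otimes V$ is an infinite-dimensional simple weight $\p'$-module, and its parabolically induced $\mathfrak{gl}(n)$-module has an infinite-dimensional simple quotient $\hat S$. Since every fundamental representation $\bigwedge^k{\mathbb C}^n$ of $\mathfrak{gl}(n)$ is finite-dimensional, $\hat S$ is not isomorphic to any fundamental representation.

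Next, $\tilde P=\mathbb C[x_1,\dots,x_p]^F\otimes P\otimes\mathbb C[x_{p+m+1},\dots,x_n]$ is a simple weight $\mathcal D_n$-module, being an external tensor product of simple weight $\mathcal D$-modules. Applying Proposition \ref{prop-tensor-simple}(i) with this $\tilde P$ and $\hat S$ (which is simple and not a fundamental representation) yields that $T(\tilde P,\hat S)$ is a simple ${\mathcal W}_n$-module. Consequently, the unique simple submodule $L$ of $T(\tilde P,\hat S)$ coincides with $T(\tilde P,\hat S)$ itself.

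It remains to identify $T(\tilde P,\hat S)$ with $M$. By Lemma \ref{backtotensor}, the $\p$-top of $T(\tilde P,\hat S)$ is isomorphic to $N=\mathcal F(T(P,V),S)$, and since this top sits at the maximal $\gamma$-level it is annihilated by $\n$, hence is a $\p$-submodule of $T(\tilde P,\hat S)$. Frobenius reciprocity converts the inclusion $N\hookrightarrow T(\tilde P,\hat S)$ into a nonzero ${\mathcal W}_n$-homomorphism $\phi:U({\mathcal W}_n)\otimes_{U(\p)}N\to T(\tilde P,\hat S)$. Because $T(\tilde P,\hat S)$ is simple, $\phi$ is surjective, realizing $T(\tilde P,\hat S)$ as a simple quotient of $U({\mathcal W}_n)\otimes_{U(\p)}N$; but $M$ is the unique such simple quotient by Proposition \ref{parabolicind}(a), so $T(\tilde P,\hat S)\simeq M$ and hence $L\simeq M$.

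The main obstacle is the simplicity of $T(\tilde P,\hat S)$, which would fail for fundamental $\hat S$ (and indeed this failure is exactly what forces the separate analysis of the de~Rham-type submodules handled in Lemma \ref{degeneratecase}). Once one observes that the nontriviality of the cuspidal $\k$-module $S$ forces $\hat S$ to be infinite-dimensional and thus non-fundamental, the rest is a direct appeal to Proposition \ref{prop-tensor-simple}(i) and the standard parabolic-induction machinery.
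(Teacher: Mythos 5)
Your reduction of the lemma to the simplicity of $T(\tilde P,\hat S)$ rests on the claim that $S$ is infinite-dimensional (being a nontrivial cuspidal $\k$-module), so that $\hat S$ is infinite-dimensional and hence not a fundamental representation. This claim fails in general, for two reasons. First, the hypothesis of the lemma---that $\mathcal F(T(P,V),S)$ is simple---is not restricted to case (b) of Theorem~\ref{mainparabolic}: as the text just before Lemma~\ref{backtotensor} notes, $S$ may be trivial when $\k$ is abelian, and then $\mathcal F(T(P,V),S)=T(P,V)$ is still simple whenever $V$ is not fundamental. Second, even within case (b), $\k$ can be abelian (a torus inside $\mathfrak{gl}(p)\oplus\mathfrak{gl}(n-p-m)$), in which case every weight $\k$-module is vacuously cuspidal and a nontrivial simple cuspidal $\k$-module is merely a one-dimensional nonzero character of the torus, not an infinite-dimensional module. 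In these situations the highest weight of $\hat S$ relative to the Borel contained in $\p'$, namely the weight of $S^U\otimes V$, can perfectly well be $0$ or a fundamental weight of $\mathfrak{gl}(n)$, so $\hat S$ may be finite-dimensional and even fundamental and $T(\tilde P,\hat S)$ need not be simple. The reduction to Proposition~\ref{prop-tensor-simple}(i) therefore does not go through.

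The paper's proof deliberately avoids any simplicity claim about the target. By Lemma~\ref{backtotensor} one has a $\p$-module isomorphism $N\simeq T(\tilde P,\hat S)^{\rm top}$, which by Frobenius reciprocity yields a nonzero homomorphism $\phi\colon U(\mathcal W_n)\otimes_{U(\p)}N\to T(\tilde P,\hat S)$, exactly as in the second half of your write-up. The image of $\phi$ is $U(\mathcal W_n)\cdot N$, and every nonzero ${\mathcal W}_n$-submodule of $T(\tilde P,\hat S)$ has nonzero $\p$-top sitting inside the simple $\g$-module $N$, so it contains $N$ and therefore contains $\operatorname{Im}\phi$. Thus $\operatorname{Im}\phi$ is exactly the unique simple submodule guaranteed by Proposition~\ref{tensor_sub}, and being a simple quotient of $U(\mathcal W_n)\otimes_{U(\p)}N$ it is isomorphic to $M$. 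You should discard the ``$T(\tilde P,\hat S)$ is simple'' step and argue via the unique simple submodule instead.
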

     \begin{proof}  The isomorphism  of $\p$-modules $N\to T(\tilde P,\hat V)^{\rm top}$ induces a nonzero homomorphism of ${\mathcal W}_m$-modules $U({\mathcal W}_n)\otimes_{U(\p)}N\to T(\tilde P,\hat V)$. The image of this homomorphism is simple since
       $T(\tilde P,\hat V)$ has a unique simple submodule. Thus, this submodule is isomorphic to $M$.
     \end{proof}
     Now assume that  $\mathcal F(T(P,V),S)$ is not simple. This is only possible if $\k$ is abelian, $S$ is trivial, and $V=\bigwedge\nolimits^{k} {\mathbb C}^m$. 
     
    \begin{lemma}\label{degeneratecase} Assume that $N$ is the simple submodule $T(P, \bigwedge\nolimits^{k} {\mathbb C}^m)$ for some
     $k=0,\dots,m-1$. Then the unique simple quotient $M$ of $U({\mathcal W}_n)\otimes_{U(\p)}N$ is isomorphic to the unique simple submodule of $T(\tilde P, \bigwedge\nolimits^{p+k} {\mathbb C}^n)$. 
     \end{lemma}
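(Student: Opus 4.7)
The plan is to parallel Lemma \ref{non-degeneratecase}, targeting $T(\tilde P, \bigwedge\nolimits^{p+k}\mathbb{C}^n)$, but replacing the final identification (which there invoked simplicity of the target) by Proposition \ref{parabolicind}(c). The hypothesis places us in the degenerate situation of Theorem \ref{mainparabolic}, where $\k$ is abelian and the auxiliary cuspidal $\k$-module is trivial. So I apply Lemma \ref{backtotensor} with $V=\bigwedge\nolimits^k\mathbb{C}^m$ and $S=\mathbb{C}$: the $\k\oplus\mathfrak{gl}(m)$-module $S^U\otimes V$ has $\p'$-highest weight $\sum_{i=1}^{p+k}\varepsilon_i$, which is the $\mathfrak{gl}(n)$-highest weight of $\bigwedge\nolimits^{p+k}\mathbb{C}^n$, so $\hat S=\bigwedge\nolimits^{p+k}\mathbb{C}^n$. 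The lemma then identifies the $\p$-top of $T(\tilde P,\bigwedge\nolimits^{p+k}\mathbb{C}^n)$ with $\mathcal F(T(P,\bigwedge\nolimits^k\mathbb{C}^m),\mathbb{C})=T(P,\bigwedge\nolimits^k\mathbb{C}^m)$, in which $N$ is the unique simple submodule by Proposition \ref{tensor_sub}.

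Let $L$ be the unique simple submodule of $T(\tilde P,\bigwedge\nolimits^{p+k}\mathbb{C}^n)$, guaranteed by Proposition \ref{tensor_sub}. The technical heart of the argument is to show $L^\n\neq 0$. By the support inclusion in the proof of Lemma \ref{backtotensor},
$$\supp T(\tilde P,\bigwedge\nolimits^{p+k}\mathbb{C}^n)\subseteq \supp T(P,\bigwedge\nolimits^k\mathbb{C}^m)+\supp U(\n^-).$$
Since $\gamma$ vanishes on $\sum_{i=p+1}^{p+m}\mathbb{C}\varepsilon_i$, and $(\gamma,\alpha)<0$ for every $\alpha\in\Delta(\n^-)$, the functional $(\gamma,\cdot)$ is bounded above by $0$ on $\supp L$. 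Choosing $\gamma$ with rational coordinates (which may be done without changing $\p$), this functional takes values in a discrete subset of $\mathbb{R}$, so its supremum on $\supp L$ is attained at some $\lambda_0$. Any nonzero vector in $L^{\lambda_0}$ is then annihilated by $\n$, since $\lambda_0+\alpha\notin\supp L$ for all $\alpha\in\Delta(\n)$ by maximality. Proposition \ref{parabolicind}(b) now forces $L^\n$ to be a simple $\g$-module, and being contained in $T(\tilde P,\bigwedge\nolimits^{p+k}\mathbb{C}^n)^\n=T(P,\bigwedge\nolimits^k\mathbb{C}^m)$, the uniqueness of its simple submodule gives $L^\n=N$.

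To finish, $M^\n=N$ as well: the canonical surjection $U(\mathcal W_n)\otimes_{U(\p)}N\twoheadrightarrow M$ sends $1\otimes N$ injectively into $M^\n$ by simplicity of $N$, and $M^\n$ is simple by Proposition \ref{parabolicind}(b), hence equal to $N$. Proposition \ref{parabolicind}(c) then delivers $M\simeq L$, the unique simple submodule of $T(\tilde P,\bigwedge\nolimits^{p+k}\mathbb{C}^n)$. The principal obstacle is the support argument establishing $L^\n\neq 0$; in the non-degenerate case treated in Lemma \ref{non-degeneratecase} this was automatic from simplicity of the ambient tensor module, while here it requires the $\gamma$-weight analysis above.
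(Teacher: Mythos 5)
Your proof is correct and follows essentially the route the paper intends: the paper's own proof of Lemma \ref{degeneratecase} consists of one line deferring to ``the same reasoning as in the proof of the previous lemma,'' and the proof of Lemma \ref{non-degeneratecase} is itself terse (``the image of this homomorphism is simple since $T(\tilde P,\hat V)$ has a unique simple submodule''), which as stated would not hold for an arbitrary submodule of a module possessing a unique simple submodule. What you supply is precisely the missing justification: identifying $\hat S=\bigwedge^{p+k}\mathbb{C}^n$ via the highest weight of $S^U\otimes V$, establishing $L^\n\neq 0$ by a support argument, then matching $L^\n$ and $M^\n$ inside the $\p$-top $T(P,\bigwedge^k\mathbb{C}^m)$ via Proposition \ref{tensor_sub} and Proposition \ref{parabolicind}(b), and finally invoking Proposition \ref{parabolicind}(c). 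Two small remarks. First, $(\gamma,\cdot)$ is constant on $\supp T(P,\bigwedge^k\mathbb{C}^m)$ rather than identically zero (weights have nontrivial components outside the indices $p+1,\dots,p+m$), so $(\gamma,\cdot)$ is bounded above on $\supp L$ by that constant, not by $0$; this is cosmetic and doesn't affect the conclusion that the supremum is attained. Second, you correctly flag that attainment needs an argument, since $\Delta(\n)$ is infinite; choosing $\gamma$ rational handles it, though one should note that replacing $\gamma$ by a rational functional without changing $\p$ is not automatic for the Lie algebra $\mathcal{W}_n$ (unlike the finite-dimensional case), and is implicitly part of the normalization already made in Theorem \ref{PS}(b) and \S\ref{sec-par-ind}. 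With that understood, your argument is a valid and more detailed version of the paper's.
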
    
     \begin{proof} We consider the monomorphism of $\p$-modules $N\to T(\tilde P, \bigwedge\nolimits^{p+k} {\mathbb C}^n)^{\rm top}$, and the induced map $$U({\mathcal W}_n)\otimes_{U(\p)}N\to T(\tilde P, \bigwedge\nolimits^{p+k} {\mathbb C}^n).$$
       To complete the proof, we use the same reasoning as the one in the proof of the previous lemma.
       \end{proof}
   
       \subsection{The case of finite-dimensional $\g$} In this case we have $m=0$ and $\g$ is a Lie  subalgebra of $\mathfrak{gl}(n)$.  Using arguments similar to the ones used in the previous subsection, one can show that
       the unique simple quotient of $U({\mathcal W}_n)\otimes_{U(\p)}S$ is isomorphic to the unique simple submodule of $T(\tilde P,\hat S)$.
       
       \subsection {The case $\g={\mathcal W}_m$} This case follows from Theorem \ref{thm-class-bnd}.


\begin{thebibliography}{99}

\bibitem{BF} Y. Billig, V. Futorny, Classification of simple ${\mathcal W}_n$-modules with finite-dimensional weight spaces,  {\it J. reine angew. Math.} {\bf 720} (2016), 199--216. \\

\bibitem{CG} A. Cavaness, D. Grantcharov, Bounded weight modules of the Lie algebra of vector fields on $\mathbb C^2$, \emph{J. Algebra Appl.} {\bf 16} (2017), Article no. 1750236.\\





\bibitem{DMP} I. Dimitrov, O. Mathieu, I. Penkov, On the structure of weight
modules,  {\it Trans. Amer. Math. Soc.} {\bf 352} (2000),
2857--2869.\\


\bibitem{FF} B. Feigin, D. Fuks, Homology of the Lie algebra of vector fields on the line, {\it Funct. Anal. Appl.} {\bf 14} (1980), 201--212.\\


\bibitem{Fe} S. Fernando, Lie algebra modules with finite-dimensional weight
spaces I, {\it Trans. Amer. Math. Soc.}  {\bf 322} (1990),
757--781.\\

\bibitem{Fuk} D. Fuks, Cohomology of infinite-dimensional Lie algebras, Contemporary Soviet Mathematics. Consultants Bureau, New York, 1986.\\




\bibitem{Fu} V. Futorny, The weight representations of semisimple finite-dimensional Lie algebras, Ph. D. Thesis, Kiev University, 1987.\\

\bibitem{GS1}  D. Grantcharov, V. Serganova, Category of
$\mathfrak{sp}(2n)$-modules with bounded weight multiplicities,
{\it Mosc. Math. J.} {\bf 6} (2006), 119--134.\\

\bibitem{GS2}  D. Grantcharov, V. Serganova, Cuspidal representations of $\mathfrak{sl}(n+1) $,
{\it Adv. in Math.} {\bf 224} (2010), 1517--1547.\\

 \bibitem{L} T. A. Larsson, Conformal fields: A class of representations of $Vect(N)$, {\it Int. J. Mod. Phys. A} {\bf 7} (1992), 6493--6508. \\ 
 
\bibitem{LLZ} G. Liu,  R. Lu, and K. Zhao, Irreducible Witt modules from Weyl modules and $\mathfrak{gl}_n$-modules, \emph{J. Algebra} {\bf 511} (2018), 164--181.\\

\bibitem{M-Vir} O. Mathieu, Classification of Harish-Chandra modules over the Virasoro algebra, {\it Invent. Math.} {\bf 107} (1992), 225--234.\\


\bibitem{M} O. Mathieu, Classification of irreducible weight modules, {\it
Ann. Inst. Fourier} {\bf 50} (2000), 537--592.\\


\bibitem{PS} I. Penkov, V. Serganova,  Weight representations of the polynomial Cartan type Lie algebras $W\sb n$ and $\overline S\sb n$, {\it Math. Res. Lett.} {\bf  6}  (1999), 397--416.\\



\bibitem{Rud1}  A. Rudakov, Irreducible representations of infinite-dimensional Lie algebras of Cartan type, {\it Izv. Akad.
Nauk SSSR Ser. Mat.} {\bf 38} (1974), 835--866 (Russian); English version: {\it USSR-Izv.} {\bf 8} (1974), 836--866.\\

\bibitem{Rud2}  A. N. Rudakov, Irreducible representations of infinite-dimensional Lie algebras of types $S$ and $H$, {\it Izv. Akad. Nauk SSSR Ser. Mat.} {\bf 39} (1975), 496--511 (Russian); English version: {\it USSR-Izv.} {\bf 9} (1975), 465--480.\\



\bibitem{Sh}  G. Shen, Graded modules of graded Lie algebras of Cartan type. I. Mixed
products of modules, \emph{Sci. Sinica Ser., A} {\bf 29} (1986), 570--581.\\

\bibitem{XL} Y. Xue, R. Lu,  Classification of simple bounded weight modules of the Lie algebra of vector fields on $\C^n$, arXiv:2001.04204.\\


\end{thebibliography}
    \end{document}